\documentclass{amsart}
\usepackage [latin1]{inputenc}
\usepackage{hyperref}
\def\romega{{r_{\phantom{a}\!\!\!\!_\Omega}}}
\def\rgamma{{r_{\phantom{a}\!\!\!\!_\Gamma}}}
\newcommand\R{\mathbb R}
\newcommand\N{\mathbb N}
\newcommand\Z{\mathbb Z}
\newcommand\loc{{\text{\upshape loc}}}
\newcommand\negquad{\!\!\!\!}

\newcommand{\vertiii}[1]{{\left\vert\kern-0.25ex\left\vert\kern-0.25ex\left\vert #1
    \right\vert\kern-0.25ex\right\vert\kern-0.25ex\right\vert}}
\DeclareMathOperator\Tr{Tr}
\numberwithin{equation}{section}
\newtheorem{thm}{Theorem}
\numberwithin{thm}{section}

\numberwithin{cor}{section}
\newtheorem{lem}{Lemma}
\numberwithin{lem}{section}
\newtheorem{prop}{Proposition}
\numberwithin{prop}{section}
\theoremstyle{definition}
\newtheorem{definition}{Definition}

\numberwithin{definition}{section}
\theoremstyle{remark}
\newtheorem{rem}{Remark}
\numberwithin{rem}{section}

\begin{document}
\title[Nontrivial solutions for a class of semilinear...]
{Nontrivial solutions for a class of semilinear elliptic equations with a nonlinear Goldstein--Wentzell boundary condition}
\author{Enzo Vitillaro}
\address[E.~Vitillaro]
       {Dipartimento di Matematica e Informatica, Universit\`a di Perugia\\
       Via Vanvitelli,1 06123 Perugia ITALY}
\email{enzo.vitillaro@unipg.it}

\subjclass{35D30, 35J05,35J20,25J25,35J61,35J67}

\keywords{semilinear elliptic equations, Laplace--Beltrami operator, existence and multiplicity for nontrivial solutions, Wentzell boundary conditions, Ventcel boundary conditions, Mountain Pass Theorem}


\thanks{
The research was supported by Universit\`a degli Studi di Perugia''.}

\begin{abstract} The paper deals with the existence and multiplicity of nontrivial solutions for the doubly elliptic problem
$$\begin{cases} -\Delta u=f(u) \qquad &\text{in
$\Omega$,}\\
\phantom{-}u=0 &\text{on $\Gamma_0$,}\\
-\Delta_\Gamma u +\partial_\nu u =g(u)\qquad
&\text{on
$\Gamma_1$,}
\end{cases}
$$
where $\Omega$ is a bounded open domain of $\R^N$ ($N\ge
2$) with $C^1$ boundary  $\Gamma=\partial\Omega$, with $\Gamma=\Gamma_0\cup\Gamma_1$, $\Gamma_0\cap\Gamma_1=\emptyset$,
$\Gamma_1$ being nonempty and relatively open on $\Gamma$,  $\mathcal{H}^{N-1}(\Gamma_0)>0$. The terms $f$ and $g$ are subcritical with respect to Sobolev embeddings, respectively  in $\Omega$ and  on $\partial\Omega$.

We prove that, under suitable assumptions, the problem admits nontrivial solutions at the  depth of the potential well  energy level, which is the minimum energy level for nontrivial solutions. We also  prove that the problem has infinitely many solutions at higher energy levels.
\end{abstract}

\maketitle
\section{Introduction and main results}
\subsection{Presentation of the problem and literature overview}\label{intro}
We deal with the doubly elliptic problem
\begin{equation}\label{1.1}
\begin{cases} -\Delta u=f(u) \qquad &\text{in
$\Omega$,}\\
\phantom{-}u=0 &\text{on $\Gamma_0$,}\\
-\Delta_\Gamma u +\partial_\nu u =g(u)\qquad
&\text{on
$\Gamma_1$,}
\end{cases}
\end{equation}
where $\Omega$ is a bounded open domain of $\R^N$ ($N\ge
2$) with $C^1$ boundary (see \cite{grisvard}). We denote  $\Gamma=\partial\Omega$ and we assume
$\Gamma=\Gamma_0\cup\Gamma_1$, $\Gamma_0\cap\Gamma_1=\emptyset$,
$\Gamma_1$ being nonempty and relatively open on $\Gamma$ (or equivalently $\overline{\Gamma_0}=\Gamma_0$).
Denoting by
$\mathcal{H}^{N-1}$ the  Hausdorff  measure, we assume that   $\mathcal{H}^{N-1}(\overline{\Gamma}_0\cap\overline{\Gamma}_1)=0$ and $\mathcal{H}^{N-1}(\Gamma_0)>0$.
These
properties of $\Omega$, $\Gamma_0$ and $\Gamma_1$ will be assumed,
without further comments, throughout the paper.

Moreover, in \eqref{1.1}, we respectively denote by $\Delta$ and $\Delta_\Gamma$ the Laplace and the Laplace--Beltrami operators, while $\nu$ stands for the outward unit normal to $\Omega$.

The terms $f$ and $g$ respectively represent an internal and a boundary sources. The specific assumptions concerning them will be presented in the sequel.

Semilinear elliptic equations with nonlinear Neumann boundary conditions, such as problem \eqref{1.1}, without the Laplace--Beltrami term, have a wide literature. Without any aim of completeness, here we refer to \cite{adyadava91,atkinson,BenAyed,Bonder,ChipotShafrir,ChlebikFilaReichel,
delPino,inkmann,Pierotti,quitreich,Terracini,tsungfang}.

Boundary conditions like the one in \eqref{1.1}, but without the nonlinear sources $f$ and $g$, are known in the literature as generalized Wentzell (sometimes spelled as Vencel) or Goldstein--Wentzell boundary conditions. They have appeared in several papers dealing with linear evolutions problems. See for example \cite{CFGGGER,grecoviglialoro,quarteroni,nicaisemazzucato,Rom2014DCDS,vazvitHLB} and \cite{lionstata}, to which we refer for  the physical motivations of this kind of problems.

The same boundary condition also appears in the context of coupled bulk--surface elliptic systems. See for example \cite{Dambrine,DuWangXia,ElliottRanner,KnopfLiu,XiaWang}, and the recent papers \cite{Eigenvalues,EigenvaluesCorrigendum} by the author. All the papers above deal with linear eigenvalues problems related to the Wentzell boundary condition. We also  refer to \cite{LiSu} (also giving a physical derivation of the boundary condition) and the references therein.

On the other hand, to the author's knowledge, a Goldstein--Wentzell boundary condition has been studied only in connection with the Laplace equation  and when $g(u)=|u|^{q-2}u$, $q>2$, in the recent papers \cite{MP,MPCorrigendum} by the author.

 The motivation for studying the problem \eqref{1.1} comes from a series of papers by the author. They deal with the wave equation with hyperbolic dynamical boundary conditions, internal and  boundary damping and source terms. Assuming in the present paper for simplicity  that the terms $f$ and $g$ are independent on $x$, and taking the damping terms in their simplest  form, these papers concern the evolutionary boundary value problem
\begin{equation}\label{1.2}
\begin{cases} u_{tt}-\Delta u+\alpha |u_t|^{m-2}u_t=f(u) \qquad &\text{in
$(0,\infty)\times\Omega$,}\\
u=0 &\text{on $(0,\infty)\times \Gamma_0$,}\\
u_{tt}+\partial_\nu u-\Delta_\Gamma
u+\beta |u_t|^{\mu-2}u_t=g(u)\qquad &\text{on
$(0,\infty)\times \Gamma_1$,}
\end{cases}
\end{equation}
where $u=u(t,x)$, $t>0$, $x\in\Omega$, $\Delta=\Delta_x$ and $\Delta_\Gamma=(\Delta_\Gamma)_x$ denote the operators in \eqref{1.1} \emph{with respect to the space variable}, and where we take
\begin{equation}\label{1.3}
\alpha,\beta\ge 0, \quad\text{and}\quad m,\mu>1.
\end{equation}
The  initial value problem associated with \eqref{1.2} was introduced in \cite{AMS} when $\alpha=0$, $f\equiv 0$, $\beta=1$ and $g(u)=|u|^{q-2}u$, $q\ge 2$. Subsequently, it was studied, as a particular case, in \cite{Dresda1}--\cite{Dresda3}. We refer to \cite{goldsteingisele,Dresda1} for the physical derivation of \eqref{1.2}. When $N=2$ it describes the vibrations of a membrane with a part of the boundary carrying a linear density of  kinetic energy.

Several papers in the literature deal with the wave equation with hyperbolic boundary conditions like \eqref{1.2} or, more in general, with kinetic boundary conditions, which are like \eqref{1.2} without the Laplace--Beltrami term. A complete list of references for this kind of problems would exceed our aim, here we shall refer to \cite{bociuNAMA,bociulasiecka1,CDCL,CDCM,chueshovellerlasiecka,FGGGR,fisvit2,fisvit2corrigendum,lastat,vazvitM3AS,global,
xiaoliang2,xiaoliang1, zhang,zuazua}.

In this paper we shall deal with the application of the well--known potential--well theory, originally introduced in \cite{paynesattinger,sattinger1}, to the case of two independent source terms, one in the interior and the other one at the boundary. This type of arguments has been used, in absence of the Laplace--Beltrami term, in the already quoted paper \cite{CDCL}, when $f$ and $g$ are of algebraic type, without using an optimal  potential well.

The aim of this paper is to introduce an optimal depth for it, considering terms $f$ and $g$ as general as possible, and to characterize it in several ways. In this regard, it is useful to prove that \eqref{1.2} possesses nontrivial stationary solutions, which turn out to be solutions of \eqref{1.1}, exactly at this energy level. Moreover, planning to get in a forthcoming paper clear--cut criteria on the initial data to discriminate between global existence and blow--up for solutions of \eqref{1.2}, it is also useful to characterize sets of initial data which automatically solve \eqref{1.1}.

This goal was achieved in \cite{MP,MPCorrigendum} in the simplest case $f\equiv 0$, $g(u)=|u|^{q-2}u$, $q>2$, while in this paper we shall deal with a more involved case.

\subsection{Main assumptions and functional setting}\label{section 1.2}
 We  assume $f$ to be subcritical with respect to the  Sobolev Embedding $H^1(\Omega)\hookrightarrow L^\tau(\Omega)$, while
$g$ enjoys the same property with respect to $H^1(\Gamma)\hookrightarrow L^\tau(\Gamma)$. The critical exponents of these embeddings respectively are
 \begin{equation}\label{1.4}
 \romega=
 \begin{cases}
\tfrac {2N}{N-2} &\text{if $N \ge 3$,}\\ \phantom{aa}\infty &\text{if $N=2$},
\end{cases}
\qquad \rgamma=
\begin{cases}
\tfrac {2(N-1)}{N-3} &\text{if $N \ge 4$,}\\ \phantom{aaa}\infty &\text{if $N=2,
3$}.
\end{cases}
 \end{equation}
Our assumptions of $f$ and $g$ will allow them to have a linear behavior close to the origin, this feature being a novelty for this kind of problems. This behavior will be related to the first eigenvalue $\lambda_1$ of the doubly elliptic eigenvalue problem
\begin{equation}\label{1.5}
\begin{cases} -\Delta u=\lambda u \qquad &\text{in
$\Omega$,}\\
\phantom{-}u=0 &\text{on $\Gamma_0$,}\\
-\Delta_\Gamma u +\partial_\nu u =\lambda u\qquad
&\text{on
$\Gamma_1$,}
\end{cases}
\end{equation}
which has been studied by the author in \cite{Eigenvalues,EigenvaluesCorrigendum}. In particular, since $\Omega$ is connected, \cite[Theorem~1.1]{Eigenvalues} applies and $\lambda_1$ is positive.

The main assumptions we made on the couple $(f,g)$ are the following ones. In them $u^+$ and $u^-$ will respectively denote the positive and negative part of $u\in\R$:
\renewcommand{\labelenumi}{{(A\arabic{enumi})}}
\begin{enumerate}
\item $f,g\in C(\R)\cap C^1([0,\infty))\cap C^1((-\infty,0])$, and there are $2<p<\romega$,  $2<q<\rgamma$ such that
$|f'(u)|=O(|u|^{p-2})$,  $|g'(u)|=O(|u|^{q-2})$ as $|u|\to\infty$;
\item the functions $f(u)/u$ and $g(u)/u$ are both decreasing in $(-\infty,0)$ and increasing in $(0,\infty)$.
Moreover, denoting $\ell^\pm=\lim\limits_{u\to 0^\pm} f(u)/u$ and $\mathfrak{m}^\pm=\lim\limits_{u\to 0^\pm} g(u)/u$, we have
\begin{equation}\label{1.6}
  -\infty<\ell^\pm,\,\,\mathfrak{m}^\pm<\lambda_1;
\end{equation}
\item  setting
\begin{equation}\label{1.7}
\left\{
\begin{alignedat}{2}
&h(u):=f(u)-(\ell^+u^+-\ell^-u^-),\qquad && H(u)=\int_0^u h(s)ds,  \\
&k(u):=g(u)-(\mathfrak{m}^+u^+-\mathfrak{m}^-u^-), \quad && K(u)=\int_0^u k(s)ds,
\end{alignedat}\right.\quad\text{for all $u\in\R$,}
\end{equation}
 there are constants $p_0,q_0>2$ and $M\ge 0$ such that
\begin{equation}\label{1.8}
h(u)u\ge p_0 H(u),\quad\text{and}\quad k(u)u\ge q_0 K(u)\quad\text{for $|u|\ge M$;}
\end{equation}
\item the functions $\sigma(u):=h(u)/u$ and $\xi(u):=k(u)/u$ have no critical points at positive level, and
\begin{equation}\label{1.9}
\lim_{u\to\pm\infty}\sigma(u)>0\qquad\text{or}\quad \lim_{u\to\pm\infty}\xi(u)>0.
\end{equation}
\end{enumerate}

The simplest example of a couple $(f,g)$ satisfying assumptions (A1--4) is given by
\begin{equation}\label{1.10}
\begin{gathered}
f(u)=\gamma |u|^{p-2}u, \qquad \text{and}\quad  g(u)=\delta |u|^{q-2}u,\quad\text{when}  \\
2<p<\romega, \quad 2<q<\rgamma, \quad \gamma,\delta\ge 0, \quad \gamma+\delta>0.
\end{gathered}
\end{equation}
Indeed, assumptions (A1--2) trivially hold with $\ell^\pm=\mathfrak{m}^\pm=0$, so $h\equiv f$ and $k\equiv g$ in \eqref{1.7}. Hence
also \eqref{1.8} holds, with assumption (A3), by taking $p_0=p$ and $q_0=q$. Finally, $\sigma(u)=\gamma |u|^{p-2}$ and
$\xi(u)=\delta |u|^{q-2}$ trivially satisfy assumption (A4), since we have $\gamma>0$ or $\delta>0$.

A more general example is the following couple $(f,g)$, which essentially generalizes \eqref{1.10} and constitutes the model example for this paper:
\begin{equation}\label{1.11}
\begin{gathered}
f(u)=\ell^+u^+-\ell^-u^-+\gamma_0 |u|^{p_0-2}u+\gamma |u|^{p-2}u, \qquad 2<p_0<p<\romega,  \\
g(u)=\mathfrak{m}^+u^+-\mathfrak{m}^-u^-+\delta_0 |u|^{q_0-2}u+\delta |u|^{q-2}u, \qquad 2<q_0<q<\rgamma,  \\
\gamma_0,\gamma,\delta_0,\delta\ge 0, \qquad \gamma_0+\gamma+\delta_0+\delta>0,\qquad \ell^\pm,\mathfrak{m}^\pm <\lambda_1.
\end{gathered}
\end{equation}
Indeed, also in this case assumptions (A1--2) trivially hold, and in \eqref{1.7} we have $h(u)=\gamma_0|u|^{p_0-2}u+\gamma |u|^{p-2}u$ and $k(u)=\delta_0|u|^{q_0-2}u+\delta |u|^{q-2}u$. Since, for example considering $h$, we have $h(u)u-p_0H(u)=\gamma(p-p_0)|u|^p/p\ge 0$, also \eqref{1.8} holds true, together with assumption (A3). Also in this case $h$ and $k$ have no critical points (unless they identically vanish), and \eqref{1.9} holds, with (A4), since $(\gamma_0,\gamma,\delta_0,\delta)\not=0$.

The model couple of nonlinearities $(f,g)$ given in the example \eqref{1.11}, when $\ell^+=\ell^-$ and $\mathfrak{m}^+=\mathfrak{m}^-$, covers as possible the model couple $(f_1,g_1)$ given in example (1.3) of \cite{Dresda1,Dresda2}. The case $\ell^+\not=\ell^-$ and/or $\mathfrak{m}^+\not=\mathfrak{m}^-$ is studied here for the sake of completeness. Moreover, other different examples of couples satisfying assumptions (A1--4)  may be given. One of them is made explicit in the sequel, see \S~\ref{section 2.5}.

It is worth to make some comments on assumptions (A1--4). Although the main novelty of problem \eqref{1.1} is constituted by the presence of two completely independent source terms, a short comparison of assumptions (A1--4) with similar sets of assumptions in the literature, in the case  $g\equiv 0$, can be of some interest.

Being potential--well arguments of wide use, any comparison of this type would be largely incomplete. Here we shall refer to the classical papers \cite{AmbrosettiRabinowitz} (as generalized in \cite[Chapter~8, \S 8.2, p. 123]{ambrosettimalchiodi}), \cite{AmbrosettiLincei} (as generalized in \cite[Chapter~7, \S 7.62, p. 110]{ambrosettimalchiodi}), and \cite{paynesattinger}.

Assumption (A1) is a classical regularity and growth assumption, which is present in a weaker form in the first paper quoted above, and in a stronger one in the others two papers. The monotonicity of the function $f(u)/u$, assumed in (A2), is the key ingredient in the last two papers quoted above, while it is absent in the first one. Moreover, assumption (A3) allows $f$ to have a linear behavior close to the origin, as in \cite[Chapter~8, \S 8.2, p. 123]{ambrosettimalchiodi}, while in the other two papers above one assumes that $f(u)=o(|u|)$ as $u\to 0$. Assumption (A3) is the classical Ambrosetti--Rabinowitz condition for $h$ and $k$. This condition  is explicitly assumed in the first and the third papers above and it is implicitly assumed in the second one. Indeed, by the convexity of $\sigma$ assumed in \cite[Chapter~7, \S 7.62, p. 110]{ambrosettimalchiodi}, the authors get (see \cite[Chapter~7, proof of Lemma~7.17, p.112]{ambrosettimalchiodi}) that $f(u)u\ge 3F(u)$ for all $u\in\R$.
The lack of critical points of $\sigma$, which is  prescribed by assumption (A4), is explicitly assumed in the last two papers above, while it is not present in the first one. Finally, the condition \eqref{1.9} allows to exclude the case $f\equiv0$, equally excluded in all papers above.

Summarizing, the set (A1--4) is more restrictive than the one in \cite[Chapter~8, \S 8.2, p. 123]{ambrosettimalchiodi}, in which merely the existence of critical points at the Mountain Pass level is obtained. This fact seems quite natural, since we are going to give more precise results.
On the other hand, the set (A1--4) is less restrictive than those in the last two papers.

\begin{rem}\label{Remark0} Assumption (A4) needs a further explanation. Let us  make some remarks on the functions $\sigma$ and $\xi$. By \eqref{1.7} we have
\begin{equation}\label{1.12}
\sigma(u)=\begin{cases}
f(u)/u-\ell^+,\,\,&\text{if $u>0$,}\\
f(u)/u-\ell^-,\,\,&\text{if $u<0$,}
\end{cases}\quad
\xi(u)=\begin{cases}
g(u)/u-\mathfrak{m}^+,\,\,&\text{if $u>0$,}\\
g(u)/u-\mathfrak{m}^-,\,\,&\text{if $u<0$.}
\end{cases}
\end{equation}
Hence, by (A2), the functions $\sigma$ and $\xi$ extend (keeping the same notation) to $\sigma,\xi\in C(\R)$, provided we set $\sigma(0)=\xi(0)=0$. Using assumption (A2) again, we get that these functions are decreasing in $(-\infty,0]$ and increasing in $[0,\infty)$, so $\sigma,\xi\ge 0$ in $\R$.

Moreover, when considering the possible behaviors of $\sigma$ in $[0,\infty)$, taking into account the monotonicity of $\sigma$, the following alternative holds:
\renewcommand{\labelenumi}{{\roman{enumi})}}
\begin{enumerate}
\item either $\sigma\equiv 0$ in $[0,\infty)$, this fact being equivalent to $\lim\limits_{u\to\infty} \sigma(u)=0$,
\item or $\sigma\not\equiv 0$ in $[0,\infty)$, this fact being equivalent to $\lim\limits_{u\to\infty} \sigma(u)>0$. In this case
there is $\overline{u}\ge 0$ such that $\sigma=0$ in $[0,\overline{u}]$ and $\sigma>0$ in $(\overline{u},\infty)$.
\end{enumerate}
The lack of critical points asserted in assumption (A4) then reduces to assert that, when the alternative ii) holds, one has $\sigma'>0$ in $(\overline{u},\infty)$. This strict  monotonicity of $\sigma$ is explicitly assumed in the last two papers quoted above
in the whole of $(0,\infty)$.

When considering the possible behaviors of $\sigma$ in $(-\infty,0]$ and those of $\xi$ in $[0,\infty)$ and in $(-\infty,0]$, taking into account the monotonicity properties above, alternatives analogous to i)--ii) above occur, and the lack of critical points asserted in assumption (A4) has the same meaning.
\end{rem}
When dealing with problem \eqref{1.2}, we shall also assume that
\begin{equation}\label{1.13}
\begin{alignedat}2
 &p\le
\begin{cases}
1+\romega/2&\text{if  $\alpha=0$,}\\
1+\romega/ \overline{m}'&\text{if  $\alpha>0$,}
\end{cases}\quad
&&\text{where}\quad \overline{m}:=\max\{2,m\},\\
&q\le \begin{cases}
1+\rgamma/2&\text{if  $\beta=0$,}\\
1+\rgamma/ \overline{\mu}'&\text{if  $\beta>0$,}
\end{cases}\quad
&&\text{where}\quad \overline{\mu}:=\max\{2,\mu\}.
\end{alignedat}
\end{equation}
This assumption allows to properly define weak solutions of \eqref{1.2} and it is related with well--posedness issues, see \cite{Dresda1,Dresda2}.
 \begin{footnote}{Clearly, assumption \eqref{1.13} has to be skipped when dealing with stationary solutions of \eqref{1.2}.}\end{footnote}

  We point out  that, although also the cases $p\ge \romega$ (when $N\ge 3$) and $q\ge \rgamma$ (when $N\ge 3$) were considered in \cite{Dresda2}, only the cases $p<\romega$ and $q<\rgamma$ (considered in this paper) are of interest when dealing with the dichotomy between global existence and blow--up, see \cite[Remark~1, p.4580]{Dresda3}.

In the sequel we shall use the primitives of $f$ and $g$, respectively defined by
\begin{equation}\label{1.14}
F(u)=\int_0^u f(s)\,ds,\qquad\text{and}\quad G(u)=\int_0^u g(s)\,ds\qquad\text{for $u\in\R$.}
\end{equation}
 To state our main results we first introduce some basic notation. Subsequently, we shall
identify $L^\vartheta(\Gamma_1)$, for $1\le \vartheta\le \infty$, with its isometric image in $L^\vartheta(\Gamma)$,
that is
\begin{equation}\label{1.15}
L^\vartheta(\Gamma_1)=\{u\in L^\vartheta(\Gamma): u=0\quad\text{a.e. on $\Gamma_0$}\},
\end{equation}
where $L^\vartheta(\Gamma)$ and a.e. equivalence are meant in the sense of the Hausdorff measure $\mathcal{H}^{N-1}$ (restricted to measurable subsets of $\Gamma$).

Moreover we shall denote by $\Tr$ the trace operator from $H^1(\Omega)$ onto $H^{1/2}(\Gamma)$ and, for simplicity of notation,
$\Tr u=u_{|\Gamma}$ for all $u\in H^1(\Omega)$.

We introduce the Hilbert spaces $H^0 = L^2(\Omega)\times L^2(\Gamma_1)$ and
\begin{equation}\label{1.16}
H^1 = \left\{(u,v)\in H^1(\Omega)\times H^1(\Gamma): v=u_{|\Gamma}, v=0
\quad \text{a.e. on $\Gamma_0$}\right\},
\end{equation}
with the topologies inherited from the product spaces. For the sake of
simplicity we shall identify, when useful, $H^1$ with its isomorphic
counterpart
\begin{equation}\label{1.17}
H^1_{\Gamma_0}(\Omega,\Gamma)=\{u\in H^1(\Omega): u_{|\Gamma}\in H^1(\Gamma)\cap
L^2(\Gamma_1)\},
\end{equation}
which has been studied, for example, in \cite{pucvit}, through the identification $(u,u_{|\Gamma})\mapsto
u$. We shall write, without further comments, $u\in H^1$ for
functions defined on $\Omega$. Moreover, we shall drop the notation
$u_{|\Gamma}$, when useful, so we shall write $\|u\|_{L^2(\Gamma)}$
and so on, for $u\in H^1$.
We shall also drop the notation  $d\mathcal{H}^{N-1}$ in boundary integrals, so writing $\int_\Gamma u=\int_\Gamma u\,d\mathcal{H}^{N-1}$.
\subsection{Main results}\label{Section 1.3}
By assumption (A1) (see Lemma~\ref{Lemma 4} below)  we can introduce in $H^1$ the nonlinear functional $I\in C^1(H^1)=C^1(H^1;\R)$ defined by
\begin{footnote}{here $\nabla_\Gamma$
denotes the Riemannian gradient on $\Gamma$ and $|\cdot|_\Gamma$,
the norm associated to the Riemannian scalar product on the tangent
bundle of $\Gamma$. See \S~\ref{section 2.2}.}\end{footnote}
\begin{equation}\label{1.18}
 I(u)=\tfrac 12 \int_\Omega |\nabla u|^2 +\tfrac 12
\int_{\Gamma_1} |\nabla_\Gamma  u|_\Gamma^2-\int_\Omega F(u)-\int_{\Gamma_1}G(u),
\end{equation}
which represents the potential energy associated to problem \eqref{1.2}. For this reason we shall call it the \emph{energy functional} when dealing with  problem \eqref{1.1}.

We also introduce the potential--well depth $d$ given by
\begin{equation}\label{1.19}
 d=\inf_{u\in H^1\setminus\{0\}}\sup_{\lambda>0} I(\lambda u).
\end{equation}

Our first main result, which is a main tool in the subsequent discussion, asserts that problem \eqref{1.1} admits  at least a nontrivial weak solution, see Definition~\ref{Definition 2} below, coinciding with a critical point of the functional $I$, at the \emph{positive} energy level $d$.  We also recognize that, when \eqref{1.13} holds, such a solution is also a stationary weak solution of \eqref{1.2}, see Definition~\ref{Definition 1} below.

\begin{thm}[\bf Existence of solutions]\label{Theorem 1} Let assumptions (A1--4) hold. Then problem \eqref{1.1} has at least a weak solution $u\in H^1$ such that $I(u)=d>0$. When also \eqref{1.13} holds, $u$ is  also a stationary weak solution of problem \eqref{1.2}.
Moreover $d$ coincides with the Mountain Pass level of the functional $I$, that is $d=c$, where
\begin{equation}\label{1.19bis}
c:=\inf_{\sigma\in\Sigma} \max_{t\in [0,1]}I(\sigma(t)), \Sigma=\{\sigma\in C([0,1];H^1):\,\sigma(0)=0, I(\sigma(1))<0\}.
\end{equation}
\end{thm}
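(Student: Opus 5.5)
The plan is to use the Nehari manifold together with the Mountain Pass geometry. Write $I(u)=\tfrac12\|u\|_\ell^2-\int_\Omega H(u)-\int_{\Gamma_1}K(u)$. Here
\[
\|u\|_\ell^2:=\int_\Omega|\nabla u|^2+\int_{\Gamma_1}|\nabla_\Gamma u|_\Gamma^2-\int_\Omega(\ell^+(u^+)^2+\ell^-(u^-)^2)-\int_{\Gamma_1}(\mathfrak m^+(u^+)^2+\mathfrak m^-(u^-)^2).
\]
Since $\ell^\pm,\mathfrak m^\pm<\lambda_1$ and $\lambda_1$ is the first eigenvalue of \eqref{1.5}, the variational characterization of $\lambda_1$ shows that $\|\cdot\|_\ell$ is equivalent to the $H^1$ norm. (When some coefficients are negative the claim is trivial; otherwise use $\|u\|^2\ge\lambda_1(\|u\|^2_{L^2(\Omega)}+\|u\|^2_{L^2(\Gamma_1)})$.)

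\emph{Fiber analysis.} For $u\ne0$ consider $\phi_u(\lambda)=I(\lambda u)$. We have
\[
\phi_u'(\lambda)=\lambda\Bigl(\|u\|_\ell^2-\int_\Omega\sigma(\lambda u)u^2-\int_{\Gamma_1}\xi(\lambda u)u^2\Bigr).
\]
By Remark~\ref{Remark0}, $\sigma$ and $\xi$ are nonnegative and nondecreasing in $|u|$. Let
\[
\Psi_u(\lambda)=\int_\Omega\sigma(\lambda u)u^2+\int_{\Gamma_1}\xi(\lambda u)u^2 .
\]
If $\Psi_u(\lambda)\to\infty$, then $\sup_\lambda I(\lambda u)$ is attained at a unique $\lambda(u)$. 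Uniqueness comes from strict monotonicity of $\Psi_u$ where it is positive, which is where (A4) (no critical points at positive level) is used. Otherwise $\sup=+\infty$. Condition \eqref{1.9} together with (A3) guarantees that the set $\mathcal W$ of $u$ with finite sup is a nonempty open cone. Then $d=\inf_{\mathcal N}I$, where $\mathcal N=\{u\ne0:\langle I'(u),u\rangle=0\}$ is the Nehari manifold.

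\emph{Positivity of $d$.} By (A1), $H(u)\le\varepsilon u^2+C|u|^p$ and $K(u)\le\varepsilon u^2+C|u|^q$, because $h(u)=o(u)$ at $0$. Using the subcritical Sobolev and trace embeddings we get $I(u)\ge c\|u\|^2-C\|u\|^p-C\|u\|^q$, so $I\ge a>0$ on a small sphere. Every ray in $\mathcal W$ crosses that sphere, giving $d\ge a>0$.

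\emph{Identification $d=c$.} If $I(\lambda u)<0$ for some $\lambda$, the segment $t\mapsto t\lambda u$ belongs to $\Sigma$, so $c\le\sup_\lambda I(\lambda u)$; taking the infimum gives $c\le d$. Conversely, any $\sigma\in\Sigma$ must cross $\mathcal N$. Indeed, $\langle I'(v),v\rangle>0$ near $0$, while $I(\sigma(1))<0$ forces $\langle I'(\sigma(1)),\sigma(1)\rangle<0$. The latter uses the inequality $h(u)u-2H(u)\ge0$, which follows from the monotonicity of $\sigma$ since $H(u)=\int_0^u s\sigma(s)\,ds\le\sigma(u)u^2/2$. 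Hence $\max I\circ\sigma\ge d$, so $c\ge d$.

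\emph{Existence at level $d$.} (A3) gives the Ambrosetti--Rabinowitz condition for $h$ and $k$ beyond $M$. By the standard argument, Palais--Smale sequences are bounded in $H^1$. Compactness of the embeddings $H^1\hookrightarrow L^p(\Omega)$ and $H^1\hookrightarrow L^q(\Gamma_1)$ (Lemma~\ref{Lemma 4}, compact since subcritical) then gives strong convergence of a subsequence. The Mountain Pass Theorem yields a critical point $u$ with $I(u)=c=d>0$, which is a weak solution of \eqref{1.1} by Definition~\ref{Definition 2}. Under \eqref{1.13}, the time-independent $u$ satisfies the weak formulation of \eqref{1.2} in Definition~\ref{Definition 1}, since the terms with $u_t$ and $u_{tt}$ vanish.

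\emph{Main obstacle.} I expect the delicate step to be the fiber analysis. The difficulty is the case where only one of $\sigma$, $\xi$ is nontrivial, or where the nontriviality holds on only one half-line. Then $\mathcal W\ne H^1\setminus\{0\}$, and one must check both that the infimum in \eqref{1.19} is effectively over $\mathcal W$ and that $\Sigma\ne\emptyset$. The plan is to use \eqref{1.9} to produce a function with large positive (or negative) values in $\Omega$ or on $\Gamma_1$.
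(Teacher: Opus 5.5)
Your proof is correct. It departs from the paper at the key inequality $c\ge d$. The other steps are essentially the paper's: the fiber analysis (Lemma~\ref{Lemma 7}), the (PS) condition via the Ambrosetti--Rabinowitz condition on $h,k$ plus coercivity of $\|u\|_{H^1}^2-A(u)$, and $c\le d$ via straight-line paths.

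\textbf{How the two routes differ.} The paper first obtains a critical point $u$ with $I(u)=c$ (Proposition~\ref{Proposition 1}, via the variant Theorem~\ref{Theorem 8}). It then notes that $\theta_u'(1)=\langle dI(u),u\rangle_{H^1}=0$ forces $u\in H^1_{\mathcal S}$ and $\tau_u=1$, so $c=\max_{t>0}I(tu)\ge d$. In the paper, therefore, the identification $c=d$ rests on the existence result.

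You prove $c\ge d$ purely geometrically, in the style of Willem's Nehari-manifold argument. The fiber analysis gives $d=\inf_{\mathcal N}I$ directly. Every $\sigma\in\Sigma$ then crosses $\mathcal N$, because $\mathcal K>0$ on small nonzero $v$ while $\mathcal K\le 2I$ everywhere. The latter bound comes from $h(u)u\ge 2H(u)$ and $k(u)u\ge 2K(u)$, which you correctly derive from the monotonicity of $\sigma,\xi$; the quadratic parts cancel in $2I-\mathcal K$. Your route gives $c=d=\inf_{\mathcal N}I>0$ before any critical point is found, so the minimax characterization is independent of the existence theory, and you get part of Theorem~\ref{Theorem 2} as a by-product. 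The paper's route avoids the extra inequality and the crossing argument by leaning on Theorem~\ref{Theorem 8}.

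\textbf{Points to tighten.} First, ``the Mountain Pass Theorem yields a critical point at level $c$'' needs care. The standard version (Theorem~\ref{Theorem 7}) only gives the fixed-endpoint level $c_l$, which may differ from $c$, as the paper stresses. You need one of the following:
\begin{itemize}
\item Theorem~\ref{Theorem 8}, whose hypothesis iv) follows from your bound $I(u)\ge c\|u\|^2-C\|u\|^p-C\|u\|^q$;
\item the deformation lemma applied to the free-endpoint class $\Sigma$. This class is invariant under the deformation precisely because you already know $c>0=I(0)$, so $0$ stays fixed and $I(\sigma(1))<0$ is preserved.
\end{itemize}

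Second, the ``otherwise $\sup=+\infty$'' branch of your fiber analysis needs the explicit dichotomy that $\Psi_u$ is either $\equiv0$ or tends to $\infty$. By (A3), on any half-line where $\sigma\not\equiv0$ one has $H(u)\ge c|u|^{p_0}$ for large $|u|$. Hence $\sigma(u)\ge p_0H(u)/u^2\to\infty$ there, and monotone convergence finishes; the same holds for $\xi$. This is exactly what the paper's cone $\mathcal S$ in \eqref{3.29} encodes.

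Finally, $\|\cdot\|_\ell$ is not a norm when $\ell^+\ne\ell^-$ or $\mathfrak m^+\ne\mathfrak m^-$. It is only a positively $2$-homogeneous functional comparable to $\|u\|_{H^1}^2$, but that is all you use.
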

The proof of Theorem~\ref{Theorem 1} relies on applying  a  variant of the Mountain Pass Theorem, recalled in \S~\ref{section 2}, and exploiting the consequences of assumption (A2).

Weak solutions of \eqref{1.1} at level $d$ can be further characterized by introducing the Nehari functional $\mathcal{K}\in C^1(H^1)$ defined by
\begin{equation}\label{1.20}
 \mathcal{K}(u)=\int_\Omega |\nabla u|^2 + \int_{\Gamma_1} |\nabla_\Gamma  u|_\Gamma^2-\int_\Omega f(u)u-\int_{\Gamma_1}g(u)u,
\end{equation}
and the Nehari manifold of the functional $I$, that is
\begin{equation}\label{1.21}
 \mathcal{N}:=\{u\in H^1\setminus\{0\}: \mathcal{K}(u)=0\}.
 \end{equation}
\begin{thm}[\bf Characterizations of $d$ and of solutions at level $d$]\label{Theorem 2}
 Let assumptions (A1--4)  hold. Then we have
\begin{equation}\label{1.22}
d=\inf_{u\in\mathcal{N}}I(u),
\end{equation}
and, consequently, weak solutions $u$ of \eqref{1.1} such that $I(u)=d$ are lowest energy nontrivial weak solutions of \eqref{1.1}, hence they satisfy the condition
\begin{equation}\label{1.23}
u\in\mathcal{N},\quad I(u)\le d.
\end{equation}
Conversely, any $u\in H^1$ satisfying \eqref{1.23} is a lowest energy nontrivial weak solution of \eqref{1.1} and, when also \eqref{1.13} holds, it is a weak stationary solution of \eqref{1.2}.
\end{thm}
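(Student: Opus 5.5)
The argument rests on a fiber analysis: for fixed $u\in H^1\setminus\{0\}$, study the scalar map $\phi_u(\lambda)=I(\lambda u)$ on $(0,\infty)$. Since $I\in C^1(H^1)$, we have $\phi_u'(\lambda)=\mathcal{K}(\lambda u)/\lambda$.

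Using \eqref{1.7}, write
\[
\frac{\phi_u'(\lambda)}{\lambda}=Q(u)-\int_\Omega \sigma(\lambda u)u^2-\int_{\Gamma_1}\xi(\lambda u)u^2,
\]
where
\[
Q(u)=\int_\Omega|\nabla u|^2+\int_{\Gamma_1}|\nabla_\Gamma u|_\Gamma^2-\int_\Omega(\ell^+(u^+)^2+\ell^-(u^-)^2)-\int_{\Gamma_1}(\mathfrak{m}^+(u^+)^2+\mathfrak{m}^-(u^-)^2).
\]
By \eqref{1.6} and the variational characterization of $\lambda_1$ from \cite{Eigenvalues}, $Q(u)\ge c_0\|u\|_{H^1}^2>0$. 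By Remark~\ref{Remark0}, $\lambda\mapsto\sigma(\lambda u(x))$ and $\lambda\mapsto\xi(\lambda u(x))$ are nondecreasing, so $\phi'_u(\lambda)/\lambda$ is nonincreasing in $\lambda$. It tends to $Q(u)>0$ as $\lambda\to0^+$ by dominated convergence, using (A1) and subcriticality.

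Two cases follow. If $\int_\Omega\sigma(\lambda u)u^2+\int_{\Gamma_1}\xi(\lambda u)u^2$ stays below $Q(u)$ for all $\lambda$, then $\phi_u$ is increasing with no zero of $\phi_u'$, so $\lambda u\notin\mathcal N$ for every $\lambda>0$. Moreover $\phi_u$ is increasing, so $\sup_\lambda I(\lambda u)\ge\phi_u(\lambda)$ for all $\lambda$. Otherwise there is a nonempty set $\Lambda_u$ of $\lambda$ with $\lambda u\in\mathcal N$. Monotonicity alone makes $\Lambda_u$ a closed interval $[\lambda_1(u),\lambda_2(u)]$, with $\phi_u'>0$ before it, $\phi_u'=0$ on it, and $\phi_u'<0$ after it. In that case $\sup_{\lambda>0}I(\lambda u)=I(\lambda u)$ for any $\lambda\in\Lambda_u$. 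Hence
\[
d=\inf\Big\{\sup_{\lambda>0}I(\lambda u): u\ne0\Big\}\le\inf_{\mathcal N}I,
\]
since every $w\in\mathcal N$ satisfies $1\in\Lambda_w$ and so $\sup_\lambda I(\lambda w)=I(w)$. For the reverse inequality, take $u$ with $\sup_\lambda I(\lambda u)<\infty$. I would show that such $u$ falls in the second case, or else that $\sup_\lambda I(\lambda u)$ is a limit approximated by $\mathcal N$-values. The cleaner route is to invoke Theorem~\ref{Theorem 1}. There $d>0$ is attained by a critical point $u_0$, which lies in $\mathcal N$ since $\mathcal K(u_0)=\langle I'(u_0),u_0\rangle=0$. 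Hence $\inf_{\mathcal N}I\le I(u_0)=d$. Combined with the previous inequality, this gives \eqref{1.22}. Assumption (A4) is not needed for this equality. It only guarantees, in the second case, that $\Lambda_u$ is a single point, but that is not required here.

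The first consequence then follows. Any nontrivial weak solution $v$ satisfies $\mathcal K(v)=\langle I'(v),v\rangle=0$, so $v\in\mathcal N$ and $I(v)\ge d$. Solutions at level $d$ are therefore of lowest energy and satisfy \eqref{1.23}.

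The converse is the main obstacle: showing that a minimizer of $I$ on $\mathcal N$ is a free critical point. Given $u$ satisfying \eqref{1.23}, \eqref{1.22} yields $I(u)=d=\min_{\mathcal N}I$. The first plan is a Lagrange multiplier argument, which needs $\langle\mathcal K'(u),u\rangle\ne0$. Computing,
\[
\langle\mathcal K'(u),u\rangle=-\int_\Omega\sigma'(u)u^3-\int_{\Gamma_1}\xi'(u)u^3
\]
formally, after cancelling the quadratic part using $\mathcal K(u)=0$. Each integrand is $\ge0$, and by (A4) it is strictly positive where $\sigma(u)>0$ (resp. $\xi(u)>0$). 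Since $\mathcal K(u)=0$ and $Q(u)>0$, the set where $\sigma(u)>0$ or $\xi(u)>0$ has positive measure, so $\langle\mathcal K'(u),u\rangle<0$. Regularity of $\sigma$ near $0$ is delicate, since $f$ is only $C^1$ on each half-line, so I would argue via one-sided derivatives of $\lambda\mapsto\mathcal K(\lambda u)$ at $\lambda=1$.

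If this is too fragile, the fallback is the deformation argument. Suppose $I'(u)\ne0$. Choose a path $\lambda\mapsto\lambda u$ for $\lambda\in[0,T]$ with $I(Tu)<0$; this is possible because (A3)–(A4) give superquadratic growth in at least one of the terms. By (A4), $\Lambda_u=\{1\}$, so $\phi_u$ has a strict maximum $d$ attained only at $\lambda=1$. Deform the path near $u$ along a pseudo-gradient direction to get a path in $\Sigma$ with maximum below $d=c$. This contradicts Theorem~\ref{Theorem 1}.

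Once $u$ is a critical point, it is a weak solution of \eqref{1.1}, and the stationary statement for \eqref{1.2} follows as in Theorem~\ref{Theorem 1}.
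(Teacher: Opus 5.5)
Your proposal is correct and follows essentially the same route as the paper. Your fiber analysis of $\lambda\mapsto I(\lambda u)$ is the content of Lemma~\ref{Lemma 7}, used for $d\le\inf_{\mathcal N}I$; the reverse inequality comes from the critical point of Theorem~\ref{Theorem 1}; and the converse is the Nehari natural-constraint argument of Lemma~\ref{Lemma 8}, with $\langle d\mathcal{K}(u),u\rangle_{H^1}=-\int_\Omega\sigma'(u)u^3-\int_{\Gamma_1}\xi'(u)u^3<0$ on $\mathcal N$ by (A4). Your regularity worry is unnecessary: by Remark~\ref{Remark 4}, $h,k\in C^1(\R)$ and $s\mapsto\sigma'(s)s$, $s\mapsto\xi'(s)s$ extend continuously by $0$ at $s=0$, so $\mathcal{K}\in C^1(H^1)$, the computation is rigorous, and neither the one-sided-derivative workaround nor the deformation fallback is needed.
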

The proof of  the minimality of the energy of solutions  at level $d$, stated in  Theorem~\ref{Theorem 2}, is of  elementary nature.
On the other hand, the last part of Theorem~\ref{Theorem 2} is of particular interest when discussing the long--time behavior of weak solutions of \eqref{1.2} when the initial datum $u_0:= u(0,\cdot)$ satisfies \eqref{1.23}

To illustrate the interest of  the minimality asserted in Theorem~\ref{Theorem 2}, since there are solutions at higher energy levels, we are now going to present our third main result, which can be also of independent interest.

\begin{thm}[\bf Multiplicity]\label{Theorem 3} Let assumption (A1--4) hold, and also suppose that $f$ and $g$ are odd. Then there is a sequence $(u_n)_n$ in $H^1$ such that $u_n$ and $-u_n$ are nontrivial weak solutions of \eqref{1.1} with  $I(u_n)=I(-u_n)\to\infty$ as $n\to\infty$.
\end{thm}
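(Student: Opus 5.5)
We plan to deduce Theorem~\ref{Theorem 3} from the symmetric Mountain Pass Theorem (the $\Z_2$-version of Ambrosetti--Rabinowitz, in its Fountain-type or Rabinowitz formulation) applied to the even functional $I\in C^1(H^1)$. Since $f$ and $g$ are odd, $F$ and $G$ are even, so $I(-u)=I(u)$ and $I(0)=0$. Hence every critical point $u_n$ produces the critical point $-u_n$ at the same level. Critical points of $I$ are weak solutions of \eqref{1.1} by Definition~\ref{Definition 2} and Lemma~\ref{Lemma 4}. Therefore it suffices to check three facts: the Palais--Smale condition, a uniform positive lower bound for $I$ on a small sphere of a finite-codimensional subspace, and $I\to-\infty$ on finite-dimensional subspaces (or the Fountain-theorem variants of the last two).

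\textbf{Step 1: Palais--Smale condition.} We expect this was already established for Theorem~\ref{Theorem 1}. We write
\[
I(u)=\tfrac12 Q(u)-\int_\Omega H(u)-\int_{\Gamma_1}K(u),
\qquad
Q(u)=\|u\|^2_{H^1\text{-part}}-\ell^+\!\!\int_\Omega (u^+)^2-\ell^-\!\!\int_\Omega(u^-)^2-\mathfrak m^+\!\!\int_{\Gamma_1}(u^+)^2-\mathfrak m^-\!\!\int_{\Gamma_1}(u^-)^2 .
\]
By \eqref{1.6} and the variational characterization of $\lambda_1$, $Q$ is coercive: $Q(u)\ge c\|u\|^2_{H^1}$. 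We then use (A3) with $\theta=\min\{p_0,q_0\}>2$ through the combination $I(u)-\theta^{-1}I'(u)u$, together with $h(u)u-\theta H(u)\ge -C$ for $|u|\le M$. This bounds PS sequences. Compactness follows from the subcritical growth (A1) and the compact embeddings $H^1\hookrightarrow L^p(\Omega)$ and $H^1\hookrightarrow L^q(\Gamma)$, in the standard way.

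\textbf{Step 2: geometry, via the Fountain Theorem of Bartsch.} Take a Hilbert basis $(e_j)$ of the separable space $H^1$, for instance the eigenfunctions of \eqref{1.5}. Set $Y_k=\operatorname{span}\{e_1,\dots,e_k\}$ and $Z_k=\overline{\operatorname{span}}\{e_j:j\ge k\}$. Define
\[
\beta_k=\sup_{u\in Z_k,\ \|u\|=1}\bigl(\|u\|_{L^p(\Omega)}+\|u\|_{L^q(\Gamma_1)}\bigr).
\]
By compactness of the embeddings, $\beta_k\to0$. By (A1) and \eqref{1.7}, $|H(u)|\le C(|u|^2+|u|^p)$ and $|K(u)|\le C(|u|^2+|u|^q)$. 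We need a bound without the quadratic term; since $H(u)\le C|u|^p$ near $0$ may fail, we simply absorb: $H(u)\le \varepsilon u^2+C_\varepsilon|u|^p$, using $h(u)=o(u)$ at $0$ by definition of $\ell^\pm$, and similarly for $K$. Combining with the coercivity of $Q$ gives, for $u\in Z_k$,
\[
I(u)\ge c\|u\|^2-C\beta_k^p\|u\|^p-C\beta_k^q\|u\|^q .
\]
Choosing $r_k\to\infty$ appropriately, say $r_k=(\beta_k^p+\beta_k^q)^{-1/(\max\{p,q\}-2)}$ up to constants, yields $\inf_{Z_k\cap\{\|u\|=r_k\}}I\to\infty$.

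\textbf{Step 3: the anticoercive estimate on $Y_k$, which is the main obstacle.} Assumption \eqref{1.9} only guarantees that at least one of the four one-sided limits of $\sigma$ or $\xi$ is positive; it does not say that both $H$ and $K$ are superquadratic in both directions. Assume, say, $\lim_{u\to+\infty}\sigma(u)>0$. Then (A3) gives $H(u)\ge c u^{p_0}-C$ for $u\ge M$, while $H,K\ge0$ everywhere, because $\sigma,\xi\ge0$ by Remark~\ref{Remark0}. For odd $f$ the positive limit transfers to $-\infty$ as well, so $H(u)\ge c|u|^{p_0}-C$ for all $u$ (if instead the positive limit belongs to $\xi$, argue with $K$ on $\Gamma_1$). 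On the finite-dimensional space $Y_k$ all norms are equivalent, and $u\mapsto\|u\|_{L^{p_0}(\Omega)}$ is a norm there. Hence
\[
I(u)\le C\|u\|^2-c\|u\|^{p_0}+C ,
\]
which tends to $-\infty$, so $\max_{Y_k\cap\{\|u\|=\rho_k\}}I\le0$ for $\rho_k>r_k$ large. In the boundary case one must check that $\|u\|_{L^{q_0}(\Gamma_1)}$ is a norm on $Y_k$. This needs the basis elements to have linearly independent traces on $\Gamma_1$, which holds for eigenfunctions of \eqref{1.5}: a combination vanishing on $\Gamma_1$ and $\Gamma_0$ is harmonic-like with zero trace, forcing it to be zero by unique continuation. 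If this is delicate, one can replace the basis so that $Y_k$ consists of functions with independent traces. The Fountain Theorem then yields critical values $c_k\ge\inf_{Z_k\cap\{\|u\|=r_k\}}I\to\infty$, with critical points $\pm u_k$ that are nontrivial since $c_k>0$.
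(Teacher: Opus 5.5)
\textbf{The gap: case B of the dichotomy.} Your Steps~1 and~2 are sound. Step~1 is essentially Lemma~\ref{Lemma 6}, and the bound on $Z_k$ only uses $\beta_k\to0$, which holds for any complete orthonormal sequence. Step~3 also works when $\lim_{|u|\to\infty}\sigma(u)>0$. The problem is the remaining case, $\sigma\equiv0$ (so $f(u)=\ell u$) with $\lim_{|u|\to\infty}\xi(u)>0$. There every $Y_k$ must meet $H^1_0(\Omega)$ only at $0$, and your justification for eigenfunctions of \eqref{1.5} is false.

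Unique continuation applies to a \emph{single} eigenfunction, since zero trace plus the boundary equation give zero Cauchy data on $\Gamma_1$. A combination of eigenfunctions with different eigenvalues satisfies no such equation. Concretely, take $\Omega=\{1<|x|<2\}\subset\R^2$ and $\Gamma_0=\{|x|=1\}$. Two radial eigenfunctions $R_1(|x|)$, $R_2(|x|)$ with distinct eigenvalues both have $R_i(2)\neq0$, because $R_i(2)=0$ together with $R_i'(2)=\lambda_iR_i(2)$ would force $R_i\equiv0$. Hence $w=R_2(2)R_1(|x|)-R_1(2)R_2(|x|)$ is a nonzero element of $H^1_0(\Omega)$ lying in $Y_k$ for all large $k$.

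This is fatal, not technical. By \eqref{3.12}, $I(tw)=\tfrac{t^2}2\bigl(\|\nabla w\|_2^2-\ell\|w\|_2^2\bigr)\ge\tfrac{t^2}2(\lambda_1-\ell)\|w\|_2^2>0$ for all $t\neq0$. So $\max_{Y_k\cap\{\|u\|=\rho\}}I>0$ for every $\rho$, and the Fountain hypothesis on $Y_k$ fails. This is exactly the obstruction the paper points out in \S~\ref{section 4.1}.

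\textbf{Repairing it, and how the paper proceeds.} Your fallback (``replace the basis'') is therefore the whole content of case B, and it has to be carried out. It can be done as follows.
Set $V_{n-1}=\mathrm{span}\{d_1,\dots,d_{n-1}\}+H^1_0(\Omega)$. This is a closed subspace of infinite codimension, because $H^1/H^1_0(\Omega)\cong H^1_{\Gamma_0}(\Gamma)$, so it is nowhere dense. You can therefore choose $d_n\notin V_{n-1}$ within $1/n$ of the $n$-th term of a sequence that visits each point of a countable dense set infinitely often. After Gram--Schmidt, $\bigcup_kY_k=\mathrm{span}\{d_n\}$ is dense and meets $H^1_0(\Omega)$ only at $0$. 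Then $\|\cdot\|_{q_0,\Gamma_1}$ is a norm on each $Y_k$, and your Step~3 together with the Fountain Theorem gives a unified proof of both cases.

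The paper instead keeps the $\Z_2$--Mountain Pass Theorem, which requires anticoercivity on \emph{every} finite-dimensional subspace; this holds in case A by Lemma~\ref{Lemma 5}. In case B it passes to the closed subspace $\mathbb{E}^1$ of weak solutions of $-\Delta u=\ell u$ in $\Omega$. This subspace meets $H^1_0(\Omega)$ only at $0$ and is a natural constraint by \eqref{4.27}. The theorem is then applied to $\Phi=I\circ\mathbb{D}$ on $H^1_{\Gamma_0}(\Gamma)$. Your repaired route avoids the extension operator $\mathbb{D}$ and the (PS) transfer of Lemma~\ref{Lemma 14}, at the price of a non-canonical basis and the Fountain Theorem.
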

Theorem~\ref{Theorem 3} will be proved by carefully distinguishing between two different cases, in which different arguments have to be used. To understand them, we point out that, since $f$ and $g$ are odd, the functions $f(u)/$ and $g(u)/u$ in assumption (A2) are even. Consequently, in this case we have
\begin{equation}\label{1.24}
\ell:=\ell^+=\ell^-<\lambda_1,\qquad\text{and}\quad \mathfrak{m}:=\mathfrak{m}^+=\mathfrak{m}^-<\lambda_1.
\end{equation}
Hence the functions $h$ and $k$ in \eqref{1.7} are odd, while $\sigma$ and  $\xi$ in assumption (A4) are even. Consequently we have
$\lim\limits_{u\to\infty} \sigma(u)=\lim\limits_{u\to -\infty} \sigma(u)$ and $\lim\limits_{u\to\infty} \xi(u)=\lim\limits_{u\to -\infty} \xi(u)$. Hence assumption \eqref{1.9} simplifies to asking that $\lim\limits_{|u|\to\infty} \sigma(u)>0$
or $\lim\limits_{|u|\to\infty} \xi(u)>0$.

We can then consider the following alternative: either $\lim\limits_{|u|\to\infty} \sigma(u)>0$,
  or $\sigma\equiv h\equiv H\equiv 0$ in $\R$ and $\lim\limits_{|u|\to\infty} \xi(u)>0$.
In the first case we are going to apply  the $\Z_2$--version of the Mountain Pass Theorem to the functional $I$, while in the second one we are going to apply it in a different variational setting, as done in \cite{MP} in a simpler case. See \S~\ref{section 4.2} below.

Finally, we are going to further characterize the potential--well depth $d$, and the weak solutions at this energetic level, when the couple $(f,g)\in C(\R^2;\R^2)$ is odd and positively homogeneous. It is straightforward to recognize that these two further assumptions on $(f,g)$ reduces to assuming that $f$ and $g$ are given by \eqref{1.10}, with $\delta=0$, or $\gamma=0$, or $\gamma,\delta>0$ and $p=q$. Since the statement we are going to presents depends on the specific case occurring, we formalize the combination between
(A1--4) and these two further assumptions  as follows.
\renewcommand{\labelenumi}{{(A5)}}
\begin{enumerate}
\item  We suppose that one of the following alternative assumptions holds:
\renewcommand{\labelenumii}{{(A5.\arabic{enumii})}}
\begin{enumerate}
\item $f(u)=\gamma |u|^{p-2}u$, $\gamma>0$, $2<p<\romega$, and $g\equiv 0$;
\item $f\equiv 0$, $g(u)=\delta |u|^{q-2}u$, $\delta>0$, and $2<q<\rgamma$;
\item $f(u)=\gamma |u|^{p-2}u$, \quad $g(u)=\delta |u|^{p-2}u$, $\gamma, \delta>0$, and $2<p<\romega$.
\end{enumerate}
\end{enumerate}
As we already noticed, assumption (A5) yields (A1--4).  We also notice that, since $\romega<\rgamma$, assumption (A5.3) is in agreement with assumption (A1), since  $p=q<\rgamma$.

Before distinguishing the three cases above, we notice that
\begin{equation}\label{1.25}
\|u\|_{H^1}^2:=\int_\Omega |\nabla u|^2+\int_{\Gamma_1} |\nabla_\Gamma u|_\Gamma^2,\qquad u\in H^1,
\end{equation}
defines on $H^1$ a norm $\|\cdot\|_{H^1}$ equivalent to the norm inherited by the space $H^1$ from the product $H^1(\Omega)\times H^1(\Gamma_1)$, see Lemma~\ref{Lemma 1} below.

When assumption (A5.1) holds, we introduce the norm of the Sobolev Embedding operator $H^1\to L^p(\Omega)$, that is
\begin{equation}\label{1.26}
  B_\Omega:=\sup_{u\in H^1\setminus\{0\}} \dfrac{\|u\|_{L^p(\Omega)}}{\|u\|_{H^1}},
\end{equation}
so that one has
\begin{equation}\label{1.27}
\|u\|_{L^p(\Omega)}\le B_\Omega \|u\|_{H^1}\quad\text{for all $u\in H^1$.}
\end{equation}
\begin{thm}\label{Theorem 4} Let assumption (A5.1) hold and set the positive constants
\begin{equation}\label{1.28}
 \omega_1:=\gamma^{- 1/(p-2)}B_\Omega ^{-p/(p-2)},\qquad\text{and}\quad  \omega_2=B_\Omega\omega_1= \gamma^{- 1/(p-2)}B_\Omega ^{-2/(p-2)}.
\end{equation}
Then we have
\begin{equation}\label{1.29}
 d=\left(\tfrac 12 -\tfrac 1p\right)\omega_1^2=\left(\tfrac 12 -\tfrac 1p\right)\gamma\,\omega_2^p.
\end{equation}
Moreover, if $u$ is a lowest energy nontrivial weak solution of \eqref{1.1}, we  have
\begin{equation}\label{1.30}
\|u\|_{H^1}=\omega_1,\qquad\text{and}\quad \|u\|_{L^p(\Omega)}=\omega_2,
\end{equation}
so that $u$ solves the maximization problem
\begin{equation}\label{1.31}
\max_{u\in H^1\setminus\{0\}}\dfrac{\|u\|_{L^p(\Omega)}}{\|u\|_{H^1}}.
\end{equation}
Conversely,  if $u$ is a solution of \eqref{1.31}, then there is a positive constant $\tau$ such that $v=\tau u$ is a lowest energy nontrivial weak solution of \eqref{1.1}.
\end{thm}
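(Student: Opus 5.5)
Under (A5.1) we have $F(u)=\tfrac{\gamma}{p}|u|^p$ and $G\equiv 0$, so for $u\in H^1\setminus\{0\}$ and $\lambda>0$
\[
I(\lambda u)=\tfrac{\lambda^2}{2}\|u\|_{H^1}^2-\tfrac{\gamma\lambda^p}{p}\|u\|_{L^p(\Omega)}^p .
\]
The plan is to compute $\sup_{\lambda>0}I(\lambda u)$ explicitly, then take the infimum in \eqref{1.19} using \eqref{1.26}, and finally read off the properties of minimizers from Theorem~\ref{Theorem 2}.

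\textbf{Step 1: the supremum along rays.} The function $\lambda\mapsto I(\lambda u)$ has a unique maximum point $\lambda_u$, characterized by $\lambda_u^{p-2}=\|u\|_{H^1}^2/(\gamma\|u\|_{L^p(\Omega)}^p)$. Moreover $\lambda_u u\in\mathcal N$, since $\mathcal K(\lambda u)=\lambda\frac{d}{d\lambda}I(\lambda u)$. A direct computation gives
\[
\sup_{\lambda>0}I(\lambda u)=\left(\tfrac12-\tfrac1p\right)\gamma^{-2/(p-2)}\left(\frac{\|u\|_{H^1}}{\|u\|_{L^p(\Omega)}}\right)^{2p/(p-2)} .
\]
This quantity is a decreasing function of the ratio $R(u)=\|u\|_{L^p(\Omega)}/\|u\|_{H^1}$.

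\textbf{Step 2: the value of $d$.} Taking the infimum over $u$ replaces $R(u)$ by its supremum $B_\Omega$. Hence
\[
d=\left(\tfrac12-\tfrac1p\right)\gamma^{-2/(p-2)}B_\Omega^{-2p/(p-2)}=\left(\tfrac12-\tfrac1p\right)\omega_1^2,
\]
and the identity $\omega_1^2=\gamma\omega_2^p$ follows from \eqref{1.28} by checking exponents. This proves \eqref{1.29}.

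\textbf{Step 3: lowest energy solutions.} Let $u$ be a lowest energy nontrivial weak solution. By Theorem~\ref{Theorem 2}, $u\in\mathcal N$ and $I(u)=d$. On $\mathcal N$ we have $\|u\|_{H^1}^2=\gamma\|u\|_{L^p(\Omega)}^p$, and therefore $I(u)=(\tfrac12-\tfrac1p)\|u\|_{H^1}^2$. Thus $I(u)=d$ yields $\|u\|_{H^1}=\omega_1$, and then $\|u\|_{L^p(\Omega)}^p=\omega_1^2/\gamma=\omega_2^p$, which is \eqref{1.30}. The ratio is then $R(u)=\omega_2/\omega_1=B_\Omega$, so $u$ attains the supremum in \eqref{1.26}, i.e.\ solves \eqref{1.31}. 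That the maximum is attained at all follows from the existence of $u$ given by Theorem~\ref{Theorem 1}.

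\textbf{Step 4: the converse.} Let $u$ be a maximizer of \eqref{1.31} and set $\tau=\lambda_u>0$, so that $v=\tau u\in\mathcal N$. By Step 1, $I(v)=\sup_{\lambda>0}I(\lambda u)$, and this equals $d$ by Step 2 because $R(u)=B_\Omega$. Hence $v$ satisfies \eqref{1.23}, and by Theorem~\ref{Theorem 2} it is a lowest energy nontrivial weak solution.

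The only point requiring care is the exponent bookkeeping identifying $\omega_1$ and $\omega_2$ in \eqref{1.28}. The substantive ingredients, existence at level $d$ and the characterization through \eqref{1.23}, are supplied by Theorems~\ref{Theorem 1} and~\ref{Theorem 2}.
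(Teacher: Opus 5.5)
Your proposal is correct and follows essentially the paper's route: $d$ is obtained by maximizing $I$ explicitly along rays and then taking the infimum through $B_\Omega$, while the properties of lowest energy solutions and the converse both come from Theorem~\ref{Theorem 2}. The only difference is that you skip Lemma~\ref{Lemma 15}, whose full equivalences \eqref{5.2} on the sublevel set $\{I\le d\}$ the paper proves and then specializes to $\mathcal K=0$. Instead you use the Nehari identity $I=(\tfrac12-\tfrac1p)\|u\|_{H^1}^2$ on $\mathcal N$, and you take $\tau=\lambda_u$ (which equals the paper's $\omega_1/\|u\|_{H^1}$), so that $v\in\mathcal N$ follows directly rather than through \eqref{5.5}.
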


When assumption (A5.2) holds, we introduce the norm of the Trace -- Sobolev  operator $u\mapsto u_{|\Gamma}$ from $H^1$ to $L^q(\Gamma_1)$,  that is
\begin{equation}\label{1.32}
  B_\Gamma:=\sup_{u\in H^1\setminus\{0\}}\dfrac{\|u\|_{L^q(\Gamma_1)}}{\|u\|_{H^1}},
\end{equation}
so that one has
\begin{equation}\label{1.33}
\|u\|_{L^q(\Gamma_1)}\le B_\Gamma \|u\|_{H^1}\quad\text{for all $u\in H^1$.}
\end{equation}
\begin{thm}\label{Theorem 5} Let assumption (A5.2) hold and set the positive constants
\begin{equation}\label{1.34}
 \kappa_1:=\delta^{- 1/(q-2)}B_\Gamma ^{-q/(q-2)},\qquad\text{and}\quad  \kappa_2=B_\Gamma\kappa_1= \delta^{- 1/(q-2)}B_\Gamma ^{-2/(q-2)}.
\end{equation}
Then we have
\begin{equation}\label{1.35}
 d=\left(\tfrac 12 -\tfrac 1q\right)\kappa_1^2=\left(\tfrac 12 -\tfrac 1q\right)\delta \,\kappa_2^q.
\end{equation}
Moreover, if $u$ is a lowest energy nontrivial weak solution of \eqref{1.1}, we  have
\begin{equation}\label{1.36}
\|u\|_{H^1}=\kappa_1,\qquad\text{and}\quad \|u\|_{L^q(\Omega)}=\kappa_2,
\end{equation}
so that $u$ solves the maximization problem
\begin{equation}\label{1.37}
\max_{u\in H^1\setminus\{0\}}\dfrac{\|u\|_{L^q(\Gamma_1)}}{\|u\|_{H^1}}.
\end{equation}
Conversely,  if $u$ is a solution of \eqref{1.37}, then there is a positive constant $\tau$ such that $v=\tau u$ is a lowest energy nontrivial weak solution of \eqref{1.1}.
\end{thm}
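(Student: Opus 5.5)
The plan mirrors the one for Theorem~\ref{Theorem 4}, with the interior norm $\|\cdot\|_{L^p(\Omega)}$ replaced by the boundary norm $\|\cdot\|_{L^q(\Gamma_1)}$. Under (A5.2) we have $F\equiv0$ and $G(u)=\delta|u|^q/q$. Hence, in the norm \eqref{1.25},
\[
I(u)=\tfrac12\|u\|_{H^1}^2-\tfrac{\delta}{q}\|u\|_{L^q(\Gamma_1)}^q,\qquad \mathcal K(u)=\|u\|_{H^1}^2-\delta\|u\|_{L^q(\Gamma_1)}^q .
\]

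\textbf{Step 1: the fibering computation.} Fix $u\in H^1\setminus\{0\}$ with $u_{|\Gamma_1}\neq0$. The map $\lambda\mapsto I(\lambda u)$ on $(0,\infty)$ has a unique maximum point $\lambda_u$, characterized by $\lambda_u^{q-2}=\|u\|_{H^1}^2/(\delta\|u\|_{L^q(\Gamma_1)}^q)$. A direct calculation gives
\[
\sup_{\lambda>0}I(\lambda u)=\left(\tfrac12-\tfrac1q\right)\delta^{-2/(q-2)}\left(\frac{\|u\|_{H^1}}{\|u\|_{L^q(\Gamma_1)}}\right)^{2q/(q-2)}.
\]
If instead $u_{|\Gamma_1}=0$, the supremum is $+\infty$. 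Taking the infimum over $u$ in \eqref{1.19}, and using that the quotient is bounded above by $B_\Gamma$ through \eqref{1.32}, we get $d=(\frac12-\frac1q)\delta^{-2/(q-2)}B_\Gamma^{-2q/(q-2)}=(\frac12-\frac1q)\kappa_1^2$. The second expression in \eqref{1.35} then follows from $\kappa_2=B_\Gamma\kappa_1$ together with the identity $\delta\kappa_2^q=\kappa_1^2$, which is an algebraic check from \eqref{1.34}.

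\textbf{Step 2: properties of a lowest energy solution.} Let $u$ be a lowest energy nontrivial weak solution. By Theorem~\ref{Theorem 2}, $u\in\mathcal N$ and $I(u)=d$. From $\mathcal K(u)=0$ we get $I(u)=(\frac12-\frac1q)\|u\|_{H^1}^2$, hence $\|u\|_{H^1}=\kappa_1$. Next, $\delta\|u\|_{L^q(\Gamma_1)}^q=\kappa_1^2=\delta\kappa_2^q$, which gives the norm identity in \eqref{1.36}; note that this is the $L^q(\Gamma_1)$ norm, and the "$L^q(\Omega)$" written there is evidently a typo. Finally, the ratio $\|u\|_{L^q(\Gamma_1)}/\|u\|_{H^1}$ equals $\kappa_2/\kappa_1=B_\Gamma$, so $u$ attains the supremum in \eqref{1.32} and solves \eqref{1.37}.

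\textbf{Step 3: the converse.} Let $u$ solve \eqref{1.37}, so the ratio equals $B_\Gamma$ and in particular $u_{|\Gamma_1}\ne0$. Set $v=\lambda_u u$ with $\lambda_u$ from Step~1. Then $\mathcal K(v)=\frac{d}{d\lambda}I(\lambda u)|_{\lambda=\lambda_u}\cdot\lambda_u=0$, so $v\in\mathcal N$. Moreover, by the formula of Step~1 with the ratio equal to $B_\Gamma$, $I(v)=\sup_\lambda I(\lambda u)=d$. The converse part of Theorem~\ref{Theorem 2} then shows that $v$ is a lowest energy nontrivial weak solution, with $\tau=\lambda_u$.

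The only point needing care is the case $u_{|\Gamma_1}=0$ in Step~1, where the fibering supremum is infinite and must be excluded from the infimum; everything else is elementary algebra. Existence of a maximizer for \eqref{1.37} is not needed as a separate step: it follows from Theorem~\ref{Theorem 1} together with Step~2.
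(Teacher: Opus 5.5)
Your proposal is correct and follows essentially the paper's route. The fibering computation of $\sup_{\lambda>0}I(\lambda u)$, which the paper carries out through $\Upsilon_u$ from Lemma~\ref{Lemma 7} and \eqref{3.40bis}, gives \eqref{1.35}, and Theorem~\ref{Theorem 2} handles both directions, with your $\lambda_u$ coinciding with the paper's $\tau=\kappa_1/\|u\|_{H^1}$. The only difference is that the paper first proves the sublevel-set equivalences \eqref{5.7} of Lemma~\ref{Lemma 16} and then specializes to the equality case \eqref{5.10}, whereas you get $\|u\|_{H^1}=\kappa_1$ and $\|u\|_{L^q(\Gamma_1)}=\kappa_2$ directly from $\mathcal{K}(u)=0$ and $I(u)=d$ (and you are right that the $L^q(\Omega)$ in \eqref{1.36} should read $L^q(\Gamma_1)$).
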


When assumption (A5.3) holds, we introduce the product space $X_p:=L^p(\Omega)\times L^p(\Gamma_1)$, endowed with
the norm $\|\cdot\|_{X_p}$  defined by
\begin{equation}\label{1.38}
\|(u,v)\|_{X_p}^p:=\gamma \int_\Omega |u|^p+\delta \int_{\Gamma_1} |v|^p,
\end{equation}
which is trivially equivalent to the standard product  one since $\gamma,\delta>0$. We also introduce the norm of
the Sobolev Embedding operator $H^1\to X_p$, that is
\begin{equation}\label{1.39}
  B_p:=\sup_{u\in H^1\setminus\{0\}} \dfrac{\|(u,u_{|\Gamma})\|_{X_p}}{\|u\|_{H^1}},
\end{equation}
so that one has
\begin{equation}\label{1.40}
\|(u,u_{|\Gamma})\|_{X_p}\le B_p \|u\|_{H^1}\quad\text{for all $u\in H^1$.}
\end{equation}
\begin{thm}\label{Theorem 6} Let assumption (A5.3) hold and set the positive constants
\begin{equation}\label{1.41}
 \zeta_1:=B_p^{-p/(p-2)},\qquad\text{and}\quad  \zeta_2=B_p\zeta_1= B_p ^{-2/(p-2)}.
\end{equation}
Then we have
\begin{equation}\label{1.42}
 d=\left(\tfrac 12 -\tfrac 1p\right)\zeta_1^2=\left(\tfrac 12 -\tfrac 1p\right)\zeta_2^p.
\end{equation}
Moreover, if $u$ is a lowest energy nontrivial weak solution of \eqref{1.1}, we  have
\begin{equation}\label{1.43}
\|u\|_{H^1}=\zeta_1\qquad\text{and}\quad \|(u,u_{|\Gamma})\|_{X_p}=\zeta_2,
\end{equation}
so that $u$ solves the maximization problem
\begin{equation}\label{1.44}
\max_{u\in H^1\setminus\{0\}}\dfrac{\|(u,u_{|\Gamma})\|_{X_p}}{\|u\|_{H^1}}.
\end{equation}
Conversely,  if $u$ is a solution of \eqref{1.44}, then there is a positive constant $\tau$ such that $v=\tau u$ is a lowest energy nontrivial weak solution of \eqref{1.1}.
\end{thm}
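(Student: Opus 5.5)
Under (A5.3) we have $F(u)=\gamma|u|^p/p$ and $G(u)=\delta|u|^p/p$, so for every $u\in H^1$
\[
I(u)=\tfrac12\|u\|_{H^1}^2-\tfrac1p\|(u,u_{|\Gamma})\|_{X_p}^p,\qquad \mathcal K(u)=\|u\|_{H^1}^2-\|(u,u_{|\Gamma})\|_{X_p}^p .
\]
The plan is to copy the homogeneous argument used for Theorems~\ref{Theorem 4} and \ref{Theorem 5}, with the single norm $\|(u,u_{|\Gamma})\|_{X_p}$ in place of $\|u\|_{L^p(\Omega)}$ or $\|u\|_{L^q(\Gamma_1)}$.

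\textbf{Computing $d$.} Fix $u\ne0$ and write $a=\|u\|_{H^1}$ and $b=\|(u,u_{|\Gamma})\|_{X_p}$. Note that $b>0$, because $u\neq0$ and $H^1$ embeds injectively in $X_p$. Maximizing $\lambda\mapsto\frac{\lambda^2}2a^2-\frac{\lambda^p}pb^p$ over $\lambda>0$ gives the maximizer $\lambda_*^{p-2}=a^2/b^p$ and
\[
\sup_{\lambda>0}I(\lambda u)=\left(\tfrac12-\tfrac1p\right)\left(\frac{a}{b}\right)^{2p/(p-2)} .
\]
Taking the infimum over $u\ne0$ and using \eqref{1.39} yields $d=(\frac12-\frac1p)B_p^{-2p/(p-2)}=(\frac12-\frac1p)\zeta_1^2$. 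The identity $\zeta_1^2=\zeta_2^p$ follows from $\zeta_2=B_p\zeta_1$: both sides equal $B_p^{-2p/(p-2)}$. This proves \eqref{1.42}.

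\textbf{Lowest energy solutions.} Let $u$ be a lowest energy nontrivial weak solution. By Theorems~\ref{Theorem 1} and \ref{Theorem 2} we have $u\in\mathcal N$ and $I(u)=d$. Hence $a^2=b^p$ and $I(u)=(\frac12-\frac1p)a^2=d$, so $a=\zeta_1$ and $b^p=\zeta_1^2=\zeta_2^p$, which is \eqref{1.43}. Then $b/a=\zeta_2/\zeta_1=B_p$, so $u$ attains the supremum in \eqref{1.39}, i.e.\ it solves \eqref{1.44}.

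\textbf{Converse.} Let $u$ solve \eqref{1.44}, so $b/a=B_p$. Choose $\tau=\lambda_*>0$ as in the first step, so that $v=\tau u$ satisfies $\mathcal K(v)=0$. By the computation above,
\[
I(v)=\sup_{\lambda>0}I(\lambda u)=\left(\tfrac12-\tfrac1p\right)B_p^{-2p/(p-2)}=d .
\]
Thus $v$ satisfies \eqref{1.23}, and the converse part of Theorem~\ref{Theorem 2} shows that $v$ is a lowest energy nontrivial weak solution.

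No step is a real obstacle once Theorems~\ref{Theorem 1} and \ref{Theorem 2} are available; they supply the existence of a minimizer, i.e.\ that the supremum in \eqref{1.39} is attained. The only point needing care is checking that $I$ and $\mathcal K$ take exactly the forms displayed at the start, i.e.\ that the weights $\gamma,\delta$ in $\|\cdot\|_{X_p}$ from \eqref{1.38} match those in $F$ and $G$. They do, since $F(u)=\gamma|u|^p/p$ and $G(u)=\delta|u|^p/p$ carry the same $\gamma,\delta$ as \eqref{1.38}.
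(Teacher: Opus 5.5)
Your proof is correct and follows essentially the paper's route: $d$ comes from the explicit fiber maximum $\sup_{\lambda>0}I(\lambda u)=(\tfrac12-\tfrac1p)\bigl(\|u\|_{H^1}/\|(u,u_{|\Gamma})\|_{X_p}\bigr)^{2p/(p-2)}$ (the paper obtains it through $\Upsilon_u$ of Lemma~\ref{Lemma 7}); the direct part then uses Theorem~\ref{Theorem 2} ($u\in\mathcal N$, $I(u)=d$), and the converse rescales onto $\mathcal N$ and invokes the converse part of Theorem~\ref{Theorem 2}. The only differences are minor. You use just the equality case $\mathcal K(u)=0$, $I(u)=d$ rather than the full chain of equivalences in Lemma~\ref{Lemma 17}, and your $\tau=\lambda_*$ coincides with the paper's $\tau=\zeta_1/\|u\|_{H^1}$.
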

\begin{rem}\label{Remark 1}
Clearly, Theorems~\ref{Theorem 4}, \ref{Theorem 5} and \ref{Theorem 6} look as variants of the same result. We present them in this separate form for the sake of clearness. Moreover, the first part of Theorem~\ref{Theorem 5}, when $\delta=1$, clearly reduces to \cite[Theorem~2]{MP}.
\end{rem}

The paper is organized as follows: in Section~\ref{section 2} we shall give all preliminaries needed in the paper.  Section~\ref{section 3} will be devoted to prove Theorems~\ref{Theorem 1} and \ref{Theorem 2}.  In Section~\ref{section 4} we shall prove Theorem~\ref{Theorem 3}, while Theorems~\ref{Theorem 4}, \ref{Theorem 5} and \ref{Theorem 6} will be proved in Section~ \ref{section 5}.

\section{Preliminaries} \label{section 2}
\subsection{Notation.}\label{section 2.1}
We shall adopt the standard notation for
(real) Lebesgue and Sobolev spaces in $\Omega$, referring to
\cite{adams}. For simplicity we shall denote by $\|\cdot\|_{\vartheta}$, for $1\le \vartheta\le \infty$, the norms in $L^\vartheta(\Omega)$ and in $L^\vartheta(\Omega;\R^N)$.

Given a Banach space
$X$ we shall denote by  $X'$  its dual and by $\langle
\cdot,\cdot\rangle_X$ the duality product between them. Moreover, we shall use the standard notation for $X$--valued Lebesgue and Sobolev spaces in a real interval. When another Banach space $Y$ is given we shall denote by
$\mathcal{L}(X,Y)$  the space of bounded linear operators between $X$ and $Y$, and by $\|\cdot\|_{\mathcal{L}(X,Y)}$ the standard norm on it.

\subsection{Function spaces and Riemannian operators on $\Gamma$.} \label{section 2.2}
As we already noticed, Lebesgue spaces on $\Gamma$ and $\Gamma_1$ will be intended with respect to  $\mathcal{H}^{N-1}$, and for simplicity we shall denote, for $1\le \vartheta\le \infty$,
$\|\cdot\|_{\vartheta,\Gamma}=\|\cdot\|_{L^\vartheta(\Gamma)}$ and $\|\cdot\|_{\vartheta,\Gamma_1}=\|\cdot\|_{L^\vartheta(\Gamma_1)}$.

 Sobolev spaces on $\Gamma$ and on its relatively open subsets are classical objects, and  we shall use the standard notation for them. We refer to \cite{grisvard} for their definition in the present case in which $\Gamma$ is merely $C^1$.

  Since $\Gamma$ is $C^1$,  it inherits from $\R^N$ the structure of a Riemannian $C^1$ manifold, see \cite{sternberg}, so in the sequel we shall use some notation of geometric nature, which is quite common when $\Gamma$ is smooth,  see \cite{Boothby, hebey, jost, taylor}, and which can be easily extended to the $C^1$ case, see for example \cite{mugnvit}.  Moreover, since $\Gamma_1$ is relatively open on $\Gamma$, this notation will apply (by restriction) to $\Gamma_1$, without further mention.

We shall denote by $T(\Gamma)$ and $T^*(\Gamma)$ the tangent and cotangent bundles, and  by $(\cdot,\cdot)_\Gamma$ the Riemannian metric inherited from $\R^N$, given in local coordinates by $(u,v)_\Gamma=g_{ij}u^i v^j$ for all $u,v\in T(\Gamma)$
(here and in the sequel the summation convention being in use). The metric induces the fiber--wise defined musical isomorphisms $\flat:T(\Gamma)\to T^*(\Gamma)$ and $\sharp=\flat^{-1}:T^*(\Gamma)\to T(\Gamma)$ defined by
$\langle \flat u,v\rangle_{T(\Gamma)}=(v,u)_\Gamma$ for $u,v\in T(\Gamma)$,
where $\langle \cdot,\cdot\rangle_{T(\Gamma)}$ denotes the fiber-wise defined duality pairing.
The induced bundle metric on $T^*(\Gamma)$, still denoted by $(\cdot,\cdot)_\Gamma$, is then defined  by
the formula $(\omega,\varphi)_\Gamma=\langle \omega,\sharp\varphi\rangle_{T(\Gamma)}$ for all $\omega,\varphi\in T^*(\Gamma)$, so that
\begin{equation}\label{2.1}
  (\omega,\varphi)_\Gamma=(\sharp \varphi, \sharp\omega)_\Gamma,\qquad \text{for all $\omega,\varphi\in T^*(\Gamma)$}.
\end{equation}
By $|\cdot|_\Gamma^2= (\cdot,\cdot)_\Gamma$  we shall denote the associated bundle norms on $T(\Gamma)$ and $T^*(\Gamma)$.

Denoting by $d_\Gamma$ the standard differential on $\Gamma$, the Riemannian gradient operator $\nabla_\Gamma$ is defined  by
setting, for $u\in C^1(\Gamma)$ and thus by density for $u\in H^1(\Gamma)$,
$\nabla_\Gamma u=\sharp d_\Gamma u$,
so $\nabla_\Gamma u=g^{ij}\partial_ju\partial_i$
 in local coordinates, where $(g^{ij})=(g_{ij})^{-1}$. By \eqref{2.1} one trivially gets that
$(\nabla_\Gamma u,\nabla_\Gamma v)_\Gamma=(d_\Gamma u,d_\Gamma v)_\Gamma$ for all $u,v\in H^1(\Gamma)$, so in the sequel the use of vectors or forms is optional.

It is well known, see for example \cite[Chapter 3]{mugnvit}, that $H^1(\Gamma)$ can be equipped with the equivalent norm $\|\cdot\|_{H^1(\Gamma)}$ given by
\begin{equation}\label{2.2}
\|u\|_{H^1(\Gamma)}^2=\|u\|_{2,\Gamma}^2+\|\nabla_\Gamma u\|_{2,\Gamma}^2,\quad\text{where}\quad \|\nabla_\Gamma u\|_{2,\Gamma}^2:=\int_\Gamma |\nabla_\Gamma u|_\Gamma^2.
\end{equation}
In the sequel we shall also deal with the closed subspace of $H^1(\Gamma)$
\begin{equation}\label{2.3}
H^1_{\Gamma_0}(\Gamma)=\{u\in H^1(\Gamma): u=0\quad\text{a.e. on $\Gamma_0$}\}
\end{equation}
endowed with the  norm $\|\cdot\|_{H^1(\Gamma)}$, which is then a Hilbert space. Since
$\nabla_\Gamma u=0$ a.e. on $\Gamma\setminus\overline{\Gamma_1}$  for all $u\in H^1_{\Gamma_0}(\Gamma)$,   and  since $\mathcal{H}^{N-1}(\overline{\Gamma_0}\cap\overline{\Gamma_1})=0$, we have
\begin{equation}\label{2.4}
\|u\|_{H^1(\Gamma)}^2=\|u\|_{2,\Gamma_1}^2+\|\nabla_\Gamma u\|_{2,\Gamma_1}^2\qquad\text{for all $u\in H^1_{\Gamma_0}(\Gamma)$,}
\end{equation}
where $\|\nabla_\Gamma u\|_{2,\Gamma_1}^2:=\int_{\Gamma_1} |\nabla_\Gamma u|_\Gamma^2$.

\begin{rem}\label{Remark 2} Although the definition of the space $H^1_{\Gamma_0}(\Gamma)$ given above is adequate for our purpose, we would like to point out two characterizations of it in two different geometrical situations.
\renewcommand{\labelenumi}{{\roman{enumi})}}
\begin{enumerate}
\item When $\overline{\Gamma_0}\cap\overline{\Gamma_1}=\emptyset$  both $\Gamma_0$ and $\Gamma_1$ are relatively open. Hence, by identifying the elements of $H^1(\Gamma_i)$, $i=0,1$, with their trivial extensions to $\Gamma$, one easily gets the decomposition $H^1(\Gamma)=H^1(\Gamma_0)\oplus H^1(\Gamma_1)$. Consequently  $H^1_{\Gamma_0}(\Gamma)$ can be isometrically identified with $H^1(\Gamma_1)$, as one usually does.
\item  When $\overline{\Gamma_0}\cap\overline{\Gamma_1}\not=\emptyset$ such a characterization fails to hold.
       To show this fact, we claim that the characteristic function $\chi_{\Gamma_1}$ of $\Gamma_1$, defined on $\Gamma$ and considered (through its equivalence class) as an element of $L^2(\Gamma)$, does not belong to $H^1(\Gamma)$.
    Indeed, suppose by contradiction that $\chi_{\Gamma_1}\in H^1(\Gamma)$. Trivially $\nabla_\Gamma \chi_{\Gamma_1}=0$ a.e. on $\Gamma_1$ and, as proved above, $\nabla_\Gamma u=0$ a.e. on $\Gamma\setminus\overline{\Gamma_1}$. Since $\mathcal{H}^{N-1}(\overline{\Gamma_0}\cap\overline{\Gamma_1})=0$, we thus have $\nabla_\Gamma \chi_{\Gamma_1}=0$ a.e. on $\Gamma$. Since $\chi_{\Gamma_1}\in L^N(\Gamma)$ we then get $\chi_{\Gamma_1}\in W^{1,N}(\Gamma)$.
    Since $\Gamma$ is $C^1$, there is a sequence  $(\varphi_n)_n$ in $C^1(\Gamma)$
    such that $\varphi_n\to \chi_{\Gamma_1}$ in $W^{1,N}(\Gamma)$. Since $N>N-1$, by Morrey's Theorem, we then get that $\varphi_n\to \chi_{\Gamma_1}$ uniformly in $\Gamma$, so $\chi_{\Gamma_1}\in C(\Gamma)$, the desired contradiction.
    As a consequence, $\chi_{\Gamma_1}\not\in H^1_{\Gamma_0}(\Gamma)$.
    Trivially  the restriction of  $\chi_{\Gamma_1}$ to $\Gamma_1$  belongs to $H^1(\Gamma_1)$.

    In this case the elements of $H^1_{\Gamma_0}(\Gamma)$ ''vanish'' at the relative boundary $\partial\Gamma_1=\overline{\Gamma_0}\cap\overline{\Gamma_1}$ of $\Gamma_1$ on $\Gamma$, although such a notion can be made more precise only when $\partial\Gamma_1$ is regular enough. For example, when $\Gamma$ is smooth and $\overline{\Gamma_1}$ is a manifold with boundary $\partial\Gamma_1$,   see \cite[Chapter~4, \S 5, formula (5.1), p. 290]{taylor}, $H^1_{\Gamma_0}(\Gamma)$ is isometrically isomorphic to  the space
        $$H^1_0(\Gamma_1):=\overline{C^\infty_c(\Gamma_1)}^{\|\cdot\|_{H^1(\Gamma_1)}}.$$
\end{enumerate}
\end{rem}
The Laplace--Beltrami operator $\Delta_\Gamma$ can be defined in a geometrically elegant way by using $\nabla_\Gamma$ and the Riemannian divergence operator, as in \cite[\S~2.3]{mugnvit}, at least when $\Gamma$ is $C^2$. To avoid the necessity of introducing Sobolev spaces of tensor fields we shall adopt here a less elegant approach.
Indeed we set, when $\Gamma$ is $C^2$ and $u\in C^2(\Gamma')$, $\Gamma'\subset\Gamma$ relatively open,
\begin{equation}\label{2.5}
\Delta_{\Gamma} u=g^{-1/2}\partial_i( g^{1/2}g^{ij}\partial_j u), \quad\text{where $g=\det (g_{ij})$,}
\end{equation}
in local coordinates. Since $g$, $g^{ij}$ are continous and $\Gamma$ is compact, formula \eqref{2.5} extends by density to $u\in H^2(\Gamma)$, so defining an operator $-\Delta_\Gamma\in\mathcal{L}(H^2(\Gamma);L^2(\Gamma))$, which restricts to $-\Delta_\Gamma\in\mathcal{L}(H^2(\Gamma');L^2(\Gamma'))$ for relatively open subsets $\Gamma'$ of $\Gamma$. Since $\Gamma$ is compact, by \eqref{2.5}, integrating by parts and using a $C^2$ partition of the unity,  we get
\begin{equation}\label{2.6}
-\int_\Gamma \Delta_\Gamma u v=\int_\Gamma (\nabla_\Gamma u,\nabla_\Gamma v)_\Gamma\quad\text{for all $u\in H^2(\Gamma)$ and $v\in H^1(\Gamma)$.}
\end{equation}
Formula \eqref{2.6} motivates the definition of the operator $-\Delta_\Gamma\in\mathcal{L}(H^1(\Gamma);H^{-1}(\Gamma))$, also when $\Gamma$ is merely $C^1$, given by
\begin{equation}\label{2.7}
\langle -\Delta_\Gamma u,v\rangle _{H^1(\Gamma)}=\int_\Gamma (\nabla_\Gamma u, \nabla_\Gamma v)_\Gamma\quad\text{for all $u,v\in H^1(\Gamma)$.}
\end{equation}
By density, when $\Gamma$ is $C^2$, the so defined operator is the unique  extension of $-\Delta_\Gamma\in\mathcal{L}(H^2(\Gamma);L^2(\Gamma))$.

In \S~\ref{section 4} we shall deal with the realization of $-\Delta_\Gamma$ between the space $H^1_{\Gamma_0}(\Gamma)$ and its dual.
To motivate its definition we are now  going to briefly consider the two different cases pointed out in Remark~\ref{Remark 2}.

When $\overline{\Gamma_0}\cap\overline{\Gamma_1}=\emptyset$, so $\Gamma_1$ is compact, and $\Gamma$ is $C^2$, formula \eqref{2.6} holds  when $\Gamma$ is replaced by  $\Gamma_1$. In this case it is then natural to set  the operator $-\Delta_{\Gamma_1}\in\mathcal{L}(H^1(\Gamma_1);H^{-1}(\Gamma_1))$ like the operator $-\Delta_\Gamma$ above, that is by
\begin{equation}\label{2.8}
\langle -\Delta_{\Gamma_1} u,v\rangle _{H^1(\Gamma_1)}=\int_{\Gamma_1} (\nabla_\Gamma u, \nabla_\Gamma v)_\Gamma\quad\text{for all $u,v\in H^1(\Gamma_1)$.}
\end{equation}
When $\overline{\Gamma_0}\cap\overline{\Gamma_1}\not=\emptyset$, $\Gamma$ is smooth and $\overline{\Gamma_1}$ is a manifold with boundary $\partial\Gamma_1$, formula \eqref{2.6} does not hold anymore on $\Gamma_1$, since a boundary integral on $\partial\Gamma_1$ appears.
On the other hand, taking into account the homogeneous Dirichlet boundary condition in the space $H^1_0(\Gamma_1)$, it is natural to set $-\Delta^D_{\Gamma_1}\in\mathcal{L}(H^1_0(\Gamma_1);H^{-1}(\Gamma_1))$ by
\begin{equation}\label{2.9}
\langle -\Delta_{\Gamma_1}^D u,v\rangle _{H^1_0(\Gamma_1)}=\int_{\Gamma_1} (\nabla_\Gamma u, \nabla_\Gamma v)_\Gamma\quad\text{for all $u,v\in H^1_0(\Gamma_1)$,}
\end{equation}
where as usual $H^{-1}(\Gamma_1):=[H^1_0(\Gamma_1)]'$.
To include, as particular cases, the two operators $-\Delta_{\Gamma_1}$ and $-\Delta^D_{\Gamma_1}$ given by \eqref{2.8} and \eqref{2.9}, in the sequel we shall deal with the operator $-\Delta^D_{\Gamma_1}\in\mathcal{L}(H^1_{\Gamma_0}(\Gamma);[H^1_{\Gamma_0}(\Gamma)]')$
defined by
\begin{equation}\label{2.10}
\langle -\Delta^D_{\Gamma_1} u,v\rangle _{H^1_{\Gamma_0}(\Gamma)}=\int_{\Gamma_1} (\nabla_\Gamma u, \nabla_\Gamma v)_\Gamma\quad\text{for all $u,v\in H^1_{\Gamma_0}(\Gamma_1)$,}
\end{equation}
noticing that, by \eqref{2.7}, $-\Delta^D_{\Gamma_1}u=-(\Delta_{\Gamma}u)_{|H^1_{\Gamma_0}(\Gamma)}$ for all $u\in H^1_{\Gamma_0}(\Gamma)$.
\subsection{The space $H^1$.} \label{section 2.3} We recall, see \cite[Lemma 1, p. 2147]{vazvitHLB},  trivially extending to $\Gamma$ of class $C^1$, that the space
$$H^1(\Omega;\Gamma)=\{(u,v)\in H^1(\Omega)\times H^1(\Gamma): v=u_{|\Gamma}\},$$
endowed with the topology inherited from the product, can be identified, through the bijective isomorphism $(u,u_{|\Gamma})\mapsto u$, with the space $\{u\in H^1(\Omega): u_{|\Gamma}\in H^1(\Gamma)\}$ and equivalently equipped with the norm $\|\cdot\|_{H^1(\Omega,\Gamma)}$ given by
$$\|u\|^2_{H^1(\Omega,\Gamma)}:=\|\nabla u\|_2^2+\|\nabla_\Gamma u\|_{2,\Gamma}^2+\|u\|_{2,\Gamma}^2.$$
The identification made in \S~\ref{section 1.2} between the spaces $H^1$ and $H^1_{\Gamma_0}(\Omega,\Gamma)$, respectively defined  by \eqref{1.6} and \eqref{1.17}, is a simple consequence of the identification above.

By using formula \eqref{2.4}, we can equip  $H^1$ with the norm $\vertiii\cdot_{H^1}$ given by
\begin{equation}\label{2.11}
 \vertiii u_{H^1}^2= \|\nabla u\|_2^2+\|\nabla_\Gamma u\|_{2,\Gamma_1}^2+\|u\|_{2,\Gamma_1}^2.
\end{equation}
On the other hand, to get advantage of the connectedness of $\Omega$ and of the assumption $\mathcal{H}^{N-1}(\Gamma_0)>0$, made in the present paper, we point out the following well--known result, referring to \cite{MP}--\cite{MPCorrigendum} for a proof.
 \begin{lem}\label{Lemma 1} Let $\Omega$ be connected and $\mathcal{H}^{N-1}(\Gamma_0)>0$. Then, setting, for $u,v\in H^1$,
\begin{equation}\label{2.12}
  (u,v)_{H^1}=\int_\Omega\nabla u\nabla v+\int_{\Gamma_1}(\nabla_\Gamma u,\nabla_\Gamma v)_\Gamma \quad\text{and}\quad \|\cdot\|_{H^1}=(\cdot,\cdot)_{H^1}^{1/2},
\end{equation}
 $\|\cdot\|_{H^1}$ defines on $H^1$ a norm equivalent to  $\vertiii\cdot_{H^1}$.
\end{lem}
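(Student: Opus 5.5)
The plan is to prove the two inequalities between $\|\cdot\|_{H^1}$ and $\vertiii\cdot_{H^1}$ separately. First I check that $(\cdot,\cdot)_{H^1}$ is a genuine scalar product on $H^1$. The inequality $\|u\|_{H^1}\le\vertiii u_{H^1}$ is immediate from \eqref{2.11} and \eqref{2.12}, because $\vertiii u_{H^1}^2-\|u\|_{H^1}^2=\|u\|_{2,\Gamma_1}^2\ge0$. It therefore suffices to prove a Poincar\'e-type inequality: there is $C>0$ such that
\begin{equation*}
\|u\|_{2,\Gamma_1}^2\le C\left(\|\nabla u\|_2^2+\|\nabla_\Gamma u\|_{2,\Gamma_1}^2\right)\qquad\text{for all $u\in H^1$.}
\end{equation*}
This inequality also shows that $\|\cdot\|_{H^1}$ is a norm, not merely a seminorm.

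I would prove the inequality by the standard compactness-contradiction argument. Suppose it fails. Then there is a sequence $(u_n)_n$ in $H^1$ with $\vertiii{u_n}_{H^1}=1$ and $\|\nabla u_n\|_2^2+\|\nabla_\Gamma u_n\|_{2,\Gamma_1}^2\to0$. Through the identification in \S~\ref{section 2.3}, $H^1$ is a closed subspace of $H^1(\Omega)\times H^1(\Gamma)$, hence a Hilbert space, and $(u_n)_n$ is bounded in it. After passing to a subsequence, $u_n\rightharpoonup u$ weakly in $H^1$. By Rellich's theorem, $u_n\to u$ strongly in $L^2(\Omega)$ and, since the traces are bounded in $H^1(\Gamma)$, also in $L^2(\Gamma)$; this uses the compact embedding $H^1(\Gamma)\hookrightarrow L^2(\Gamma)$, valid because $\Gamma$ is a compact $C^1$ manifold. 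By weak lower semicontinuity, $\nabla u=0$ in $\Omega$. Since $\Omega$ is connected, $u$ equals a constant $c$ in $\Omega$, and so its trace is also $c$. Because $u\in H^1$, the trace vanishes a.e. on $\Gamma_0$, and $\mathcal{H}^{N-1}(\Gamma_0)>0$ forces $c=0$, so $u=0$. In particular $\|u_n\|_{2,\Gamma_1}\to 0$. This gives $\vertiii{u_n}_{H^1}^2=\|\nabla u_n\|_2^2+\|\nabla_\Gamma u_n\|_{2,\Gamma_1}^2+\|u_n\|_{2,\Gamma_1}^2\to0$, contradicting $\vertiii{u_n}_{H^1}=1$.

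The main point needing care is the compactness step for the boundary term. The cleanest route is to use only the trace-level compactness of $H^1(\Omega)\hookrightarrow L^2(\Gamma)$, which holds for bounded Lipschitz (a fortiori $C^1$) domains because the trace maps $H^1(\Omega)$ compactly into $L^2(\Gamma)$. This avoids any compactness argument on $\Gamma$ itself. I also need $\vertiii\cdot_{H^1}$ to be an admissible norm, which follows from \eqref{2.4} and the lemma of \cite{vazvitHLB}. Alternatively, one could skip the boundary gradient altogether: the classical inequality $\|u\|_{H^1(\Omega)}\le C\|\nabla u\|_2$ for functions vanishing on a boundary portion of positive measure, combined with continuity of the trace $H^1(\Omega)\to L^2(\Gamma)$, already bounds $\|u\|_{2,\Gamma_1}$ by $C\|\nabla u\|_2$. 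I would present this as the main argument and keep the contradiction proof only to justify that classical inequality.
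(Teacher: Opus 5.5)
Your proof is correct. The paper gives no argument for this lemma here; it calls it well known and defers to \cite{MP,MPCorrigendum}. Your self-contained proof follows the standard route: the trivial bound $\|u\|_{H^1}\le\vertiii{u}_{H^1}$, plus a Poincar\'e-type estimate for $\|u\|_{2,\Gamma_1}$ obtained by compactness.

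Of your two variants, the second is the more economical. It combines the Poincar\'e inequality $\|u\|_{H^1(\Omega)}\le C\|\nabla u\|_2$ on $\{u\in H^1(\Omega): u_{|\Gamma}=0 \text{ a.e. on } \Gamma_0\}$ with mere continuity of the trace into $L^2(\Gamma)$. It needs no compactness on $\Gamma$, and it shows that the tangential term $\|\nabla_\Gamma u\|_{2,\Gamma_1}$ plays no role in the equivalence.

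In both variants, connectedness enters exactly at the step $\nabla u=0\Rightarrow u$ constant, and it cannot be dropped. Suppose a component $\Omega'$ of $\Omega$ had $\partial\Omega'\subset\Gamma_1$. Then $u=\chi_{\Omega'}$ would lie in $H^1$: its trace $\chi_{\partial\Omega'}$ is locally constant on $\Gamma$ and vanishes on $\Gamma_0$. This $u$ is nonzero with $\|u\|_{H^1}=0$.
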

\subsection{Some results from Critical Point Theory} \label{section 2.4}
We now recall some well--known notions of Critical Point Theory (referring to \cite{ambrosettimalchiodi}) for a functional $\mathcal{I}\in C^1(E)=C^1(E;\R)$ on any Banach space $E$ with norm $\|\cdot\|_E$. By $d\mathcal{I}\in C(E;E')$ we shall denote the Fr\'{e}chet differential of $\mathcal{I}$. Moreover, when $E$ is a Hilbert space endowed with the scalar product $(\cdot,\cdot)_E$, by $\nabla\mathcal{I}\in C(E;E)$ we shall denote the gradient of $\mathcal{I}$, which is defined as follows (see also
\cite[Chapter~5,~\S~5.2,~p.78]{ambrosettimalchiodi}). By the Riesz Theorem, for any $u\in E$ one sets $\nabla\mathcal{I}(u)=w$, where $w\in E$ is the unique solution (in $E$) of the equation
\begin{equation}\label{2.13}
(w,v)_E=\langle d\mathcal{I}(u),v\rangle_E\qquad\text{for all $v\in E$.}
\end{equation}
Clearly, by the Riesz Theorem, we also have $\|\nabla \mathcal{I}(u)\|_E=\|d\mathcal{I}(u)\|_{E'}$.
In the sequel we shall also use the following terminology.
 \begin{definition}\label{Definition 0}
Let $\mathcal{I}\in C^1(E)$. We say that a sequence $(u_n)_n$ in $E$  is a
Palais--Smale (in short, (PS)) sequence if $(\mathcal{I}(u_n))_n$ is bounded and $d\mathcal{I}(u_n)\to 0$ in $E'$. We also say that $\mathcal{I}\in C^1(E)$ satisfies the (PS) condition if any (PS) sequence has a (strongly) convergent subsequence.
\end{definition}
\begin{rem}\label{Remark 3}Clearly, when $E$ is a Hilbert space, as seen above, $d\mathcal{I}(u_n)\to 0$ in $E'$ if and only if
$\nabla\mathcal{I}(u_n)\to 0$ in $E$.
\end{rem}
The following result is nothing but a well--known version of  the celebrated Mountain Pass Theorem, see \cite[Chapter 1, p. 4]{rabinowitz}.
\begin{thm}[\bf Mountain Pass Theorem, standard version]\label{Theorem 7} Let $\mathcal{I}\in C^1(E)$ satisfies the (PS) condition and
\renewcommand{\labelenumi}{{\roman{enumi})}}
\begin{enumerate}
\item $\mathcal{I}(0)=0$;
\item there are $\rho,\eta>0$ such that $\mathcal{I}(u)\ge \eta$ for all $u\in E$ such that $\|u\|_X=\rho$;
\item there is $l\in E$ such that $\|l\|_E>\rho$ and $\mathcal{I}(l)\leq 0$.
\end{enumerate}
Then $\mathcal{I}$ possesses a critical point $u\in E$ such that $\mathcal{I}(u)=c_l\ge \eta$, where
$$c_l=\inf_{\sigma\in \Sigma_l}\max_{t\in [0,1]}\mathcal{I}(\sigma(t)), \text{where } \Sigma_l=\{\sigma\in C([0,1];E): \sigma (0)=0, \sigma(1)=l\}.
$$
\end{thm}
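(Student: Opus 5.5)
This is the classical Mountain Pass Theorem of Ambrosetti and Rabinowitz, and I would prove it by contradiction via a deformation lemma.

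\emph{Step 1: bounds on $c_l$.} First I check that $c_l$ is finite and $c_l\ge\eta$. By iii) the set $\Sigma_l$ is nonempty, since the segment $t\mapsto tl$ belongs to it, so $c_l<\infty$. Any $\sigma\in\Sigma_l$ has $\|\sigma(0)\|_E=0$ and $\|\sigma(1)\|_E=\|l\|_E>\rho$. By continuity of $t\mapsto\|\sigma(t)\|_E$ there is $t_0$ with $\|\sigma(t_0)\|_E=\rho$. Hypothesis ii) then gives $\max_{[0,1]}\mathcal{I}\circ\sigma\ge\eta$, hence $c_l\ge\eta$.

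\emph{Step 2: setting up the contradiction.} Suppose $c_l$ is not a critical value, i.e.\ $K_{c_l}:=\{u: \mathcal{I}(u)=c_l,\ d\mathcal{I}(u)=0\}$ is empty. I first claim there are $\bar\varepsilon,\delta>0$ such that $\|d\mathcal{I}(u)\|_{E'}\ge\delta$ whenever $|\mathcal{I}(u)-c_l|\le\bar\varepsilon$. If not, there is a sequence $u_n$ with $\mathcal{I}(u_n)\to c_l$ and $d\mathcal{I}(u_n)\to0$. This is a (PS) sequence, so by the (PS) condition it has a subsequence converging to some $u$. By continuity of $\mathcal{I}$ and $d\mathcal{I}$ we get $u\in K_{c_l}$, a contradiction. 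I also take $\bar\varepsilon<\eta/2$, so that the points $0$ and $l$, where $\mathcal{I}\le 0<c_l-\bar\varepsilon$, lie outside the strip.

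\emph{Step 3: the deformation (main obstacle).} For $0<\varepsilon<\bar\varepsilon$ I want a continuous map $\eta_1:E\to E$ with $\eta_1(u)=u$ when $|\mathcal{I}(u)-c_l|\ge\bar\varepsilon$, and $\eta_1(\{\mathcal{I}\le c_l+\varepsilon\})\subset\{\mathcal{I}\le c_l-\varepsilon\}$. The difficulty is that $\mathcal{I}$ is only $C^1$ on a Banach space, so $d\mathcal{I}$ cannot be used directly as a Lipschitz vector field. I would use a locally Lipschitz pseudo-gradient vector field $V$ on $\tilde E=\{d\mathcal{I}\ne0\}$, built by a partition of unity subordinate to a locally finite refinement (paracompactness), satisfying
\[
\|V(u)\|_E\le2\|d\mathcal{I}(u)\|_{E'},\qquad \langle d\mathcal{I}(u),V(u)\rangle\ge\|d\mathcal{I}(u)\|_{E'}^2 .
\]
I then cut it off with a locally Lipschitz function $\chi$ that equals $1$ on $\{|\mathcal{I}-c_l|\le\varepsilon\}$ and $0$ outside $\{|\mathcal{I}-c_l|<\bar\varepsilon\}$. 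The flow is $\dot\phi=-\chi(\phi)V(\phi)/\|V(\phi)\|_E$, with $\phi(0)=u$. Its velocity is bounded, so the flow is global in time. Along the flow $\mathcal{I}$ is nonincreasing, and while $\phi$ stays in the strip $\{|\mathcal{I}-c_l|\le\varepsilon\}$ the energy decreases at rate at least $\delta/2$. Choosing $\varepsilon<\delta/4$ and $\eta_1=\phi(1,\cdot)$, every $u$ with $\mathcal{I}(u)\le c_l+\varepsilon$ ends with $\mathcal{I}(\eta_1(u))\le c_l-\varepsilon$. Indeed, it either leaves the strip $\{|\mathcal{I}-c_l|\le\varepsilon\}$ downward, or it spends unit time in the strip and loses more than $2\varepsilon$.

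\emph{Step 4: conclusion.} Pick $\sigma\in\Sigma_l$ with $\max\mathcal{I}\circ\sigma\le c_l+\varepsilon$. Since $\mathcal{I}(0),\mathcal{I}(l)\le0<c_l-\bar\varepsilon$, the deformation fixes the endpoints, so $\eta_1\circ\sigma\in\Sigma_l$. But $\max\mathcal{I}\circ(\eta_1\circ\sigma)\le c_l-\varepsilon$, contradicting the definition of $c_l$. Hence $c_l\ge\eta$ is a critical value, and there is a critical point $u$ with $\mathcal{I}(u)=c_l$. If one prefers to avoid the construction in Step 3, it can be replaced by Ekeland's variational principle applied to $\sigma\mapsto\max\mathcal{I}\circ\sigma$ on the complete metric space $\Sigma_l$, which directly yields a (PS) sequence at level $c_l$.
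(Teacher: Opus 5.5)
Your proof is correct. The paper gives no argument of its own for this statement: it quotes it from Rabinowitz's CBMS notes. That reference proves it exactly as you do, with the lower bound $c_l\ge\eta$ from the intermediate value argument on $t\mapsto\|\sigma(t)\|_E$, and a deformation lemma built from a locally Lipschitz pseudo-gradient flow in which the (PS) condition supplies the uniform bound $\|d\mathcal{I}\|_{E'}\ge\delta$ on a strip around $c_l$ and the choice $\bar\varepsilon<\eta/2$ keeps the endpoints $0$ and $l$ fixed.
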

Textbooks in Critical Point Theory usually do not point out (since, when looking for critical points, this remark would be not of interest) that the critical level $c_l$ above may depend on $l$. An explicit elementary example is given in \cite[Example 1]{MP}.
Since in this  paper we are interested in characterizing our critical level as the potential--well depth of the functional $\mathcal{I}$, we are now going to recall  a less known variant of the Mountain Pass Theorem under slighty more restrictive assumptions on the functional. They look similar, although not identical, to the assumptions in the first version of the Mountain Pass Theorem, that is in  \cite[Theorem 2.1, p. 354]{AmbrosettiRabinowitz}.
They are pointed out, without further detail, in \cite[Chapter~8,~\S~8.1,Remark~8.3]{ambrosettimalchiodi}.
A complete proof of the following result, based on Therem~\ref{Theorem 7}, can be found in \cite[Proof~of~Theorem~5,~p.819]{MP}.
\begin{thm}[\bf Mountain Pass Theorem, variant]\label{Theorem 8} Let $\mathcal{I}\in C^1(I)$ satisfies the (PS) condition, assumptions i)--iii) in Theorem~\ref{Theorem 7} and
\renewcommand{\labelenumi}{{\roman{enumi})}}
\begin{itemize}
\item[iv)] $\mathcal{I}(u)>0$ for all $u\in E$ such that $0<\|u\|_E\le\rho$.
\end{itemize}
Then $\mathcal{I}$ possesses a critical point $u\in E$ such that $\mathcal{I}(u)=c$, where $c\ge \eta$ is given by
\begin{equation}\label{2.14}
c=\inf_{\sigma\in \Sigma}\max_{t\in [0,1]}\mathcal{I}(\sigma(t)), \text{and } \Sigma=\{\sigma\in C([0,1];E): \sigma (0)=0,\, \mathcal{I}(\sigma(1))<0\}.
\end{equation}
\end{thm}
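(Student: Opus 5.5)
The plan is to deduce the result from the standard Mountain Pass Theorem~\ref{Theorem 7}, applied separately to each admissible endpoint, and then to compare the two minimax levels. Let $\Sigma$ and $c$ be as in \eqref{2.14}. If $\sigma\in\Sigma$ and $\|\sigma(1)\|_E\le\rho$, then either $\sigma(1)=0$, which gives $\mathcal{I}(\sigma(1))=0$, or $0<\|\sigma(1)\|_E\le\rho$, which gives $\mathcal{I}(\sigma(1))>0$ by iv). Both contradict $\mathcal{I}(\sigma(1))<0$. Hence every $\sigma\in\Sigma$ has $\|\sigma(1)\|_E>\rho$ and $\|\sigma(0)\|_E=0$. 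By continuity the path meets the sphere $\|u\|_E=\rho$, so ii) yields $\max_{[0,1]}\mathcal{I}\circ\sigma\ge\eta$. Therefore $c\ge\eta$.

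To see that $\Sigma\neq\emptyset$ and $c<\infty$, first note that the endpoint $l$ in iii) satisfies $\mathcal{I}(l)\le 0$ and $\|l\|_E>\rho$. If $\mathcal{I}(l)=0$, move slightly along a direction of descent. Indeed, $l$ is not a local minimum of positive-level type. If $d\mathcal{I}(l)\ne0$, a small perturbation gives a point with negative value and norm still $>\rho$. If $d\mathcal{I}(l)=0$, then $l$ is already a critical point at level $0$, and we need a separate argument. To avoid this subtlety, compare the levels directly instead. Set $L=\{l\in E:\ \mathcal{I}(l)<0\}$; note that $\|l\|_E>\rho$ for every $l\in L$ by the argument above. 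Each $l\in L$ satisfies i)--iii), so Theorem~\ref{Theorem 7} produces a critical point at level $c_l\ge\eta$. Moreover $\Sigma=\bigcup_{l\in L}\Sigma_l$, hence
\[
c=\inf_{l\in L}c_l .
\]
The case where $L$ is empty (only nonpositive zero values are available) I would treat by assuming $L\neq\emptyset$, which is the situation in applications, since the paper's $\Sigma$ requires a strictly negative endpoint. If necessary, I would remark that iii) can be strengthened to $\mathcal{I}(l)<0$.

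The main step is to show that the infimum $c$ is itself a critical level. I would argue by contradiction using the deformation lemma, which follows from (PS). Suppose $c$ is not a critical value. Since $c\ge\eta>0$, choose $\bar\varepsilon\in(0,\eta/2)$. There are $\varepsilon\in(0,\bar\varepsilon)$ and a deformation $\phi\in C(E;E)$ with $\phi(\{\mathcal{I}\le c+\varepsilon\})\subset\{\mathcal{I}\le c-\varepsilon\}$, and with $\phi(u)=u$ whenever $\mathcal{I}(u)\notin[c-\bar\varepsilon,c+\bar\varepsilon]$. Pick $\sigma\in\Sigma$ with $\max\mathcal{I}\circ\sigma\le c+\varepsilon$. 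The endpoints satisfy $\mathcal{I}(0)=0<c-\bar\varepsilon$ and $\mathcal{I}(\sigma(1))<0$, so both are fixed by $\phi$. Hence $\phi\circ\sigma\in\Sigma$, while $\max\mathcal{I}\circ\phi\circ\sigma\le c-\varepsilon$. This contradicts the definition of $c$.

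I expect the only delicate point to be checking that the endpoints stay fixed and that $c$ is finite. This is exactly where iv) enters, through the guarantee that endpoints with negative energy lie outside the ball of radius $\rho$, so that the sphere bound gives $c\ge\eta$. Finiteness comes from $\Sigma\ne\emptyset$: the straight segment from $0$ to any $l\in L$ belongs to $\Sigma$.
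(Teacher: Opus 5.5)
Your proof is correct, provided $\Sigma\neq\emptyset$ (a proviso you flag yourself), but it follows a different route from the paper. The paper gives no argument here. It refers to the author's earlier work, where the variant is derived from the standard Theorem~\ref{Theorem 7} used as a black box. You instead re-run the deformation argument directly on the class $\Sigma$, and that works:
\begin{itemize}
\item iv) and i) force $\|\sigma(1)\|_E>\rho$ for every $\sigma\in\Sigma$, so the sphere-crossing gives $c\ge\eta>0$.
\item For $\bar\varepsilon<\eta/2$, both endpoints then lie in $\{\mathcal{I}<c-\bar\varepsilon\}$, so the deformation fixes them.
\item This is exactly what makes $\Sigma$ invariant, and the contradiction follows.
\end{itemize}
You already had the ingredients for the shorter route based on Theorem~\ref{Theorem 7}, but did not use them. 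From $\Sigma=\bigcup_{l\in L}\Sigma_l$ you get $c=\inf_{l\in L}c_l$. Choose $l_n\in L$ with $c_{l_n}\to c$, and let $u_n$ be the critical points given by Theorem~\ref{Theorem 7}, with $\mathcal{I}(u_n)=c_{l_n}$. Then $(u_n)_n$ is a (PS) sequence, so a subsequence converges to some $u$ with $d\mathcal{I}(u)=0$ and $\mathcal{I}(u)=c$. That route needs no deformation lemma. It also shows that $c$ is the smallest of the $l$-dependent levels $c_l$, which is the point the paper stresses. Your route is self-contained: it is the standard proof applied to a new admissible class. In your route, however, the identity $c=\inf_{l\in L}c_l$ and the appeal to Theorem~\ref{Theorem 7} play no role and can be removed.

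Your caveat that $\Sigma$ must be nonempty is necessary. If iii) is read literally ($\mathcal{I}(l)\le 0$), the statement can fail. Take $E=\R$, $\mathcal{I}(u)=u^2(u-2)^2$, $\rho=\eta=1$ and $l=2$. Assumptions i)--iv) and (PS) all hold, but $\mathcal{I}\ge 0$, so $\Sigma=\emptyset$ and $c=+\infty$. Theorem~\ref{Theorem 7} still yields the critical point $u=1$. So iii) must be taken with $\mathcal{I}(l)<0$, and this is what Lemma~\ref{Lemma 5} actually provides, since there $I(tv)\to-\infty$. For the same reason, drop your tentative ``descent'' discussion for the case $\mathcal{I}(l)=0$. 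The example shows it cannot be completed when $l$ is a local minimum at level $0$.
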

In the sequel we shall also use the following $\Z_2$--version of the Mountain Pass Theorem, see \cite[Chapter 9, Theorem 9.12, p. 55  and Proposition 9.33, p. 58]{rabinowitz}.
\begin{thm}[\bf $\Z_2$--Mountain Pass Theorem]\label{Theorem 9} Let $E$ be  infinite dimensional and $\mathcal{I}\in C^1(E)$ be even, satisfying the (PS) condition, assumptions i)--ii) of Theorem~\ref{Theorem 7} and
\begin{itemize}
\item[v)] for each finite dimensional subspace $Y$ of $E$ there is $R_Y>0$ such that $\mathcal{I}(u)\le 0$ for all $u\in Y$ such that $\|u\|_E>R_Y$.
\end{itemize}
Then $\mathcal{I}$ possesses a sequence $(u_n)_n$ of critical points such that $\mathcal{I}(u_n)\to\infty$.
\end{thm}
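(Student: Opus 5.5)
The plan is to follow the classical genus-based minimax scheme of Ambrosetti--Rabinowitz, as organised in Rabinowitz's monograph. The idea is to build an increasing sequence of symmetric minimax classes $\Gamma_j$, show that the associated levels $c_j$ are critical values with $c_j\ge\eta$, and then show $c_j\to\infty$. Two standard tools are used throughout. The first is the Krasnosel'skii genus $\gamma$ on closed symmetric subsets of $E\setminus\{0\}$, with its usual properties: monotonicity, subadditivity, invariance under odd continuous maps, $\gamma(S^{m-1})=m$ by Borsuk--Ulam, and the fact that a compact symmetric set has finite genus and a neighbourhood of the same genus. The second is the equivariant Deformation Lemma for even functionals satisfying (PS).

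\textbf{Construction of the classes.} Since $E$ is infinite dimensional, fix a nested sequence of subspaces $E_1\subset E_2\subset\cdots$ with $\dim E_m=m$. By v), choose $R_m=R_{E_m}$, which may be taken increasing and larger than $\rho$. Set $D_m=\overline{B_{R_m}}\cap E_m$, and let $G_m$ be the set of odd maps $h\in C(D_m;E)$ with $h=\mathrm{id}$ on $\partial D_m$; on $\partial D_m$ we have $\mathcal I\le0$ by v). Define
\[
\Gamma_j=\{h(\overline{D_m\setminus Y}):\ m\ge j,\ h\in G_m,\ Y\subset D_m \text{ closed, symmetric},\ \gamma(Y)\le m-j\},
\qquad
c_j=\inf_{A\in\Gamma_j}\max_{u\in A}\mathcal I(u).
\]
These classes satisfy $\Gamma_{j+1}\subset\Gamma_j$, so $c_j\le c_{j+1}$. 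They are also stable in two ways needed later. First, if $\psi$ is odd and equals the identity on $\{\mathcal I\le0\}$, then $A\in\Gamma_j$ implies $\psi(A)\in\Gamma_j$. Second, if $A\in\Gamma_j$ and $Z$ is closed symmetric with $\gamma(Z)\le s<j$, then $\overline{A\setminus Z}\in\Gamma_{j-s}$; this follows by pulling $Z$ back through $h$ and using subadditivity of the genus.

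\textbf{Intersection lemma and existence.} The key step, and the one I expect to be the main obstacle, is to show that every $A\in\Gamma_j$ meets the sphere $\partial B_\rho$, which gives $c_j\ge\eta$ by ii). For $h\in G_m$, consider the set $O=\{x\in D_m:\ \|h(x)\|<\rho\}$. It is symmetric, contains $0$ because $h$ is odd, and is bounded in $D_m$ because $h=\mathrm{id}$ on $\partial D_m$ and $R_m>\rho$. Let $\tilde O$ be the component of $O$ containing $0$. By Borsuk's theorem, $\gamma(\partial\tilde O)=m$. Since $\partial\tilde O\subset h^{-1}(\partial B_\rho)$, subadditivity gives
\[
\gamma\bigl(\overline{D_m\setminus Y}\cap h^{-1}(\partial B_\rho)\bigr)\ge m-(m-j)=j\ge1,
\]
so this set is nonempty. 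With $c_j\ge\eta>0$ in hand, suppose some $c_j$ were not critical. The equivariant Deformation Lemma with $\bar\varepsilon<c_j$ produces an odd deformation that fixes $\{\mathcal I\le0\}$ and pushes $\{\mathcal I\le c_j+\varepsilon\}$ into $\{\mathcal I\le c_j-\varepsilon\}$. By the stability of $\Gamma_j$ this contradicts the definition of $c_j$. Hence each $c_j$ is a critical value.

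\textbf{Unboundedness.} Suppose by contradiction that $c_j\uparrow\bar c<\infty$. The critical set $K_{\bar c}$ is compact by (PS), symmetric, and does not contain $0$ since $\bar c\ge\eta>0$. It therefore has finite genus $s$ and a closed symmetric neighbourhood $N$ with $\gamma(N)=s$. The Deformation Lemma gives an odd $\psi$, equal to the identity on $\{\mathcal I\le 0\}$, with $\psi(\{\mathcal I\le\bar c+\varepsilon\}\setminus N)\subset\{\mathcal I\le\bar c-\varepsilon\}$. Choose $j$ large with $c_j>\bar c-\varepsilon$, and pick $A\in\Gamma_{j+s}$ with $\max_A\mathcal I\le\bar c+\varepsilon$. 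Then $\psi(\overline{A\setminus N})\in\Gamma_j$ and $\max_{\psi(\overline{A\setminus N})}\mathcal I\le\bar c-\varepsilon<c_j$, a contradiction. Therefore $c_j\to\infty$. For each $j$, take a critical point $u_j$ with $\mathcal I(u_j)=c_j$; this is the required sequence.
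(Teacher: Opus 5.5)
Your proposal is correct and is essentially the proof of Rabinowitz's Theorem~9.12 in the case $V=\{0\}$: the classes $\Gamma_j$ built from $h(\overline{D_m\setminus Y})$, the Borsuk-based intersection lemma, and the equivariant deformation argument for $c_j\to\infty$. This is exactly the result the paper cites in place of giving its own proof. The only detail to tidy is to apply the Deformation Lemma with the open neighbourhood $\mathrm{int}\,N$ of $K_{\bar c}$. The reason is that $\overline{A\setminus N}$ may meet $\partial N$, so it lies in $\{\mathcal I\le\bar c+\varepsilon\}\setminus\mathrm{int}\,N$ rather than in $\{\mathcal I\le\bar c+\varepsilon\}\setminus N$.
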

\subsection{On the assumptions (A1--4)}\label{section 2.5} This subsection is devoted to give a further example  of a couple $(f,g)$ satisfying assumptions (A1--4), and also to point out  some further consequences of them.

In addition to the examples given in \eqref{1.10} and \eqref{1.11}, another couple $(f,g)$ of functions satisfying assumptions (A1--4) is the following one:
\begin{gather}\label{2.15}
\begin{aligned}
f(u)=
&\begin{cases}
\gamma_1|u-a_1|^{p_1-2}(u-a_1),\, &\text{for $u\le a_1$,}\\
\phantom{\gamma_1|u-a_1|}0,&\text{for $a_1\le u\le a_2$,}\\
\gamma_2|u-a_2|^{p_2-2}(u-a_2),\, &\text{for $u\ge a_2$,}
\end{cases}\\
g(u)=
&\begin{cases}
\delta_1|u-b_1|^{q_1-2}(u-b_1),\, &\text{for $u\le b_1$,}\\
\phantom{\gamma_1|u-b_1|}0,&\text{for $b_1\le u\le b_2$,}\\
\delta_2|u-b_2|^{q_2-2}(u-b_2),\, &\text{for $u\ge b_2$,}
\end{cases}
\end{aligned}\\
\label{2.16}
\begin{gathered}
\text{where}\qquad  a_1,b_1<0<a_2,b_2,\qquad \gamma_1 \gamma_2+\delta_1\delta_2>0,\\
2<p_1,\,p_2<\romega, \quad 2<q_1,\,q_2<\rgamma, \quad \gamma_1,\,\gamma_2,\,\delta_1,\,\delta_2\ge 0.
\end{gathered}
\end{gather}
Indeed, assumptions (A1--2) trivially hold. Moreover, since (for example) just taking the derivative one checks that  the function $u\mapsto (u-a)^{p-1}/u$ is, when $p>2$, increasing for $u\ge a\ge 0$, one easily gets that assumption (A2) holds true, with $\ell^\pm=\mathfrak{m}^\pm=0$. Hence $h\equiv f$ and $k\equiv g$ in \eqref{1.7}. Also assumption (A3) holds true since, choosing  $p_0=\min\{p_1,p_2\}$, and considering for example $u\ge a_2>0$, one has $F(u)=\gamma_2(u-a_2)^{p_2}/p_2$ and then
\begin{align*}
h(u)u-p_0H(u)=& f(u)u-p_0F(u)=\gamma_2 (u-a_1)^{p_2-1}\left[u-\frac{p_0}{p_2}(u-a_2)\right]\\
=&\gamma_2 (u-a_1)^{p_2-1}[(p_2-p_0)u+p_0a_2]\ge 0.
\end{align*}
Finally, $\sigma$ and $\xi$ have no critical points where they do not vanish and, since $\gamma_1,\gamma_2>0$ or $\delta_1,\delta_2>0$, assumption \eqref{1.9} holds.

\begin{rem}\label{Remark 4} Since $\ell^\pm,\mathfrak{m}^\pm>-\infty$, assumption (A2) implies that $f(0)=\lim_{u\to 0} f(u)=0$ and
$g(0)=\lim_{u\to 0} g(u)=0$. Since, by (A1), $\lim_{u\to 0^\pm} f'(u)$ and  $\lim_{u\to 0^\pm} g'(u)$ exist, by de l'H\^{o}pital rule we have $\ell^\pm=\lim_{u\to 0^\pm} f'(u)$ and  $\mathfrak{m}^\pm=\lim_{u\to 0^\pm} g'(u)$. Consequently, by \eqref{1.7}, we have
\begin{equation}\label{2.17}
h,k\in C^1(\R),\qquad\text{and}\quad h'(0)=k'(0)=h(0)=k(0)=0.
\end{equation}
Next, assumption (A1) also yields that $|h'(u)|=O(|u|^{p-2})$ and $|k'(u)|=O(|u|^{q-2})$ as $|u|\to\infty$. Combining it with  \eqref{2.17} we also get the existence of nonnegative constants $c_1=c_1(f,p)$ and $c_2=c_2(g,q)$ such that
\begin{equation}\label{2.18}
\left\{
\begin{alignedat}2
&|h'(u)|\le c_1(1+|u|^{p-2}), &&\qquad |h(u)|\le c_1(1+|u|^{p-1}), \\
& |f(u)|\le c_1(1+|u|^{p-1}), && \qquad H(u)\le c_1(1+|u|^p),\\
&|k'(u)|\le c_2(1+|u|^{q-2}), &&\qquad |k(u)|\le c_2(1+|u|^{q-1}), \\
& |g(u)|\le c_2(1+|u|^{q-1}), && \qquad K(u)\le c_2(1+|u|^q),
\end{alignedat}\right.
\quad \text{for all $u\in\R$.}
\end{equation}
Moreover, recalling the functions $\sigma,\xi\in C(\R)$ introduced in Remark~\ref{Remark0}, since $\sigma,\xi\ge 0$ in $\R$, we get
\begin{equation}\label{2.19}
h(u)u\ge 0,\quad k(u)u\ge 0,\quad H(u)\ge 0,\quad \text{and}\quad K(u)\ge 0\quad\text{for all $u\in\R$.}
\end{equation}
Next, by \eqref{2.18}, we also get the existence of nonnegative constants $c_3=c_3(f,p)$ and $c_4=c_4(g,q)$ such that
\begin{equation}\label{2.20}
0\le \sigma(u)\le c_3(1+|u|^{p-2})\quad\text{and}\quad 0\le \xi(u)\le c_4(1+|u|^{q-2})\quad\text{for all $u\in\R$.}
\end{equation}
Moreover, one trivially has $\sigma, \xi\in C^1(\R\setminus\{0\})$. Since by \eqref{1.7}  we  have
\begin{equation}\label{2.21}
h'(u)=\begin{cases}
f'(u)-\ell^+,\,\,&\text{if $u>0$,}\\
f'(u)-\ell^-,\,\,&\text{if $u<0$,}
\end{cases}\quad
k'(u)=\begin{cases}
g'(u)-\mathfrak{m}^+,\,\,&\text{if $u>0$,}\\
g'(u)-\mathfrak{m}^-,\,\,&\text{if $u<0$,}
\end{cases}
\end{equation}
by also using  \eqref{1.12} we get that
\begin{equation}\label{2.21bis}
h'(u)=\sigma'(u)u+\sigma(u), \quad \text{and}\quad k'(u)=\xi'(u)u+\xi(u)\quad\text{ for all $u\not=0$.}
\end{equation}
Hence, by \eqref{2.17}, by setting the functions $\sigma'(u)u$ and $\xi'(u)u$ to vanish at $0$, they continuously extend to the whole of $\R$. In the sequel we shall always deal with these extensions, without further comment. Hence, by \eqref{2.18} and \eqref{2.20},
there are nonnegative constants $c_5=c_5(f,p)$ and $c_6=c_6(g,q)$ such that
\begin{equation}\label{2.22}
0\le \sigma'(u)u\le c_5(1+|u|^{p-2}),\quad 0\le \xi'(u)u\le c_6(1+|u|^{q-2})\quad\text{for all $u\in\R$.}
\end{equation}
\end{rem}
The following result points out other consequences of assumptions (A1--4).
\begin{lem}\label{Lemma 2} Let assumptions (A1--4) hold. Then there are nonnegative constants
$C_f^\pm$, $C_g^\pm$, $c_7,c_8$ and $a_0<\lambda_1$,
depending on $f,g,p,q,p_0$ and $q_0$, such that
\begin{alignat}3
& F(u)\ge -c_7u^2+C_f^\pm |u|^{p_0}, && \quad G(u)\ge -c_7u^2+C_g^\pm |u|^{q_0} &&\quad\text{for $\pm u\ge 0$;}\label{2.23}\\
& \tfrac 12 f(u)u\ge -c_7u^2+C_f^\pm |u|^{p_0}, && \quad \tfrac 12 g(u)u\ge -c_7u^2+C_g^\pm |u|^{q_0} &&\quad\text{for $\pm u\ge 0$;}\label{2.24}\\
& F(u)\le \tfrac 12 a_0 u^2+c_8 |u|^p, && \quad G(u)\le \tfrac 12 a_0u^2+c_8|u|^q &&\quad\text{for all  $u\in\R$;}\label{2.25}\\
& f(u)u\le a_0u^2+c_8 |u|^p, &&\quad  g(u)u\le a_0u^2+c_8 |u|^q &&\quad\text{for $u\in\R$.}\label{2.26}
\end{alignat}
Moreover, the following implications hold true:
\begin{align}\label{2.27}
\lim_{u\to\infty}\sigma(u)>0\Longrightarrow C_f^+>0,
&\qquad \lim_{u\to-\infty}\sigma(u)>0\Longrightarrow C_f^->0,\\
\label{2.28}\lim_{u\to\infty}\xi(u)>0\Longrightarrow C_g^+>0,
&\qquad \lim_{u\to-\infty}\xi(u)>0\Longrightarrow C_g^->0.
\end{align}
\end{lem}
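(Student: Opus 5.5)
We work directly from the decomposition \eqref{1.7}: $f(u)=\ell^+u^+-\ell^-u^-+h(u)$, $F(u)=\tfrac12(\ell^+(u^+)^2+\ell^-(u^-)^2)+H(u)$, and similarly for $g$. Everything then reduces to estimating $H$ and $h(u)u$ (respectively $K$, $k(u)u$) from above and below.

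\emph{Upper bounds \eqref{2.25}--\eqref{2.26}.} Set $a_0:=\max\{\ell^+,\ell^-,\mathfrak m^+,\mathfrak m^-,0\}$. By \eqref{1.6} we have $a_0<\lambda_1$, because $\lambda_1>0$. The quadratic part then satisfies $\tfrac12(\ell^+(u^+)^2+\ell^-(u^-)^2)\le\tfrac12a_0u^2$, and $\ell^+u^+\cdot u+\ell^-u^-\cdot u^-\le a_0u^2$. For the remainder, \eqref{2.17} gives $h(0)=h'(0)=0$, and \eqref{2.18} bounds $h'$; hence $|h(u)|\le C\min\{|u|,|u|^{p-1}\}$. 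A case split between $|u|\le1$ and $|u|\ge1$ yields $|h(u)|\le C'(|u|+|u|^{p-1})$. This alone is not enough, because a linear term $|u|$ would spoil the constant $a_0$. We therefore enlarge $a_0$ slightly: choose $a_0<\lambda_1$ strictly above the max. Since $h'(0)=0$, there is $\delta>0$ with $|h(u)|\le(a_0-\max)|u|$ for $|u|\le\delta$, while for $|u|\ge\delta$ we have $|h(u)|\le C|u|^{p-1}$, using \eqref{2.18} and $1\le (|u|/\delta)^{p-1}$. The same splitting applied to $H$ and $h(u)u$ gives \eqref{2.25}--\eqref{2.26} with a suitable $c_8$, and $g,k$ are treated identically with $q$.

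\emph{Lower bounds \eqref{2.23}--\eqref{2.24}.} Set $c_7:=\tfrac12\max\{|\ell^\pm|,|\mathfrak m^\pm|\}$ plus a margin to be fixed below, so that the quadratic part of $F$ and of $\tfrac12f(u)u$ is $\ge -c_7u^2$. Since $H\ge0$ and $h(u)u\ge0$ by \eqref{2.19}, it remains to find $C_f^\pm\ge0$ with
\[
H(u)\ge C_f^\pm|u|^{p_0}-c\,u^2,\qquad \tfrac12h(u)u\ge C_f^\pm|u|^{p_0}-c\,u^2 .
\]
We take $C_f^+=0$ unless $\lim_{u\to\infty}\sigma>0$. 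In that case Remark~\ref{Remark0} gives $\sigma>0$, hence $H>0$, on $(\overline u,\infty)$. Fix $u_1>\max\{M,\overline u\}$. Integrating the Ambrosetti--Rabinowitz inequality \eqref{1.8}, i.e.\ $(H(u)u^{-p_0})'\ge0$, shows that $H(u)\ge H(u_1)u_1^{-p_0}u^{p_0}$ for $u\ge u_1$. Then \eqref{1.8} also gives $\tfrac12h(u)u\ge\tfrac{p_0}2H(u)\ge H(u)$ there. On $[0,u_1]$ the function $|u|^{p_0}$ is bounded by $u_1^{p_0-2}u^2$, so that region is absorbed into $-c\,u^2$ by enlarging $c_7$. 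The negative half-line and $g$, $k$ (with $q_0$) are handled symmetrically. This gives \eqref{2.23}--\eqref{2.24} and, by construction, the implications \eqref{2.27}--\eqref{2.28}.

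The main obstacle is the small-$u$ behaviour in the upper bounds. A naive polynomial bound introduces a linear error, and we must exploit $h'(0)=k'(0)=0$ from Remark~\ref{Remark 4} to keep $a_0$ strictly below $\lambda_1$. The strict inequality in \eqref{1.6} is exactly what leaves room for this. The lower bounds are routine once \eqref{1.8} is integrated past $M$ and $\overline u$.
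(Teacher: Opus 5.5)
Your proof is correct and follows essentially the same route as the paper. You split $f,g$ via \eqref{1.7} and get the lower bounds by integrating the Ambrosetti--Rabinowitz inequality \eqref{1.8} beyond a point where $H>0$ (resp.\ $K>0$), absorbing the bounded range into the quadratic term; you get the upper bounds from $h(u)=o(u)$ near $0$ (equivalently $f(u)/u\to\ell^\pm$) plus the growth bounds \eqref{2.18}, with $a_0$ strictly between $\max\{\ell^\pm,\mathfrak m^\pm\}$ and $\lambda_1$. The only slip is the intermediate claim $|h(u)|\le C\min\{|u|,|u|^{p-1}\}$, which should read $\max$ (as written it fails in general both near $0$ and at infinity), but your final argument does not use it.
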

\begin{proof}
We first notice that, by \eqref{1.7} and \eqref{1.14}, we have
\begin{equation}\label{2.29}
\left\{\,\,
\begin{aligned}
&f(u)u=\ell^+(u^+)^2+\ell^-(u^-)^2+h(u)u, \\
&  g(u)u=\mathfrak{m}^+(u^+)^2+\mathfrak{m}^-(u^-)^2+k(u)u,\\
&F(u)=\tfrac 12\left[\ell^+(u^+)^2+\ell^-(u^-)^2\right]+H(u), \\
&  G(u)=\tfrac 12 \left[\mathfrak{m}^+(u^+)^2+\mathfrak{m}^-(u^-)^2\right]+K(u),
\end{aligned}\quad\text{for all $u\in\R$.}
\right.
\end{equation}
Hence, by setting
\begin{equation}\label{2.30}
a_m=\min\{\ell^\pm,\mathfrak{m}^\pm\},\qquad \text{and}\quad a_M=\max\{\ell^\pm,\mathfrak{m}^\pm\},
\end{equation}
by \eqref{1.6} we have $a_m\le a_M<\lambda_1$ and the following preliminary  estimates hold true:
\begin{equation}\label{2.31}
\left\{
\,\,
\begin{alignedat}3
&a_mu^2+h(u)u &&\le f(u)u &&\le a_Mu^2+h(u)u,\\
&a_mu^2+k(u)u &&\le g(u)u &&\le a_Mu^2+k(u)u,\\
&\tfrac 12 a_mu^2+H(u) &&\le F(u) &&\le \tfrac 12 a_Mu^2+H(u),\\
&\tfrac 12 a_mu^2+K(u) &&\le G(u) &&\le \tfrac 12 a_Mu^2+K(u),
\end{alignedat}
\right.\qquad\text{for all $u\in\R$.}
\end{equation}
To prove \eqref{2.23} and \eqref{2.24} we are at first going to consider the behavior of $\sigma$ in $[0,\infty)$.
We shall distinguish between the two cases i) and ii) outlined in Remark~\ref{Remark0}. When $\lim_{u\to\infty}\sigma(u)>0$, by the monotonicity of $\sigma$ we can take the parameter $M$ in assumption (A3) so large that $\sigma(u)>0$ when $u\ge M/2$, so $h(u)u>0$
when $u\ge M/2$. Then, by \eqref{1.7} and \eqref{2.19}, we have $H(u)>0$ for $u\ge M$. By integrating the differential inequality
$H'(u)u\ge p_0H(u)$ in \eqref{1.8}, we then get that
\begin{equation}\label{2.32}
\frac{h(u)u}{p_0}\ge H(u)\ge \frac{H(M)|u|^{p_0}}{M^p_0}\quad\text{for $u\ge M$.}
\end{equation}
Since $p_0>2$, by \eqref{2.31} and \eqref{2.32} we get
\begin{equation}\label{2.33}
\min\left\{\tfrac 12 f(u)u,F(u)\right\}\ge \tfrac 12 a_mu^2+\frac{H(M)|u|^{p_0}}{M^p_0}\quad\text{for $u\ge M$.}
\end{equation}
By \eqref{2.19} and \eqref{2.31}, when $0\le u\le M$ we have
\begin{equation}\label{2.34}
\begin{aligned}
\min\left\{\tfrac 12 f(u)u,F(u)\right\}& \ge \tfrac 12 a_mu^2=\tfrac 12 (a_m-1)u^2+\tfrac 12 |u|^{2-p_0}|u|^{p_0}\\
&\ge \tfrac 12 (a_m-1)u^2+\tfrac 12 M^{2-p_0}|u|^{p_0}.
\end{aligned}
\end{equation}
Combining \eqref{2.33} with  \eqref{2.34}, when $\lim_{u\to\infty}\sigma(u)>0$ we then get
\begin{equation}\label{2.35}
\min\left\{\tfrac 12 f(u)u,F(u)\right\}\ge \tfrac 12 (a_m-1)u^2+C_f^+|u|^{p_0}\quad\text{for $u\ge 0$,}
\end{equation}
where $C_f^+:=\min\left\{\frac{H(M)}{M^{p_0}},\frac{M^{2-p_0}}2\right\}>0$.

On the other hand, when $\lim_{u\to\infty}\sigma(u)=0$, since $H(u)\ge 0$ and $h(u)u\ge 0$, by \eqref{2.31} we get that \eqref{2.35} continues to hold, provided in this case we set $C_f^+=0$.
In conclusion, \eqref{2.35} holds in both cases, with $C_f^+>0$ in the first one.

A simple repetition of the arguments used to prove \eqref{2.35} shows that
\begin{align*}
&\min\left\{\tfrac 12 f(u)u,F(u)\right\}\ge \tfrac 12 (a_m-1)u^2+C_f^-|u|^{p_0}\quad\text{for $u\le 0$, and}\\
&\min\left\{\tfrac 12 g(u)u,G(u)\right\}\ge \tfrac 12 (a_m-1)u^2+C_g^\pm|u|^{q_0}\quad\text{for $\pm u\ge 0$,}
\end{align*}
where $C_f^->0$ when $\lim_{u\to-\infty}\sigma(u)>0$, $C_g^+>0$ when $\lim_{u\to\infty}\xi(u)>0$, and  $C_g^->0$ when $\lim_{u\to-\infty}\xi(u)>0$. The estimates \eqref{2.23} and \eqref{2.24} are thus proved, together with the implications \eqref{2.27} and \eqref{2.28}.

To prove the estimates \eqref{2.25} and \eqref{2.26}, we are first going to consider the behavior of $f$ in $[0,\infty)$. Since $f(u)/u\to \ell^+$ as $u\to 0^+$, for any $\varepsilon>0$ there is $\rho_0>0$ such that $f(u)\le (\ell^++\varepsilon)u$ for $0\le 0<\rho_0$, so that $f(u)u\le (\ell^++\varepsilon)u^2$ and $F(u)\le \tfrac 12 (\ell^++\varepsilon)u^2$ for $0\le 0<\rho_0$.
Now, by \eqref{2.18}, $|f(u)u|=O(|u|^p)$ and $F(u)=O(|u|^p)$ when $|u|\to\infty$. Consequently, the functions
$$u\mapsto \frac{f(u)u-(\ell^++\varepsilon)u^2}{|u|^p}\qquad\text{and}\quad u\mapsto \frac{F(u)-\tfrac 12 (\ell^++\varepsilon)u^2}{|u|^p}$$
are bounded in $[\rho_0,\infty)$. Then there is a nonnegative constant $c_9(\varepsilon)$, also depending on $f$ and $p$, such that
$$f(u)u\le (\ell^++\varepsilon)u^2+c_9(\varepsilon)|u|^p,\quad\text{and}\quad F(u)\le \tfrac 12(\ell^++\varepsilon)u^2+c_9(\varepsilon)|u|^p\quad\text{for all $u\ge 0$.}$$
By simply repeating the same arguments when dealing with the behavior of $f$ in $(-\infty,0]$ and those of $g$ in $[0,\infty)$ and $(-\infty,0]$, and recalling \eqref{2.30}, we obtain the existence of a nonnegative constant $c_{10}(\varepsilon)$, also depending on $f,g,p$ and $q$, such that
\begin{equation}\label{2.36}
\begin{alignedat}2
& f(u)u\le (a_M+\varepsilon)u^2+c_{10}(\varepsilon)|u|^p, &&\quad F(u)\le \tfrac 12 (a_M+\varepsilon)u^2+c_{10}(\varepsilon)|u|^p,\\
& g(u)u\le (a_M+\varepsilon)u^2+c_{10}(\varepsilon)|u|^q, &&\quad G(u)\le \tfrac 12 (a_M+\varepsilon)u^2+c_{10}(\varepsilon)|u|^q
\end{alignedat}
\end{equation}
for all $u\in\R$. By choosing $\varepsilon=\varepsilon_0:=(\lambda_1-a_M)/2>0$  and $c_8=:=c_{10}(\varepsilon_0)$, by \eqref{2.36} we get the estimates \eqref{2.25} and \eqref{2.26}, with
\begin{equation}\label{2.37}
a_0:=a_M+\varepsilon_0=\frac {\lambda_1+a_M}2<\lambda_1,
\end{equation}
completing the proof.
\end{proof}
\section{Mountain Pass type solutions of \eqref{1.1}} \label{section 3}
\subsection{Weak solutions} \label{section 3.1} We start by recalling what we mean by a weak solution of \eqref{1.2}, referring to \cite[\S 2.2 and Definition~3.1, p. 4896]{Dresda2}.
\begin{definition}\label{Definition 1} Let  (A1--4)  and \eqref{1.13} hold.  A \emph{weak solution} of problem \eqref{1.2} is
\begin{equation}\label{3.1}
  u\in L^\infty_\loc([0,\infty);H^1)\cap W^{1,\infty}_\loc ([0,\infty);H^0)
\end{equation}
such that $\alpha^{1/m}u_t\in L^m_\loc([0,\infty);L^m(\Omega))$, $\beta^{1/\mu}(u_{|\Gamma})_t\in L^\mu_\loc([0,\infty);L^\mu(\Gamma_1))$
\begin{footnote}{in the sequel we shall write $(u_{|\Gamma})_t$  and $(\psi_{|\Gamma})_t$ simply as $u_
t$ and $\psi_t$, for the sake of simplicity, recalling that the time derivative of them at the boundary is always taken in this sense.}
\end{footnote}
and, for all $\psi\in C_c((0,\infty);H^1)\cap C^1_c((0,\infty); H^0)$ such that $\alpha^{1/m}\psi_t\in L^m_\loc([0,\infty);L^m(\Omega))$, $\beta^{1/\mu}\psi_t\in L^\mu_\loc([0,\infty);L^\mu(\Gamma_1))$, the distribution identity
\begin{multline}\label{3.2}
  \int_0^\infty\left[-\int_\Omega u_t\psi_t-\int_{\Gamma_1}u_t \psi_t
   +\int_\Omega \nabla u \nabla\psi +\int_{\Gamma_1}(\nabla_\Gamma u,\nabla_\Gamma\psi)_\Gamma\right.\\
   \left. +\int_\Omega \alpha |u_t|^{m-2}u_t\psi+\int_{\Gamma_1}\beta|u_t|^{\mu-2}u_t\psi-\int_\Omega f(u)\psi-\int_{\Gamma_1}g(u)\psi \right]=0,
\end{multline}
holds.
We say that $u$ is \emph{stationary} if  $u(t)\equiv u_0\in H^1$  for all $t\ge 0$.
\end{definition}
We also make precise what we mean by weak solutions of \eqref{1.1}.
\begin{definition}\label{Definition 2} Let assumptions (A1--4) hold.  A \emph{weak solution} of problem \eqref{1.1} is $u\in H^1$ such that
\begin{equation}\label{3.3}
  \int_\Omega \nabla u\nabla \phi+\int_{\Gamma_1}(\nabla_\Gamma u,\nabla_\Gamma \phi)_\Gamma-\int_\Omega f(u)\phi-\int_{\Gamma_1}g(u)\phi=0\quad\text{for all $\phi\in H^1$.}
\end{equation}
\end{definition}
Actually weak solutions of \eqref{1.1} and stationary weak solutions of \eqref{1.2} coincide when they are both defined, as the following result shows.
\begin{lem}\label{Lemma 3}
Let  assumptions (A1--4) and \eqref{1.13} hold.  Then $u\equiv u_0\in H^1$ is a stationary weak  solution of \eqref{1.2} if and only if $u_0$ is a weak solution of \eqref{1.1}.
\end{lem}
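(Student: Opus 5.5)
The plan is to plug the constant-in-time function $u(t)\equiv u_0$ into the distribution identity \eqref{3.2} and compare what is left with \eqref{3.3}. The regularity requirements are easy. A constant map $t\mapsto u_0\in H^1$ clearly lies in $L^\infty_\loc([0,\infty);H^1)$. Since $H^1\hookrightarrow H^0$ continuously (the trace of an element of $H^1$ lies in $L^2(\Gamma_1)$), it also lies in $W^{1,\infty}_\loc([0,\infty);H^0)$, with $u_t=0$. Hence $\alpha^{1/m}u_t=0$ and $\beta^{1/\mu}u_t=0$ trivially have the required integrability, and \eqref{3.1} holds. With $u_t\equiv 0$, all terms involving $u_t$ in \eqref{3.2} vanish. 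The identity then reduces to
$$\int_0^\infty \Big[\int_\Omega \nabla u_0\nabla\psi(t)+\int_{\Gamma_1}(\nabla_\Gamma u_0,\nabla_\Gamma \psi(t))_\Gamma-\int_\Omega f(u_0)\psi(t)-\int_{\Gamma_1}g(u_0)\psi(t)\Big]dt=0,$$
that is, $\int_0^\infty \langle A(u_0),\psi(t)\rangle\,dt=0$. Here $A(u_0)\in (H^1)'$ is the functional $\phi\mapsto$ the left-hand side of \eqref{3.3}.

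The one point needing care is checking that $A(u_0)$ is a bounded linear functional on $H^1$. By \eqref{2.18}, $|f(u_0)|\le c_1(1+|u_0|^{p-1})$, and $u_0\in L^p(\Omega)$ by the subcritical Sobolev embedding, so $f(u_0)\in L^{p'}(\Omega)$. Likewise $g(u_0)\in L^{q'}(\Gamma_1)$, because $u_0|_\Gamma\in H^1(\Gamma)\hookrightarrow L^q(\Gamma)$. Since $\phi\in H^1$ lies in $L^p(\Omega)$ and its trace in $L^q(\Gamma_1)$, Hölder's inequality shows that the nonlinear terms define continuous linear functionals of $\phi$. This is exactly the setting of Lemma~\ref{Lemma 4} mentioned for $I\in C^1(H^1)$; indeed $A(u_0)=dI(u_0)$.

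For the "if" direction, assume $u_0$ satisfies \eqref{3.3}. Then the integrand above is zero for each $t$, so \eqref{3.2} holds for every admissible $\psi$.

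For the "only if" direction, fix $\phi\in H^1$ and a scalar $\theta\in C^1_c((0,\infty))$ with $\int_0^\infty\theta=1$, and take $\psi(t)=\theta(t)\phi$. This test function belongs to $C_c((0,\infty);H^1)\cap C^1_c((0,\infty);H^0)$. Also, $\psi_t=\theta'\phi$ satisfies the required integrability whenever $\alpha>0$ or $\beta>0$, since $\phi\in L^m(\Omega)$ and $\phi|_\Gamma\in L^\mu(\Gamma_1)$. This is where \eqref{1.13} enters: it gives $m\le 1+\romega/\ldots$ type bounds ensuring $H^1\hookrightarrow L^m(\Omega)$ and $H^1\hookrightarrow L^\mu(\Gamma_1)$, following the framework of \cite{Dresda2}, and the damping terms vanish anyway because $u_t=0$. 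If those embeddings were not available for some exponents, I would instead approximate $\phi$ in $H^1$ by bounded functions, or simply note that $\alpha^{1/m}\psi_t\in L^m$ is required only when $\alpha>0$, and use \eqref{1.13} there. With this choice of $\psi$, the reduced identity becomes $\langle A(u_0),\phi\rangle\int_0^\infty\theta=\langle A(u_0),\phi\rangle=0$. This is \eqref{3.3}, and it completes the argument.

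The only real obstacle is confirming that $\psi=\theta\phi$ is admissible for every $\phi\in H^1$. Once that is settled, the lemma is immediate.
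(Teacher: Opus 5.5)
Your ``if'' direction and the reduction of \eqref{3.2} to $\int_0^\infty\langle dI(u_0),\psi(t)\rangle_{H^1}\,dt=0$ are fine. The ``only if'' direction, however, rests on a false claim.

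Assumption \eqref{1.13} restricts $p$ and $q$ in terms of $m$ and $\mu$. It does not give $H^1\hookrightarrow L^m(\Omega)$ or $H^1\hookrightarrow L^\mu(\Gamma_1)$. For $\alpha>0$ and $N\ge 3$ it is equivalent to $\overline m\ge \romega/(\romega+1-p)$, which is a \emph{lower} bound on $m$, so arbitrarily large $m$ are allowed. The same holds for $\mu$ when $\beta>0$ and $N\ge 4$. If $m>\romega$, there are $\phi\in H^1_0(\Omega)\subset H^1$ with $\phi\notin L^m(\Omega)$. For such $\phi$, $\psi=\theta\phi$ is not admissible in Definition~\ref{Definition 1}, since $\psi_t=\theta'\phi\notin L^m_{\loc}([0,\infty);L^m(\Omega))$ unless $\theta\equiv 0$. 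Your second hedge (``use \eqref{1.13} there'') repeats the same error.

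Your first hedge is the right repair, but it has to be carried out. Test only with $\phi\in H^1$ such that $\phi\in L^\infty(\Omega)$ and $\phi_{|\Gamma}\in L^\infty(\Gamma)$, obtain $\langle dI(u_0),\phi\rangle_{H^1}=0$ for these, and conclude by density and the continuity of $dI(u_0)\in(H^1)'$. Density can come from the truncations $\phi_k=\max\{-k,\min\{k,\phi\}\}$. You then need to check three things:
\begin{itemize}
\item $(\phi_k)_{|\Gamma}$ is the truncation of $\phi_{|\Gamma}$;
\item it lies in $H^1(\Gamma)$ with $\nabla_\Gamma\phi_k=\chi_{\{|\phi|<k\}}\nabla_\Gamma\phi$, and it still vanishes on $\Gamma_0$;
\item $\phi_k\to\phi$ in $H^1$, by dominated convergence.
\end{itemize}

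The paper avoids this altogether. It uses Lemma~3.3 of \cite{Dresda2}, which extends the weak formulation to test functions on $[0,T]$ that need not vanish at the endpoints (identity \eqref{3.4}), and takes the time-constant $\psi\equiv\phi$. Then $\psi_t=0$, so the $L^m$/$L^\mu$ requirements are vacuous for every $\phi\in H^1$. The boundary term vanishes because $u_t=0$, and \eqref{3.4} reduces to $T\langle dI(u_0),\phi\rangle_{H^1}=0$.

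Once patched, your route is self-contained at the cost of the truncation argument. The paper's route is immediate at the cost of relying on the external lemma.
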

\begin{proof}If $u_0$ is a weak solution of \eqref{1.1}, by \eqref{3.3}, one immediately gets that $u\equiv u_0$ satisfies \eqref{3.2}, so it is a weak stationary solution  of \eqref{1.2}. To prove the converse we recall that, by \cite[Lemma 3.3, p. 4896]{Dresda2}, any weak solution $u$ of \eqref{1.2} satisfies, for all $T>0$ and $\psi\in C([0,T];H^1)\cap C^1([0,T]; H^0)$ such that $\alpha^{1/m}\psi_t\in L^m_\loc([0,\infty); L^m(\Omega))$, $\beta^{1/\mu}\psi_t\in L^\mu_\loc([0,\infty);L^\mu(\Gamma_1))$, the distribution identity
\begin{multline}\label{3.4}
  \left[\int_\Omega u_t\psi+\int_{\Gamma_1}u_t\psi\right]_0^T+\int_0^T\left[-\int_\Omega u_t\psi_t-\int_{\Gamma_1}u_t\psi_t +\int_\Omega \nabla u \nabla\psi\right.\\
  \left.  +\int_{\Gamma_1}\negquad(\nabla_\Gamma u,\nabla_\Gamma\psi)_\Gamma+\int_\Omega\negquad \alpha |u_t|^{m-2}u_t\psi+\int_{\Gamma_1}\negquad\beta |u_t|^{\mu-2}u_t\psi -\int_\Omega\negquad f(u)\psi-\int_{\Gamma_1}\negquad g(u)\psi \right]=0.
\end{multline}
Hence, when $u\equiv u_0\in H^1$ is a stationary weak solution of \eqref{1.2}, taking in \eqref{3.4} test functions $\psi\equiv \phi\in H^1$, for an arbitrary $T>0$, we get \eqref{3.3}.
\end{proof}
The following result shows that equation \eqref{3.3}  has a variational structure and gives some properties which will be used in the sequel.
\begin{lem}\label{Lemma 4}  Let assumptions (A1--4) hold, and let the functionals $I,J:H^1\to\R$ be respectively defined by \eqref{1.18} and
\begin{equation}\label{3.5}
 J(u)=\int_\Omega F(u)+\int_{\Gamma_1} G(u).
\end{equation}
Them $I,J\in C^1(H^1)$ and $dJ\in C(H^1;(H^1)')$ is compact. Moreover, critical points of $I$ coincide with weak solutions of problem \eqref{1.1}.
\end{lem}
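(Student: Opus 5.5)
The plan is to split $J=J_\Omega+J_\Gamma$ with $J_\Omega(u)=\int_\Omega F(u)$ and $J_\Gamma(u)=\int_{\Gamma_1}G(u)$. Each piece will be handled as a composition of a compact linear embedding with a Nemytskii operator. The quadratic part $\tfrac12\|u\|_{H^1}^2$ of $I$ is trivially smooth by Lemma~\ref{Lemma 1}, so $I\in C^1(H^1)$ will follow once $J\in C^1(H^1)$.

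For the interior term, take the embedding $H^1\hookrightarrow H^1(\Omega)\hookrightarrow L^p(\Omega)$. It is compact because $p<\romega$ (Rellich--Kondrachov, valid for $C^1$ domains). By \eqref{2.18} we have $|f(u)|\le c_1(1+|u|^{p-1})$. The standard Nemytskii theory (Krasnosel'skii) then gives three facts: $u\mapsto f(u)$ is continuous from $L^p(\Omega)$ to $L^{p'}(\Omega)$; the functional $u\mapsto\int_\Omega F(u)$ is $C^1$ on $L^p(\Omega)$; and its differential is $\langle dJ_\Omega(u),\phi\rangle=\int_\Omega f(u)\phi$.

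For the boundary term, use the map $u\mapsto u_{|\Gamma}$ from $H^1$ into $H^1_{\Gamma_0}(\Gamma)\subset H^1(\Gamma)$. Compose it with the embedding $H^1(\Gamma)\hookrightarrow L^q(\Gamma)$, which is compact since $\Gamma$ is a compact $(N-1)$-dimensional $C^1$ manifold and $q<\rgamma$; this can be checked via local charts and a partition of unity. Its image lies in $L^q(\Gamma_1)$ by \eqref{1.15}. The same Nemytskii argument with $|g(u)|\le c_2(1+|u|^{q-1})$ gives $J_\Gamma\in C^1$ with $\langle dJ_\Gamma(u),\phi\rangle=\int_{\Gamma_1}g(u)\phi$.

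Compactness of $dJ$ then follows by writing $dJ=T_\Omega^*\circ N_f\circ T_\Omega+T_\Gamma^*\circ N_g\circ T_\Gamma$. Here $T_\Omega$ and $T_\Gamma$ are the compact embeddings, and $N_f$, $N_g$ are the continuous, bounded Nemytskii maps. Let $(u_n)$ be bounded in $H^1$. Compactness of $T_\Omega$ gives a subsequence with $T_\Omega u_n$ convergent in $L^p$. Continuity of $N_f$ then makes $N_f(T_\Omega u_n)$ converge in $L^{p'}$, and applying the bounded operator $T_\Omega^*$ gives convergence in $(H^1)'$. The boundary term is handled identically.

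Finally, $\langle dI(u),\phi\rangle=(u,\phi)_{H^1}-\int_\Omega f(u)\phi-\int_{\Gamma_1}g(u)\phi$. So $dI(u)=0$ is exactly \eqref{3.3}, and critical points of $I$ coincide with weak solutions of \eqref{1.1} (Definition~\ref{Definition 2}).

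The main obstacle I expect is justifying the boundary embedding $H^1(\Gamma)\hookrightarrow L^q(\Gamma)$ as compact when $\Gamma$ is only $C^1$. This includes the case $N=2,3$, where $\rgamma=\infty$ and any finite $q$ is allowed. I would handle it by flattening with $C^1$ charts, which preserves $H^1$ up to equivalent norms, and then applying the Euclidean Rellich theorem in dimension $N-1$.
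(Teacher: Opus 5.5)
Your proposal is correct and follows essentially the same route as the paper. The paper invokes the classical Nemytskii/potential-operator results (Ambrosetti--Prodi, Ambrosetti--Malchiodi) in the interior and on the boundary, then composes with $H^1\hookrightarrow H^1(\Omega)$ and the trace, which is exactly the factorization you spell out; one difference in your favour is that the paper factors the boundary term through $H^1(\Gamma_1)$, whose Rellich-type embedding into $L^q(\Gamma_1)$ may require some regularity of $\partial\Gamma_1$, whereas your route through $H^1(\Gamma)$ (traces of elements of $H^1$ lie in $H^1_{\Gamma_0}(\Gamma)$) needs none.
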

\begin{proof}By classical arguments, see \cite[Chapter 1, Theorem~2.9, p. 22]{ambrosettiprodi} and
\cite[Chapter 1, Theorem~1.8, p. 7]{ambrosettimalchiodi}, the potential operator $\mathcal{F}:H^1(\Omega)\to\R$, defined by
$\mathcal{F}(u)=\int_\Omega F(u)$, is Fr\'{e}chet differentiable, one has
\begin{equation}\label{3.6}
  \langle dF(u),\phi\rangle_{H^1(\Omega)}=\int_\Omega f(u)\phi\qquad\text{for all $u,\phi\in H^1(\Omega)$,}
\end{equation}
and, finally, $d\mathcal{F}:H^1(\Omega)\to [H^1(\Omega)]'$ is continuous and compact.
The same arguments show that the potential operator $\mathcal{G}:H^1(\Gamma_1)\to\R$, defined by
$\mathcal{G}(v)=\int_{\Gamma_1} G(v)$, is Fr\'{e}chet differentiable, one has
\begin{equation}\label{3.7}
  \langle d\mathcal{G}(v),\psi\rangle_{H^1(\Gamma_1)}=\int_{\Gamma_1}g(v)\psi\qquad\text{for all $v,\psi\in H^1(\Gamma_1)$,}
\end{equation}
and $d\mathcal{G}:H^1(\Gamma_1)\to [H^1(\Gamma_1)]'$ is continuous and compact as well.
Since $H^1\hookrightarrow H^1(\Omega)$, $\Tr\in\mathcal{L}(H^1; H^1(\Gamma_1))$, and $J(u)=\mathcal{F}(u)+\mathcal{G}(\Tr u)$, using \eqref{3.6} and \eqref{3.7}, we get that $J$ is Fr\'{e}chet differentiable, one has
\begin{equation}\label{3.8}
  \langle dJ(u),\phi\rangle_{H^1}=\int_\Omega f(u)\phi+\int_{\Gamma_1}g(u)\phi\qquad\text{for all $u,\phi\in H^1$,}
\end{equation}
and $dJ:H^1\to [H^1]'$ is continuous and compact as well.
Since, by \eqref{1.18}, \eqref{2.12} and \eqref{3.5} we have $I(u)=\tfrac 12\|u\|_{H^1}^2-J(u)$, we get that
$I\in C^1(H^1)$, with
\begin{equation}\label{3.9}
\begin{aligned}
\langle dI(u),\phi\rangle_{H^1}&=(u,\phi)_{H^1}-\langle dJ(u),\phi\rangle_{H^1}\\
&
=\int_\Omega \nabla u\nabla\phi+\int_{\Gamma_1}(\nabla_\Gamma u,\nabla_\Gamma \phi)_\Gamma-\int_{\Omega}f(u)\phi-\int_{\Gamma_1}g(u)\phi
\end{aligned}
\end{equation}
for all $u,\phi\in H^1$, and we also get that $dI:H^1\to(H^1)'$ is compact.
By comparing  \eqref{3.3} with  \eqref{3.9} one immediately  gets that \eqref{3.3} can be rewritten
as $dI(u)=0$, concluding the proof.
\end{proof}
We are now going to check some  properties of $I$.
\begin{lem}\label{Lemma 5} Let  (A1--4) hold. Then  $I$ satisfies the assumptions i)-iii) of Theorem~\ref{Theorem 7} and   iv) of Theorem~\ref{Theorem 8}.
Moreover, when $\lim_{u\to\pm\infty} \sigma(u)>0$, it also satisfies the assumption v) of Theorem~\ref{Theorem 9}.
\end{lem}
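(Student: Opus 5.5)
Property i) is immediate, since $F(0)=G(0)=0$ gives $I(0)=0$. For ii) and iv) I will use the upper bound \eqref{2.25} of Lemma~\ref{Lemma 2}. It yields
\[
I(u)\ge \tfrac12\|u\|_{H^1}^2-\tfrac12 a_0\Big(\|u\|_2^2+\|u\|_{2,\Gamma_1}^2\Big)-c_8\Big(\|u\|_p^p+\|u\|_{q,\Gamma_1}^q\Big).
\]
The key point is the variational characterization of $\lambda_1$ in \eqref{1.5} from \cite{Eigenvalues}, namely
\[
\lambda_1=\inf_{u\in H^1\setminus\{0\}}\frac{\|u\|_{H^1}^2}{\|u\|_2^2+\|u\|_{2,\Gamma_1}^2}.
\]
This gives $\|u\|_2^2+\|u\|_{2,\Gamma_1}^2\le \lambda_1^{-1}\|u\|_{H^1}^2$. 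When $a_0\ge 0$ the quadratic part is therefore bounded below by $\tfrac12(1-a_0^+/\lambda_1)\|u\|_{H^1}^2$. When $a_0<0$ the quadratic part is simply $\ge \tfrac12\|u\|_{H^1}^2$. Since $p<\romega$ and $q<\rgamma$, the embeddings $H^1\hookrightarrow L^p(\Omega)$ and $H^1\hookrightarrow L^q(\Gamma_1)$ are continuous (by Lemma~\ref{Lemma 1} and the trace/Sobolev theorems). Hence
\[
I(u)\ge c\|u\|_{H^1}^2-C\big(\|u\|_{H^1}^p+\|u\|_{H^1}^q\big),\qquad c>0.
\]
Because $p,q>2$, choosing $\rho$ small gives $I(u)\ge\eta>0$ on $\|u\|_{H^1}=\rho$, which is ii). The same bound also gives $I(u)>0$ for $0<\|u\|_{H^1}\le\rho$, which is iv).

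For iii) I will use the lower bounds \eqref{2.23}. By \eqref{1.9} and \eqref{2.27}--\eqref{2.28}, at least one of $C_f^\pm$, $C_g^\pm$ is positive. Suppose $C_f^+>0$ (the other cases are symmetric). I pick a nonnegative $u\in H^1\setminus\{0\}$, for instance $u\in C^\infty_c(\Omega)$, $u\ge0$. If instead $C_f^->0$, I take $u\le 0$. If only $C_g^\pm>0$, I need $u\ge 0$ (or $\le 0$) with nonzero trace on $\Gamma_1$; the positive part of a smooth function supported near a point of $\Gamma_1$ works, since $H^1$ is closed under $u\mapsto u^+$. Then, for $\lambda>0$,
\[
I(\lambda u)\le \tfrac{\lambda^2}2\|u\|_{H^1}^2+c_7\lambda^2\big(\|u\|_2^2+\|u\|_{2,\Gamma_1}^2\big)-\lambda^{p_0}C_f^+\|u\|_{p_0}^{p_0}-\lambda^{q_0}C_g^+\|u\|_{q_0,\Gamma_1}^{q_0}.
\]
This tends to $-\infty$ as $\lambda\to\infty$ because $p_0,q_0>2$. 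So $l=\lambda u$ with $\lambda$ large satisfies $\|l\|_{H^1}>\rho$ and $I(l)<0$, which is iii).

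For v), assume $\lim_{u\to\pm\infty}\sigma(u)>0$, so $C_f^+,C_f^->0$ and $C_f:=\min\{C_f^+,C_f^-\}>0$. Dropping the boundary term (legitimate since $C_g^\pm\ge0$) gives
\[
I(u)\le \tfrac12\|u\|_{H^1}^2+c_7\big(\|u\|_2^2+\|u\|_{2,\Gamma_1}^2\big)-C_f\|u\|_{p_0}^{p_0}\le C\|u\|_{H^1}^2-C_f\|u\|_{p_0}^{p_0}.
\]
On a finite dimensional subspace $Y$, $\|\cdot\|_{p_0}$ is a norm. This is the step needing care: a nonzero $u\in H^1$ with $\|u\|_{p_0}=0$ vanishes a.e. in $\Omega$, hence has zero gradient and zero trace, so $u=0$. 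By equivalence of norms on $Y$ we get $\|u\|_{p_0}\ge c_Y\|u\|_{H^1}$. Since $p_0>2$, $I(u)\le C\|u\|_{H^1}^2-C_fc_Y^{p_0}\|u\|_{H^1}^{p_0}\le0$ for $\|u\|_{H^1}>R_Y$. I expect the only delicate points to be the use of the variational characterization of $\lambda_1$, for which I rely on \cite{Eigenvalues}, and the sign choice of test functions in iii).
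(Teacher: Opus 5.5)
Your proposal is correct and follows essentially the same route as the paper. Ingredients by item:
\begin{itemize}
\item ii) and iv): \eqref{2.25}, combined with the Rayleigh characterization \eqref{3.11} and the Sobolev/trace embeddings.
\item iii): the lower bounds \eqref{2.23} along a ray.
\item v): equivalence of norms on finite dimensional subspaces.
\end{itemize}

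There are two small refinements in your favour. First, you treat the case $a_0<0$ separately, whereas the paper's passage to \eqref{3.13} tacitly uses $a_0\ge 0$. Second, in iii) your one-signed test function needs only one of $C_f^\pm,C_g^\pm$ to be positive. The paper instead takes an arbitrary $v$ with $v_{|\Gamma_1}\not\equiv 0$ and relies on $\min\{C_f^+,C_f^-\}>0$ or $\min\{C_g^+,C_g^-\}>0$.
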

\begin{proof}
By \eqref{1.18} we have $I(0)=0$, so assumption  i) of Theorem~\ref{Theorem 7} holds. To check the assumption ii) of the same Theorem and assumption iv) of Theorem~\ref{Theorem 8} we are going to estimate $I$ from below.
 By \eqref{1.18} and Lemma~\ref{Lemma 2}, for any $u\in H^1$ we have
 \begin{equation}\label{3.10}
 \begin{aligned}
   I(u)&=\tfrac 12 \|u\|_{H^1}^2-\int_\Omega F(u)-\int_{\Gamma_1}G(u)\\
   &\ge \frac 12 \|u\|_{H^1}^2-\frac {a_0}2\|u\|_{H^0}^2-c_8\left(\|u\|_p^p+\|u\|_{q,\Gamma_1}^q\right).
 \end{aligned}
 \end{equation}
We are now going to apply the generalized Rayleigh formula proved in \cite[Theorem~1.2]{Eigenvalues} (see also \cite{EigenvaluesCorrigendum} for the necessity of the connectedness assumption to assure that $\lambda_1>0$), that is
\begin{equation}\label{3.11}
\lambda_1=\min_{u\in H^1\setminus\{0\}}\frac {\|u\|_{H^1}^2}{\|u\|_{H^0}^2},
\end{equation}
so that we have
\begin{equation}\label{3.12}
\|u\|_{H^1}^2\ge \lambda_1 \|u\|_{H^0}^2\qquad\text{for all $u\in H^1$.}
\end{equation}
By combining \eqref{3.10} with \eqref{3.12} we get the estimate
 \begin{equation}\label{3.13}
   I(u)\ge \frac {\lambda_1-a_0}{2\lambda_1}\|u\|_{H^1}^2-c_8\left(\|u\|_p^p+\|u\|_{q,\Gamma_1}^q\right)\qquad\text{for all $u\in H^1$.}
 \end{equation}
Now, by applying  the  Sobolev Embedding Theorem, both in $\Omega$ and on $\Gamma$, we get the existence of positive constants
$c_{11}=c_{11}(\Omega,p)$ and $c_{12}=c_{12}(\Omega,q)$ such that
\begin{equation}\label{3.13bis}
\|u\|_p\le c_{11} \|u\|_{H^1(\Omega)},\qquad\text{and}\quad \|v\|_{q,\Gamma_1}\le c_{12} \|v\|_{H^1(\Gamma_1)}
\end{equation}
for all $u\in H^1(\Omega)$ and  $v\in H^1(\Gamma_1)$.
hence, using Lemma~\ref{Lemma 1}, the Embedding $H^1\hookrightarrow H^1(\Omega)$ and the bondedness of the Trace Operator from $H^1$ into $H^1(\Gamma_1)$, there is a positive constant $c_{13}=c_{13}(\Omega,p,q)$ such that
\begin{equation}\label{3.14}
\|u\|_p\le c_{13} \|u\|_{H^1},\qquad\text{and}\quad \|u\|_{q,\Gamma_1}\le c_{13} \|u\|_{H^1}\quad\text{for all $u\in H^1$.
}
\end{equation}
By \eqref{3.13} and \eqref{3.14}, we get the estimate
\begin{equation}\label{3.15}
I(u)\ge \|u\|_{H^1}^2\left(\frac {\lambda_1-a_0}{2\lambda_1}-c_8c_{13}^p\|u\|_{H^1}^{p-2}-c_8c_{13}^q\|u\|_{H^1}^{q-2}\right).
\end{equation}
Hence, denoting $\varrho=\|u\|_{H^1}$, since $p,q>2$  and $a_0<\lambda_1$,  \eqref{3.15} yields
\begin{equation}\label{3.16}
\frac{I(u)}{\varrho^2}\ge \frac {\lambda_1-a_0}{2\lambda_1}-c_8c_{13}^p\varrho^{p-2}-c_8c_{13}^q\varrho^{q-2}.
\end{equation}
Now, we fix  $\rho$ so small that
$c_8\left(c_{13}^p\rho^{p-2}+c_{13}^q\rho^{q-2}\right)\le \frac {\lambda_1-a_0}{4\lambda_1}$.
Hence, when $\varrho=\rho$, by \eqref{3.16}  we get $I(u)\ge \eta:=\frac{\lambda_1-a_0}{4\lambda_1}\rho^2>0$,
proving assumption ii) of Theorem~\ref{Theorem 7}. The estimate \eqref{3.16} also proves that, when $\varrho\le \rho$, we have $I(u)>0$, so proving assumption iv) of Theorem~\ref{Theorem 8}.

To check assumption iii) of Theorem~\ref{Theorem 7} we are going to estimate $I$ from above. By \eqref{1.18} and Lemma~\ref{Lemma 2}, \eqref{2.23}, for all $u\in H^1$ we have
$$I(u)\le \tfrac 12 \|u\|_{H^1}^2+c_7\|u\|_{H^0}^2-C_f^+\|u^+\|_{p_0}^{p_0}-C_f^-\|u^-\|_{p_0}^{p_0}
-C_g^+\|u^+\|_{q_0,\Gamma_1}^{q_0}-C_g^-\|u^-\|_{q_0,\Gamma_1}^{q_0}.$$
Now we set
\begin{equation}\label{3.17}
C_f=\min\{C_f^+,C_f^-\}\ge 0\qquad\text{and}\quad C_g=\min\{C_g^+,C_g^-\}\ge 0.
\end{equation}
By using \eqref{3.12} we obtain
\begin{equation}\label{3.18}
I(u)\le \left(\tfrac 12 +\tfrac {c_7}{\lambda_1}\right)\|u\|_{H^1}^2  -C_f\|u\|_{p_0}^{p_0}-C_g\|u\|_{q_0,\Gamma_1}^{q_0}\qquad\text{for all $u\in H^1$.}
\end{equation}
Hence, by choosing $v\in H^1$ such that $v_{|\Gamma_1}\not\equiv 0$ and taking $u=tv$, with $t>0$, in \eqref{3.18}, we get
\begin{equation}\label{3.19}
I(tv)\le \left(\tfrac 12 +\tfrac {c_7}{\lambda_1}\right)\|v\|_{H^1}^2  t^2 -C_f\|v\|_{p_0}^{p_0}t^{p_0}-C_g\|v\|_{q_0,\Gamma_1}^{q_0}t^{q_0}.
\end{equation}
Since $v_{|\Gamma_1}\not\equiv 0$, we have $\|v\|_{p_0}>0$ and $\|v\|_{q_0,\Gamma_1}>0$. Moreover, by assumption (A4),  Lemma~\ref{Lemma 2} and \eqref{3.17}, at least one between $C_f$ and $C_g$ is strictly positive. Then, since $p_0,q_0>2$, by \eqref{3.19} it follows that $I(tv)\to-\infty$ as $t\to\infty$. Hence, choosing $l=tv$ for $t>0$ large enough, we have $\|l\|_{H^1}>\rho$ and $I(l)\le 0$, so assumption iii) of Theorem~\ref{Theorem 7} holds true.

We are finally going to check that, when $\lim_{u\to\pm\infty} \sigma(u)>0$, also  assumption v) of Theorem~\ref{Theorem 9} holds true.
We first notice that, by Lemma~\ref{Lemma 1}, in this case we have $C_f>0$. Since $C_g\ge 0$, by \eqref{3.18} we thus have
\begin{equation}\label{3.20}
I(u)\le c_{14}\|u\|_{H^1}^2  -C_f\|u\|_{p_0}^{p_0}\qquad\text{for all $u\in H^1$,}
\end{equation}
where $c_{14}:=\left(\tfrac 12 +\tfrac {c_7}{\lambda_1}\right)>0$. Now let $Y$ be a finite dimensional subspace of $H^1$.  Since $\|\cdot\|_{p_0}$ is a norm on it, there is a positive constant $c_{15}=c_{15}(p_0,Y)$ such that $\|u\|_{H^1}\le c_{15}\|u\|_{p_0}$ for all $u\in Y$. Then, by \eqref{3.20} we get
$$
I(u)\le \|u\|_{H^1}^2\left(c_{14}  -C_f c_{15}^{-p_0}\|u\|_{H^1}^{p_0-2}\right)\qquad\text{for all $u\in Y$,}
$$
so $I(u)\le 0$ provided $u\in Y$ and $\|u\|_{H^1}\ge R_Y$, where $R_Y:=\left(c_{14}c_{15}^{p_0}/C_f\right)^{\frac 1{p_0-2}}\negquad >0$.
\end{proof}
\begin{lem}\label{Lemma 6} If assumptions (A1--4) hold, $I$ satisfies the (PS) condition.
\end{lem}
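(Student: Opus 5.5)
The plan is the standard Ambrosetti--Rabinowitz scheme. First show that every (PS) sequence is bounded in $H^1$; then conclude with compactness, using that $dJ$ is compact by Lemma~\ref{Lemma 4}. Take a (PS) sequence $(u_n)_n$ with $|I(u_n)|\le C$ and $dI(u_n)\to0$ in $(H^1)'$. The key quantity is
$$I(u_n)-\tfrac1\mu\langle dI(u_n),u_n\rangle_{H^1},\qquad \mu:=\min\{p_0,q_0\}>2.$$
By \eqref{3.9} and \eqref{2.29} it equals
$$\left(\tfrac12-\tfrac1\mu\right)\left[\|u_n\|_{H^1}^2-\int_\Omega \bigl(\ell^+(u_n^+)^2+\ell^-(u_n^-)^2\bigr)-\int_{\Gamma_1}\bigl(\mathfrak m^+(u_n^+)^2+\mathfrak m^-(u_n^-)^2\bigr)\right]+\int_\Omega\Bigl(\tfrac1\mu h(u_n)u_n-H(u_n)\Bigr)+\int_{\Gamma_1}\Bigl(\tfrac1\mu k(u_n)u_n-K(u_n)\Bigr).$$
The quadratic part is bounded below by $\left(\tfrac12-\tfrac1\mu\right)\bigl(1-a_M^+/\lambda_1\bigr)\|u_n\|_{H^1}^2$, using \eqref{3.12} and $a_M<\lambda_1$. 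Here $a_M^+=\max\{a_M,0\}$; if $a_M\le 0$ the bound is trivial, since the subtracted terms are then nonnegative.

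Next handle the $h,k$ terms. For $|u|\ge M$, assumption (A3) together with $\mu\le p_0$ and $h(u)u\ge0$ (by \eqref{2.19}) gives $\tfrac1\mu h(u)u-H(u)\ge \tfrac1{p_0}h(u)u-H(u)\ge0$. For $|u|\le M$ the quantity is bounded by a constant, by continuity. So these integrals are bounded below by $-C(|\Omega|+\mathcal H^{N-1}(\Gamma_1))$, and the same holds for $k$ with $q_0$.

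Combining, we get
$$c\|u_n\|_{H^1}^2\le C+C'+\tfrac1\mu\|dI(u_n)\|_{(H^1)'}\|u_n\|_{H^1}.$$
Since $\|dI(u_n)\|\to0$, this yields boundedness of $(u_n)_n$ in $H^1$. This step is the only real obstacle. The care needed is the splitting via \eqref{1.7}, so that the linear parts of $f,g$ near zero (which may have either sign) are absorbed by the eigenvalue bound, while only $h,k$ carry the superquadratic condition.

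With $(u_n)_n$ bounded and $H^1$ a Hilbert space, extract a weakly convergent subsequence. By Lemma~\ref{Lemma 4}, $dJ$ is compact, so $dJ(u_n)\to \chi$ strongly in $(H^1)'$ along a further subsequence. Using the Riesz identification \eqref{2.13} and Remark~\ref{Remark 3}, \eqref{3.9} reads $u_n=\nabla I(u_n)+R\,dJ(u_n)$, where $R$ is the Riesz isomorphism for the scalar product \eqref{2.12}. The right-hand side converges strongly, hence $u_n$ converges strongly in $H^1$, which proves the (PS) condition.
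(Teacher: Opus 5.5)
Your proof is correct and follows the paper's argument: you use the same combination $\tau_0 I(u)-\langle dI(u),u\rangle_{H^1}$ with $\tau_0=\min\{p_0,q_0\}$, and the same eigenvalue bound via \eqref{3.12} on the quadratic part. As in the paper, you then apply (A3) on $\{|u|\ge M\}$ with a uniform constant on $\{|u|<M\}$, and conclude with the compactness of $dJ$ through the Riesz map. Your use of $a_M^+$ in place of $a_M$ is actually slightly more careful than the paper's estimate \eqref{3.26}, which as written is valid only when $a_M\ge 0$.
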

\begin{proof}We preliminarily introduce the positively $2$--homogeneous functional $A:H^1\to\R$ defined by
\begin{equation}\label{3.23}
  A(u)=\ell^+\|u^+\|_2^2+\ell^-\|u^-\|_2^2+\mathfrak{m}^+\|u^+\|_{2,\Gamma_1}^2+\mathfrak{m}^-\|u^-\|_{2,\Gamma_1}^2.
\end{equation}
By \eqref{2.19} we have
\begin{equation}\label{3.24}
\begin{alignedat}2
&\int_\Omega F(u)+\int_{\Gamma_1}G(u)=&& \quad\tfrac 12 A(u)+\int_\Omega H(u)+\int_{\Gamma_1} K(u),\\
&\int_\Omega f(u)u+\int_{\Gamma_1}g(u)u=&&\quad A(u)+\int_\Omega h(u)u+\int_{\Gamma_1} k(u)u,
\end{alignedat}\quad\text{for all $u\in H^1$.}
\end{equation}
We also point out that, by \eqref{2.30}, we have
\begin{equation}\label{3.25}
  A(u)\le a_M\|u\|_{H^0}^2\qquad\text{for all $u\in H^0$.}
\end{equation}
We now start by an estimate. Let $\tau_0:=\min\{p_2,q_0\}>2$. By \eqref{1.18} and \eqref{3.9}, for all $u\in H^1$  we have
$$\tau_0 I(u)-\langle dI(u),u\rangle_{H^1}=\left(\frac {\tau_0}2-1\right)\|u\|_{H^1}^2+\int_\Omega \left[f(u)u-\tau_0 F(u)\right]+\int_{\Gamma_1} \left[g(u)u-\tau_0 G(u)\right].$$
Consequently, by \eqref{3.24}, we get
\begin{align*}
\tau_0 I(u)-\langle dI(u),u\rangle_{H^1}=&\left(\frac {\tau_0}2-1\right)\|u\|_{H^1}^2+\left(1-\frac{\tau_0}2\right)A(u)+\int_\Omega \left[h(u)u-\tau_0 H(u)\right]\\
&+\int_{\Gamma_1} \left[k(u)u-\tau_0 K(u)\right]\qquad\text{for all $u\in H^1$.}
\end{align*}
Hence, by  using \eqref{3.25} and then \eqref{3.12}, we obtain
\begin{equation}\label{3.26}
\begin{aligned}
\tau_0 I(u)-\langle dI(u),u\rangle_{H^1}&\ge \left(\frac {\tau_0}2-1\right)\left[\|u\|_{H^1}^2-a_M \|u\|_{H^2}^2\right]\\
&+\int_\Omega \left[h(u)u-\tau_0 H(u)\right]+\int_{\Gamma_1} \left[k(u)u-\tau_0 K(u)\right]\\
&\ge \dfrac {(\tau_0-2)(\lambda_1-a_M)}{2\lambda_1}\|u\|_{H^1}^2+\int_\Omega \left[h(u)u-\tau_0 H(u)\right]\\
&+\int_{\Gamma_1} \left[k(u)u-\tau_0 K(u)\right]\qquad\text{for all $u\in H^1$.}
\end{aligned}
\end{equation}
Recalling the assumption (A3), for any $u\in H^1$ we now introduce the sets
\begin{alignat*}2
&\Omega_u^+=\{x\in\Omega: |u(x)|\ge M\},&&\qquad \Omega_u^-=\{x\in\Omega: |u(x)|< M\},\\
&\Gamma_{1,u}^+=\{x\in\Gamma_1: |u(x)|\ge M\},&&\qquad \Gamma_{1,u}^-=\{x\in\Gamma_1: |u(x)|< M\}.
\end{alignat*}
The first two of them are measurable in $\Omega$ and defined a.e. with respect to the Lebesgue measure, the last two of them verify the same properties with respect to $\mathcal{H}^{N-1}$ on $\Gamma_1$. By using \eqref{1.8} and  \eqref{2.19} in \eqref{3.26}, since $p_0,q_0\ge \tau_0$, we get
\begin{align*}
\tau_0 I(u)-\!\langle dI(u),u\rangle_{H^1}\!&\ge \dfrac {(\tau_0-2)(\lambda_1-a_M)}{2\lambda_1}\|u\|_{H^1}^2+\int_\Omega \left[h(u)u-p_0 H(u)\right]\\
&+\int_{\Gamma_1} \left[k(u)u-q_0 K(u)\right]\\
&\ge \dfrac {(\tau_0-2)(\lambda_1-a_M)}{2\lambda_1}\|u\|_{H^1}^2+\int_{\Omega_u^-} \left[h(u)u-p_0 H(u)\right]\\
&+\int_{\Gamma_{1,u}^-} \left[k(u)u-q_0 K(u)\right]\\
&\ge \dfrac {(\tau_0-2)(\lambda_1-a_M)}{2\lambda_1}\|u\|_{H^1}^2-p_0\int_{\Omega_u^-} H(u)-q_0\int_{\Gamma_{1,u}^-}  K(u)
\end{align*}
for all $u\in H^1$. Consequently, by \eqref{2.18}, we get the main estimate
\begin{equation}\label{3.27}
\begin{aligned}
\tau_0 I(u)-\!\langle dI(u),u\rangle_{H^1}\!
&\ge \dfrac {(\tau_0-2)(\lambda_1-a_M)}{2\lambda_1}\|u\|_{H^1}^2-p_0c_1\int_{\Omega_u^-} (1+|u|^p)\\
&-q_0c_2\int_{\Gamma_{1,u}^-} (1+|u|^q)\\
&\ge \dfrac {(\tau_0-2)(\lambda_1-a_M)}{2\lambda_1}\|u\|_{H^1}^2-c_{16},\qquad\text{for all $u\in H^1$,}
\end{aligned}
\end{equation}
where $c_{16}:=p_0c_1(1+M^p)|\Omega|+q_0c_2(1+M^q)\mathcal{H}^{N-1}(\Gamma_1)$.

 To prove the statement, let $(u_n)_n$ be a (PS) sequence in $H^1$. Then there are nonnegative constants $c_{17}$ and  $c_{18}$, both depending on  the sequence $(u_n)_n$, such that
\begin{equation}\label{3.28}
 |I(u_n)|\le c_{17},\qquad\text{and}\quad |\langle dI(u_n),u_n\rangle_{H^1}|\le c_{18} \|u_n\|_{H^1}\quad\text{for all $n\in\N$.}
\end{equation}
By simply combining \eqref{3.27} and \eqref{3.28} we get that
\begin{align*}
\|u_n\|_{H^1}^2\le &\frac {2\lambda_1}{(\tau_0-2)(\lambda_1-a_M)}\left[\tau_0 I(u_n)-\langle dI(u_n),u_n\rangle_{H^1}+c_{16}\right]\\
\le&\frac {2\lambda_1}{(\tau_0-2)(\lambda_1-a_M)}\left[\tau_0 c_{17}+c_{16}+c_{18}\|u_n\|_{H^1}\right]\quad\text{for all $n\in\N$,}
\end{align*}
which clearly yields that $(u_n)_n$ is bounded in $H^1$. Hence, up to a subsequence, $u_n\rightharpoonup u$ in $H^1$.
The compactness of the operator $dJ$ proved in Lemma~\ref{Lemma 4} yields that, up to a further subsequence,
$dJ(u_n)\to dJ(u)$ in $(H^1)'$, so by the continuity of the Riesz isomorphism we have $\nabla J(u_n)\to \nabla J(u)$. Now, by \eqref{3.9} and \eqref{2.13}, we have $\nabla I(u)=u-\nabla J(u)$ for all $u\in H^1$,
so $u_n=\nabla I(u_n)+\nabla J(u_n)$ and, since $\nabla I(u_n)\to 0$, we get $u_n\to \nabla J(u)$, concluding the proof.
\end{proof}
The following result is the main technical step in the proof of Theorem~\ref{Theorem 1}.
\begin{prop}\label{Proposition 1}Let assumptions (A1-4) hold. Then problem \eqref{1.1} has a weak solution $u\in H^1$ such that $I(u)=c>0$, where $c$ is given by \eqref{1.19bis}.
\end{prop}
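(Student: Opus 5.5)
The plan is to apply the variant of the Mountain Pass Theorem, Theorem~\ref{Theorem 8}, with $E=H^1$ endowed with the norm $\|\cdot\|_{H^1}$ of Lemma~\ref{Lemma 1} and $\mathcal{I}=I$. By Lemma~\ref{Lemma 4} we have $I\in C^1(H^1)$. Every hypothesis of Theorem~\ref{Theorem 8} has already been checked in the preceding lemmas:
\begin{itemize}
\item the (PS) condition is Lemma~\ref{Lemma 6};
\item assumptions i)--iii) of Theorem~\ref{Theorem 7} and assumption iv) of Theorem~\ref{Theorem 8} are given by Lemma~\ref{Lemma 5}.
\end{itemize}
Theorem~\ref{Theorem 8} then yields a critical point $u\in H^1$ of $I$ with $I(u)=c\ge\eta$, where $c$ is the level defined in \eqref{2.14}.

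Two identifications remain. First, the level in \eqref{2.14} with $\mathcal{I}=I$ and $E=H^1$ is literally the number $c$ defined in \eqref{1.19bis}. The class $\Sigma$ is nonempty, since the element $l$ from Lemma~\ref{Lemma 5} can be chosen with $I(l)<0$, because $I(tv)\to-\infty$. Second, by Lemma~\ref{Lemma 4}, critical points of $I$ are exactly the weak solutions of \eqref{1.1} in the sense of Definition~\ref{Definition 2}. Hence $u$ is a weak solution.

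Positivity is immediate: $c\ge\eta=\frac{\lambda_1-a_0}{4\lambda_1}\rho^2>0$, since $a_0<\lambda_1$. This also gives $u\neq0$, because $I(0)=0<c$.

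There is essentially no obstacle, since all the analytic work (the (PS) condition and the geometry) is already contained in Lemmas~\ref{Lemma 5} and \ref{Lemma 6}. The only point needing care is the matching of the definitions of $c$, in particular that \eqref{2.14} uses the strict inequality $\mathcal{I}(\sigma(1))<0$ exactly as in \eqref{1.19bis}.
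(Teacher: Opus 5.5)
Your proposal is correct and matches the paper's proof: the paper proves Proposition~\ref{Proposition 1} by citing Lemmas~\ref{Lemma 4}--\ref{Lemma 6} and applying Theorem~\ref{Theorem 8} directly. Your additional remarks only spell out what the paper leaves implicit, namely that \eqref{2.14} coincides with \eqref{1.19bis} and that $c\ge\eta>0$, which gives both positivity and $u\neq 0$.
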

\begin{proof} By Lemmas~\ref{Lemma 4}--\ref{Lemma 6}, we can  simply apply Theorem~\ref{Theorem 8}.
\end{proof}
\subsection{Proofs of Theorems~\ref{Theorem 1} and \ref{Theorem 2}} \label{section 3.2}
In this subsection we are going to characterize the Mountain Pass level given by \eqref{1.19bis}. We start by introducing the notation
$H^{1,\pm}_0(\Omega)=\{u\in H^1_0(\Omega): \,\,\pm u\ge 0\,\,\text{a.e. in $\Omega$}\}$,
and the following cones in the space $H^1$,  which depend on the behaviors of $\sigma$ in $[0,\infty)$ and
in $(-\infty,0]$,
\begin{equation}\label{3.29}
 \mathcal{S}=
 \begin{cases}
 \phantom{aa}\{0\},\quad &\text{when $\lim\limits_{u\to\pm \infty}\sigma(u)>0$,}\\
H^{1,-}_0(\Omega),\quad &\text{when $\lim\limits_{u\to\infty} \sigma(u)>0=\lim\limits_{u\to-\infty }\sigma(u)$,}\\
H^{1,+}_0(\Omega),\quad &\text{when $\lim\limits_{u\to-\infty} \sigma(u)>0=\lim\limits_{u\to\infty }\sigma(u)$,}\\
\phantom{a}H^1_0(\Omega),\quad &\text{when $\lim\limits_{u\to\pm \infty}\sigma(u)=0$.}
 \end{cases}
\end{equation}
We shall also denote $H^1_{\mathcal{S}}=H^1\setminus\mathcal{S}$.
\begin{lem}\label{Lemma 7}Let us assume that assumptions (A1--4) hold and set, for any $u\in H^1\setminus\{0\}$, the function $\theta_u:[0,\infty)\to\R$ by $\theta_u(t)=I(tu)$ for all  $t\ge 0$. Then $\theta_u(0)=0$,
    $\theta_u\in C^1([0,\infty)$ and $\theta_u$ can exhibit two different behaviors:
\renewcommand{\labelenumi}{{\roman{enumi})}}
\begin{enumerate}
\item when $u\in\mathcal{S}$ we have $\theta_u'>0$ in $(0,\infty)$ and $\lim\limits_{u\to\infty}\theta_u(t)=\infty$;
\item when $u\in H^1_{\mathcal{S}}$ there is $\tau_u>0$ such that $\theta_u'>0$ in $(0,\tau_u)$, $\theta_u'(\tau_u)=0$, and $\theta'_u<0$ in $(\tau_u,\infty)$, so  $\theta_u(\tau_u)=\max\limits_{[0,\infty)}\theta_u$. Moreover, it is $\lim\limits_{t\to\infty}\theta_u(t)=-\infty$.
\end{enumerate}
\end{lem}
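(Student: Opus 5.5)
Write $\theta_u(t)=I(tu)$ and compute, for $t>0$, using \eqref{3.24} and the positive 2-homogeneity of $A$,
\begin{equation*}
\theta_u'(t)=\langle dI(tu),u\rangle_{H^1}=t\Bigl[\|u\|_{H^1}^2-A(u)-\int_\Omega \sigma(tu)u^2-\int_{\Gamma_1}\xi(tu)u^2\Bigr]=:t\,\Phi_u(t).
\end{equation*}
Here we use that $f(tu)tu=\ell^+(tu^+)^2+\ell^-(tu^-)^2+\sigma(tu)t^2u^2$, and the analogous identity for $g$. Clearly $\theta_u(0)=I(0)=0$, and $\theta_u\in C^1$ by Lemma~\ref{Lemma 4}.

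First, $\Phi_u(0^+)=\|u\|_{H^1}^2-A(u)\ge (1-a_M/\lambda_1)\|u\|_{H^1}^2>0$ by \eqref{3.25} and \eqref{3.12}. This limit is justified by dominated convergence, since $\sigma(tu)\to\sigma(0)=0$ and \eqref{2.20} gives domination in $L^1$ via Sobolev and trace embeddings. Next, by Remark~\ref{Remark0}, $\sigma$ and $\xi$ are nondecreasing in $|s|$ on each half-line. Hence, for a.e.\ fixed $x$, the maps $t\mapsto\sigma(tu(x))$ and $t\mapsto\xi(tu(x))$ are nondecreasing on $(0,\infty)$, so $\Phi_u$ is nonincreasing and continuous.

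For case i), let $u\in\mathcal S$. Since $u\in H^1_0(\Omega)$, it has zero trace, so the $\xi$ term vanishes. Moreover $u$ takes values only in a half-line where $\sigma\equiv0$, because the corresponding limit is $0$ and alternative i) of Remark~\ref{Remark0} applies. Hence $\Phi_u\equiv\|u\|_{H^1}^2-A(u)>0$, so $\theta_u'>0$. Also $\theta_u(t)=\tfrac{t^2}{2}(\|u\|_{H^1}^2-A(u))\to\infty$.

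For case ii), let $u\notin\mathcal S$. I claim that $\Phi_u(t)\to-\infty$ and that $\Phi_u$ is strictly decreasing wherever it is near zero, i.e.\ strictly decreasing on $\{t:\Phi_u(t)\le \Phi_u(0^+)/2\}$ or at least after its first zero. Divergence comes in two cases.

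\begin{itemize}
\item If $u_{|\Gamma_1}\not\equiv0$, then on a set of positive measure of $\Gamma_1$ we have $u>0$ or $u<0$. Suppose the corresponding $\xi$ limit is positive. Monotone convergence then gives $\int_{\Gamma_1}\xi(tu)u^2\to\infty$, since $\xi(s)\to\lim\xi>0$ and in fact unboundedly. Unboundedness follows from (A3): $k(s)s\ge q_0K(s)$ forces $K$ to grow at least like $|s|^{q_0}$ (cf.\ \eqref{2.32}), so $\xi(s)=k(s)/s\gtrsim |s|^{q_0-2}\to\infty$. The same holds for $\sigma$.
\item If the $\xi$ limits vanish on the relevant sign, or $u\in H^1_0(\Omega)$, then $u\notin\mathcal S$ means that $u$ has a nontrivial part, on a set of positive measure in $\Omega$, in a half-line where $\lim\sigma>0$. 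The same argument applies. There is a subtlety: if $u_{|\Gamma_1}\neq0$ only on a side where $\xi\equiv0$, one must still find an $\Omega$-part with $\sigma$ unbounded. This should follow since $u\notin H^1_0$ forces attention to \eqref{1.9}; I expect this case bookkeeping to need care, possibly using that the traces and interior values share a sign on positive-measure sets.
\end{itemize}

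Thus $\Phi_u$ has a zero $\tau_u$. For strictness I use (A4): wherever $\sigma(tu(x))>0$, we have $\sigma'(s)s>0$ at $s=tu(x)$, so $\tfrac{d}{dt}\sigma(tu(x))=\sigma'(tu)u>0$. Differentiating under the integral, justified by \eqref{2.22}, gives $\Phi_u'(t)=-\tfrac1t\bigl[\int_\Omega\sigma'(tu)tu\,u^2+\int_{\Gamma_1}\xi'(tu)tu\,u^2\bigr]$. At any $t$ with $\Phi_u(t)\le0$, the integrals $\int\sigma(tu)u^2+\int\xi(tu)u^2$ are positive, so some integrand with $\sigma(tu)>0$ or $\xi(tu)>0$ lives on a positive-measure set. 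There the derivative terms are strictly positive, so $\Phi_u'(t)<0$.

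Hence $\Phi_u$ crosses zero exactly once and stays negative after, which gives the sign pattern for $\theta_u'$. Finally $\theta_u(t)\to-\infty$ follows either from $\theta_u'(t)=t\Phi_u(t)\le -ct$ for large $t$, or directly from \eqref{3.19}-type bounds. The main obstacle is the divergence/case analysis of $\Phi_u$ for $u\notin\mathcal S$.
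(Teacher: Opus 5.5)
Your route is essentially the paper's. Your $\Phi_u$ is exactly $\Upsilon_u$ of \eqref{3.32}, its monotonicity comes from Remark~\ref{Remark0}, the growth $\xi(s)\gtrsim|s|^{q_0-2}$ obtained from (A3) is the paper's \eqref{2.32}, \eqref{3.36}, and (A4) supplies strictness.

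Your strictness step is correct and slightly more direct than the paper's. The paper introduces $t_u\le\tau_u\le T_u$ and excludes $\Upsilon_u\equiv0$ on $(\tau_u,T_u)$ by contradiction. You instead observe that wherever $\Phi_u(t)<\Phi_u(0)$, the set where $\sigma(tu)u^2>0$ or $\xi(tu)u^2>0$ has positive measure. On that set $\sigma'(tu)tu>0$ (resp.\ $\xi'(tu)tu>0$), so $\Phi_u'(t)<0$.

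The gap is the one you flag yourself. You do not prove $\Phi_u(t)\to-\infty$ for every $u\in H^1_{\mathcal S}$: your second bullet only asserts it, and the case of a nonzero trace lying only on a side where $\xi\equiv0$ is left open. The missing ingredient is how \eqref{1.9} is to be used. In this paper it means that \emph{both} limits of $\sigma$ at $\pm\infty$ are positive, \emph{or both} limits of $\xi$ are positive. That is how it is used in Lemma~\ref{Lemma 5}, where $\min\{C_f^+,C_f^-\}$ or $\min\{C_g^+,C_g^-\}$ is claimed positive, and in the example \eqref{2.15}--\eqref{2.16}. With this reading, argue along the four cases of \eqref{3.29}, as the paper does:
\begin{itemize}
\item If both $\sigma$-limits are positive, then $\mathcal S=\{0\}$ and $C_f^\pm>0$ by \eqref{2.27}. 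Then \eqref{3.37} gives $\int_\Omega\sigma(tu)u^2\to\infty$ for every $u\neq0$, whatever the trace does.
\item Otherwise both $\xi$-limits are positive, so $C_g^\pm>0$ by \eqref{2.28}. A nonzero trace makes $\int_{\Gamma_1}\xi(tu)u^2$ diverge. A zero trace means $u\in H^1_0(\Omega)\setminus\mathcal S$; by the definition of $\mathcal S$, a nontrivial part of $u$ then has the sign at which $\sigma$ has positive limit, so the interior integral diverges.
\end{itemize}
So your troublesome case can only occur when $\mathcal S=\{0\}$, where it is harmless.

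Your sign-sharing idea also closes it, even under a sign-by-sign reading of \eqref{1.9}. Since $\Tr(u^\pm)=(\Tr u)^\pm$, if the trace is positive on a set of positive measure and $\xi\equiv0$ on $[0,\infty)$, then $u^+\not\equiv0$ in $\Omega$ and $\lim_{s\to+\infty}\sigma(s)>0$. With this bookkeeping supplied, the rest of your argument goes through.
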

\begin{proof} We fix $u\in H^1\setminus\{0\}$ and consider $t\ge 0$. By \eqref{1.18}, \eqref{2.29} and \eqref{3.23} we have
\begin{equation}\label{3.30}
 \theta_u(t)=\tfrac 12 \left[\|u\|_{H^1}^2-A(u)\right]t^2-\int_\Omega H(tu)-\int_{\Gamma_1}K(tu).
\end{equation}
By \eqref{1.7} one has $\theta_u(0)=0$. By \eqref{2.17} and \eqref{2.18}, the same classical arguments used in Lemma~\ref{Lemma 4} show that $\theta_u\in C^1([0,\infty))$, with
\begin{equation}\label{3.31}
  \theta_u'(t)=\left[\|u\|_{H^1}^2-A(u)\right]t-\int_\Omega h(tu)u-\int_{\Gamma_1}k(tu)u=t\Upsilon_u(t),
\end{equation}
where ($\sigma$ and $\xi$ were continously extended to the whole of $\R$) we denote
\begin{equation}\label{3.32}
  \Upsilon_u(t)=\|u\|_{H^1}^2-A(u)-\int_\Omega \sigma(tu)u^2-\int_{\Gamma_1}\xi(tu)u^2\quad\text{for all $t\ge 0$.}
\end{equation}
Now, recalling that in Remark~\ref{Remark 4} the functions $u\mapsto\sigma'(u)u$ and $u\mapsto\xi'(u)u$ were continuosly extended to the whole of $\R$, and using \eqref{2.18}, the same classical arguments used above show that $\Upsilon_u\in C^1([0,\infty))$, with
\begin{equation}\label{3.33}
 \Upsilon_u'(t)=-\int_\Omega \sigma'(tu)u^3-\int_{\Gamma_1}\xi'(tu)u^3\quad\text{for all $t\ge 0$.}
\end{equation}
We point out that, by \eqref{3.12} and \eqref{3.25}, we have
 \begin{equation}\label{3.34}
  \Upsilon_u(0)=\|u\|_{H^1}^2-A(u)\ge \|u\|_{H^1}^2-a_M\|u\|_{H^0}^2\ge \frac{\lambda_1-a_M}{\lambda_1}\|u\|_{H^1}^2>0.
\end{equation}
Hence $\Upsilon$ is a $C^1$ decreasing function in $[0,\infty)$, with $\Upsilon_u(0)>0$. We are now going to calculate $\lim\limits_{t\to\infty}\Upsilon_u(t)$. With this aim, we first give some estimates for the functions $\Sigma_u,\Theta_u\in C^1(0,\infty))$, appearing in \eqref{3.32},  defined by
\begin{equation}\label{3.35}
  \Sigma_u(t)=\int_\Omega \sigma(tu)u^2, \quad\text{and}\quad \Theta_u(t)=\int_{\Gamma_1}\xi(tu)u^2\quad\text{for all $t\ge 0$.}
\end{equation}
By Lemma~\ref{Lemma 2}, formula \eqref{2.24}, we have
$$\frac{f(v)}{v}\ge -2c_7 +2 C_f^\pm |v|^{p_0-2}, \quad\text{and}\quad
\frac{g(v)}{v}\ge -2c_7 +2 C_g^\pm |v|^{q_0-2}$$
when $\pm v>0$. Hence, by using \eqref{1.12}, there is a nonnegative constant $c_{16}$, depending on $f,g,p,q,p_0$ and $q_0$, such that
\begin{equation}\label{3.36}
  \tfrac 12 \sigma(v)\ge -c_{16}+C_f^\pm |v|^{p_0-2},\,\,\text{and}\,\,
  \tfrac 12 \xi(v)\ge -c_{16}+C_g^\pm |v|^{q_0-2}, \quad\text{when $\pm v\ge 0$.}
\end{equation}
By combining \eqref{3.35} and \eqref{3.36} we thus get the estimates
\begin{equation}\label{3.37}
\begin{aligned}
\tfrac 12 \Sigma_u(t)\ge  &-c_{16}\|u\|_2^2 +\left[C_f^+\|u^+\|_{p_0}^{p_0}+C_f^-\|u^-\|_{p_0}^{p_0}\right]t^{p_0-2},\\
\tfrac 12 \Theta_u(t)\ge  &-c_{16}\|u\|_{2,\Gamma_1}^2 +\left[C_f^+\|u^+\|_{q_0,\Gamma_1}^{q_0}+C_f^-\|u^-\|_{q_0,\Gamma_1}^{q_0}\right]t^{q_0-2}.
\end{aligned}
\end{equation}
We have now to distinguish among the four cases in \eqref{3.29}, starting from the case in which $\lim\limits_{u\to\pm \infty}\sigma(u)>0$. By using Lemma~\ref{Lemma 2}, in this case we have $C_f^\pm>0$, so, by \eqref{3.37}, since $u\not\equiv 0$ and $p_0>2$, in this case we get $\Sigma_u(t)\to \infty$ as $t\to \infty$. Since we have $\Theta_u\ge 0$, by \eqref{3.32} we obtain that
\begin{equation}\label{3.38}
 \Upsilon_u(t)\to-\infty \quad\text{as $t\to\infty$.}
\end{equation}
We now turn to consider the other three remaining cases in \eqref{3.29}, recalling that in all of them, by the assumption (A4), we have $\lim\limits_{u\to\pm \infty}\xi(u)>0$, so $C_g^\pm>0$.
Let us start with the case $\lim\limits_{u\to\infty} \sigma(u)>0=\lim\limits_{u\to-\infty }\sigma(u)$. By Lemma~\ref{Lemma 2} in this case we have $C_f^+>0$ and $\sigma\equiv 0$ in $(-\infty,0]$. Two further subcases may occur:
\renewcommand{\labelenumi}{{\alph{enumi})}}
\begin{enumerate}
\item when $u\in H^1_{\mathcal{S}}$, i.e. $u^+\not\equiv 0$ or $u_{|\Gamma_1}\not\equiv 0$, by \eqref{3.37} we get $\Sigma_u(t)\to \infty$ or $\Theta_u(t)\to \infty$ as $t\to\infty$. Since $\Sigma_u,\Theta_u\ge 0$, by \eqref{3.32} we still get \eqref{3.38};
\item when $u\in\mathcal{S}$, i.e. $u^+\equiv 0$ and $u_{|\Gamma_1}\equiv 0$, since $\sigma\equiv 0$ in $(-\infty,0]$, by \eqref{3.35} we have $\Sigma_u\equiv \Theta_u\equiv 0$, hence by \eqref{3.32} we obtain
    \begin{equation}\label{3.39}
      \Upsilon_u(t)=\Upsilon_u(0)>0\qquad\text{for all $t\ge 0$.}
    \end{equation}
\end{enumerate}
In the case in which $\lim\limits_{u\to-\infty} \sigma(u)>0=\lim\limits_{u\to\infty }\sigma(u)$, by using the same arguments, just considering $u^-$ instead of $u^+$, we get that when $u\in H^1_{\mathcal{S}}$ one obtains \eqref{3.38}, while when $u\in\mathcal{S}$ one gets \eqref{3.39}.
Finally, when $\lim\limits_{u\to\pm \infty}\sigma(u)=0$, we have $\sigma\equiv 0$, so when $u\in H^1_\mathcal{S}=H^1\setminus H^1_0(\Omega)$, we get \eqref{3.38}, while when $u\in H^1_0(\Omega)$ we get \eqref{3.39} again.

The analysis of all cases in \eqref{3.29} then shows that, when $u\in\mathcal{S}$, \eqref{3.39} holds true. Consequently, by \eqref{3.31}, we get $\theta_u'(t)=\Upsilon (0)t>0$ for $t>0$ and also $\theta_u(t)\to\infty$ as $t\to\infty$.
The same analysis also shows that, when $u\in H^1_\mathcal{S}$, \eqref{3.38} holds. Hence, $\Upsilon_u$ being decreasing, setting
$t_u=\sup\{t\ge 0: \Upsilon_u(t)=\Upsilon_u(0)\}$,
$$\tau_u=\sup\{t\ge 0: \Upsilon_u(t)>0)\},\quad \text{and}\quad T_u=\sup\{t\ge 0: \Upsilon_u(t)\ge 0)\},
$$
we have
$0\le t_u < \tau_u\le T_u<\infty$ and
\begin{equation}\label{3.40}
\begin{cases}
\Upsilon_u(t)=\Upsilon_u(0), \quad &\text{for $0\le t\le t_u$},\\
0<\Upsilon_u(t)<\Upsilon_u(0), \quad &\text{for $t_u<t<\tau_u$},\\
\Upsilon_u(t)=0, \quad &\text{for $\tau_u\le t\le T_u$},\\
\Upsilon_u(t)<0, \quad &\text{for $t>T_u$}.
\end{cases}
\end{equation}
We now claim that $\tau_u=T_u$. We suppose by contradiction that $\tau_u<T_u$. Then, by \eqref{3.40}, we have $\Upsilon'(t)=0$ for $t\in (\tau_u,T_u)$, which, by using \eqref{3.33} and the inequalities $\sigma',\xi'\ge 0$, yields that $\sigma'(tu)u^3=0$ a.e. in $\Omega$ and $\xi'(tu)u^3=0$ a.e. on $\Gamma_1$.
Now, by assumption (A4), when $\sigma'(tu)=0$ one necessarily has $\sigma(tu)=0$ and when
$\xi'(tu)=0$ one necessarily has $\xi(tu)=0$. Then we get that
$\sigma(tu)u^2=0$ a.e. in $\Omega$ and $\xi(tu)u^2=0$ a.e. on $\Gamma_1$. Hence, by \eqref{3.32} we have $\Upsilon_u(t)=\|u\|_{H^1}^2-A(u)=\Upsilon_u(0)$, contradicting \eqref{3.40} and proving our claim.

Hence, by \eqref{3.40} and \eqref{3.31}, we get that $\theta_u$ enjoys the monotonicity properties asserted in part ii), and by \eqref{3.31} and \eqref{3.38} we get that $\theta_u'(t)\to -\infty$ as $t\to\infty$, which trivially yields that $\theta_u(t)\to -\infty$ as well as $t\to\infty$, concluding the proof.
\end{proof}
We can now prove Theorems~\ref{Theorem 1} and \ref{Theorem 2}.
\begin{proof}[\bf Proof of Theorem~\ref{Theorem 1}] Let $c$ be the number given in \eqref{1.19bis}. By Proposition~\ref{Proposition 1} we just have to show that $c=d$, where $d$ is given by \eqref{1.19}.
At first we point out that, by Lemma~\ref{Lemma 7}, for all $u\in\mathcal{S}$ we have $\sup_{t>0}I(tu)=\infty$, so
\begin{equation}\label{3.40bis}
  d=\inf_{u\in H^1_\mathcal{S}}\sup_{t>0}I(tu).
\end{equation}
To recognize that $d=c$ we shall prove that $d\ge c$ and $d\le c$. To get the first inequality, let us take $u\in H^1_\mathcal{S}$. By Lemma~\ref{Lemma 7}, ii), there is $\tau_u>0$ such that $\max_{t>0}I(tu)=I(\tau_u u)$ and $I(tu)\to-\infty$ as $t\to \infty$. We now set $\sigma_u\in C([0,1];H^1)$ as follows: $\sigma_u(s)=D_u su$, for all $s\in [0,1]$,
where $D_u>0$ is so large that $I(D_u u)<0$. We then have $\sigma_u\in \Sigma$, so
$\sup_{t>0}I(tu)=I(\tau_u u)=\max_{t>0}I(tu)\ge \max_{s\in [0,1]}\sigma_u(s)\ge c$.
Being $u\in H^1_\mathcal{S}$ arbitrary, we get $d\ge c$.

To prove the reverse inequality we use Proposition~\ref{Proposition 1}. Hence, let $u\in H^1$ be a critical point of $I$ such that $I(u)=c$, so that $dI(u)=0$. Since, recalling the definition of $\theta_u$, we have
\begin{equation}\label{3.41}
  \theta_u'(t)=\langle dI(tu),u\rangle_{H^1} \quad\text{for all $t>0$,}
\end{equation}
we get $\theta_u'(1)=0$. Hence, by Lemma~\ref{Lemma 7}, we get that  $u\in H^1_\mathcal{S}$ and $\tau_u=1$, and consequently $c=I(u)=\max_{t>0}I(tu)\ge d$, concluding the proof.
\end{proof}
The proof of Theorem~\ref{Theorem 2} requires the following slight refinement of \cite[Proposition~6.7, Chapter 6,~p. 98]{ambrosettimalchiodi}.
\begin{lem}\label{Lemma 8}Let $E$ be a Hilbert space and $\mathcal{I}\in C^1(E)$ be such that the functional
$\mathcal{J}:E\to\R$, defined bt $\mathcal{J}(u)=\langle d\mathcal{I}(u),u\rangle_E$ is in $C^1(E)$ as well.
Set the Nehari manifold of $\mathcal{I}$ as
$\mathcal{M}=\{u\in E\setminus\{0\}: \mathcal{J}(u)=0\}$,
and suppose that $\mathcal{M}\not=\emptyset$, that
\begin{gather}\label{3.42}
\text{there is $r_0>0$ such that}\quad u\in\mathcal{M}\Longrightarrow \|u\|_E\ge r_0,\\
\label{3.43} \text{and that}\quad\langle d\mathcal{J}(u),u\rangle_E\not=0\quad\text{for all $u\in\mathcal{M}$.}
\end{gather}
Then $u\in\mathcal{M}$ is a critical point for $\mathcal{I}_{|\mathcal{M}}$ if and only if $d\mathcal{I}(u)=0$.
\end{lem}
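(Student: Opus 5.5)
The natural route is a Lagrange multiplier argument. First, observe that \eqref{3.42} and \eqref{3.43} make $\mathcal{M}$ a $C^1$ submanifold of codimension one. Indeed, $\mathcal{J}\in C^1(E)$, and for every $u\in\mathcal{M}$ we have $d\mathcal{J}(u)\neq 0$ in $E'$, because $\langle d\mathcal{J}(u),u\rangle_E\neq0$. Hence $0$ is a regular value of $\mathcal{J}$ restricted to the open set $E\setminus\{0\}$. Then $\mathcal{M}=\mathcal{J}^{-1}(0)\cap(E\setminus\{0\})$ is a $C^1$ Hilbert submanifold with tangent space $T_u\mathcal{M}=\ker d\mathcal{J}(u)$. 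The role of \eqref{3.42} is to guarantee that $\mathcal{M}$ stays away from $0$. As a result $\mathcal{M}$ is closed in $E$, so the notion of a critical point of $\mathcal{I}_{|\mathcal{M}}$ is unambiguous. This is where the refinement over \cite[Proposition~6.7]{ambrosettimalchiodi} lies: there $\mathcal{J}$ has a specific form, while here only the abstract hypotheses are used.

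The implication ``$d\mathcal{I}(u)=0\Rightarrow u$ critical for $\mathcal{I}_{|\mathcal{M}}$'' is immediate. The differential of the restriction is just $d\mathcal{I}(u)$ restricted to $T_u\mathcal{M}$, so it vanishes whenever $d\mathcal{I}(u)$ does.

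For the converse, let $u\in\mathcal{M}$ be critical for $\mathcal{I}_{|\mathcal{M}}$. Then $d\mathcal{I}(u)$ vanishes on $\ker d\mathcal{J}(u)$. Since $d\mathcal{J}(u)\neq0$, the Lagrange multiplier rule, which here is plain linear algebra with closed hyperplanes, gives some $\lambda\in\R$ with
$$d\mathcal{I}(u)=\lambda\, d\mathcal{J}(u)\quad\text{in }E'.$$
Testing this identity with $u$ itself yields $\mathcal{J}(u)=\langle d\mathcal{I}(u),u\rangle_E=\lambda\langle d\mathcal{J}(u),u\rangle_E$. The left-hand side is zero because $u\in\mathcal{M}$, and $\langle d\mathcal{J}(u),u\rangle_E\neq0$ by \eqref{3.43}. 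Therefore $\lambda=0$, and so $d\mathcal{I}(u)=0$.

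There is no serious obstacle here. The only point needing care is the first step: justifying the submanifold structure and the multiplier rule for a merely $C^1$ constraint, via the implicit function theorem in $E=\ker d\mathcal{J}(u)\oplus\R u$. This decomposition is legitimate precisely because $\langle d\mathcal{J}(u),u\rangle_E\neq0$. Alternatively, one can avoid manifold language altogether. One then defines criticality on $\mathcal{M}$ through $C^1$ curves in $\mathcal{M}$ and shows that every $v\in\ker d\mathcal{J}(u)$ is the velocity of such a curve. Such a curve can be built as $t\mapsto u+tv+s(t)u$, with $s(t)$ supplied by the implicit function theorem applied to $(t,s)\mapsto\mathcal{J}(u+tv+su)$. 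This works because $\partial_s$ of that map at $(0,0)$ equals $\langle d\mathcal{J}(u),u\rangle_E\neq0$.
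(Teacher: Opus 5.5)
Your argument is correct and is essentially the paper's: the paper just invokes the Lagrange multiplier proof of Ambrosetti--Malchiodi's Proposition~6.7 (write $d\mathcal{I}(u)=\lambda\,d\mathcal{J}(u)$, test with $u$, use \eqref{3.43} to get $\lambda=0$), noting that $C^1$ regularity of $\mathcal{I}$ and $\mathcal{J}$ suffices in place of $\mathcal{I}\in C^2$. One correction to your side remark: the refinement is not about the form of $\mathcal{J}$, since in the cited source it is already $\langle d\mathcal{I}(u),u\rangle$; it consists only in lowering the regularity and assuming \eqref{3.43} directly instead of a second-derivative condition, and \eqref{3.42} plays no role in the multiplier step.
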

\begin{proof}
In the statement of \cite[Proposition~6.7, Chapter 6,~p. 98]{ambrosettimalchiodi},
the functional $\mathcal{I}$ is assumed to belong to $C^2(E)$. Since this regularity is
used only to recognize that $\mathcal{J}\in C^1(E)$, the assumptions $\mathcal{I},\mathcal{J}\in C^1(E)$ are sufficient.
    Moreover, the assumption \cite[(6.4), p. 98]{ambrosettimalchiodi}, which reads as
$d^2\mathcal{J}(u)[u,u]\not=0$ for all $u\in\mathcal{M}$,
is used only to prove that \cite[(6.5), p. 98]{ambrosettimalchiodi} holds. This condition in our setting is nothing but \eqref{3.43}, which was directly assumed here.
\end{proof}
\begin{proof}[\bf Proof of Theorem~\ref{Theorem 2}]
We first claim that \eqref{1.22} holds. By Theorem~\ref{Theorem 1} and Lemma~\ref{Lemma 3}, $I$ has a nontrivial critical point $u$ such that $I(u)=d$. Since, by \eqref{1.18} and \eqref{1.20},
\begin{equation}\label{3.44}
  \mathcal{K}(v)=\langle dI(v),v\rangle_{H^1}\quad\text{for all $v\in H^1$,}
\end{equation}
and since $dI(u)=0$, we have $u\in\mathcal{N}$. Consequently, we get $d=I(u)\ge \inf_{v\in\mathcal{N}}I(v)$. To prove the reverse inequality, let $u\in\mathcal{N}$. By \eqref{1.30}--\eqref{1.31} and \eqref{3.41}, we have $\theta_u'(1)=0$, so by Lemma~\ref{Lemma 7} we have $u\in H^1_\mathcal{S}$ and $\tau_u=1$. Hence $I(u)=\max_{t>0}I(tu)\ge d$. Since $u\in\mathcal{N}$ is arbitrary, we get $\inf_{v\in\mathcal{N}}I(v)\ge d$, proving our claim.

Since, by Lemma~\ref{Lemma 3} and \eqref{3.44}, all nontrivial weak solutions of \eqref{1.1} belong to $\mathcal{N}$, those at leved $d$ are nontrivial lowest energy weak solutions of \eqref{1.1}, and so they satisfy \eqref{1.23}.
Conversely, let $u\in H^1$ satisfy \eqref{1.23}. Then, by \eqref{1.22},  $I(u)=\min_{v\in\mathcal{N}}I(v)$ and consequently $u$ is a critical point for $I_{|\mathcal{N}}$.

We now claim that we can apply Lemma~\ref{Lemma 8} to the functional $I$. Trivially we have $I,\mathcal{K}\in C^1(H^1)$, so we only have to check  that, when $\mathcal{I}=I$ and $\mathcal{J}=\mathcal{K}$, \eqref{3.42} and \eqref{3.43} hold true.
To check \eqref{3.42} we point out that, by \eqref{1.20}, \eqref{2.26}, \eqref{3.12} and \eqref{3.14}, for all $u\in\mathcal{N}$ we have
\begin{align*}
\|u\|_{H^1}^2&=\int_\Omega f(u)u+\int_{\Gamma_1}g(u)u\le a_0\|u\|_{H^0}^2+c_8\left(\|u\|_p^p+\|u\|_{q,\Gamma_1}^q\right)\\
&\le \frac{a_0}{\lambda_1}\|u\|_{H^1}^2+c_8\left(c_{13}^p\|u\|_{H^1}^p+c_{13}^q\|u\|_{H^1}^q\right).
\end{align*}
Consequently, since $u\not\equiv 0$, we have
$c_{13}^p\|u\|_{H^1}^{p-2}+c_{13}^q\|u\|_{H^1}^{q-2}\ge \frac{\lambda_1-a_0}{c_8\lambda_1}>0$,
which immediately yields that $\inf_{u\in \mathcal{N}}\|u\|_{H^1}>0$, so proving \eqref{3.42}.

To prove \eqref{3.43} we point out that, by \eqref{1.20} and \eqref{3.23} we have
\begin{equation}\label{3.45}
 \mathcal{K}(u)=\|u\|_{H^1}^2-A(u)-\int_\Omega h(u)u-\int_{\Gamma_1}k(u)u\quad\text{for all $u\in H^1$},
\end{equation}
so, for all $u,v\in H^1$, we get
\begin{align*}
\langle d\mathcal{K}(u),v\rangle_{H^1}&=2\left[(u,v)_{H^1}-\int_\Omega (\ell^+u^++\ell^-u)v
-\int_{\Gamma_1}(\mathfrak{m}^+u^+ +\mathfrak{m}^-u^-)v\right]\\
&-\int_\Omega h'(u)uv-\int_\Omega h(u)v-\int_{\Gamma_1} k'(u)uv-\int_{\Gamma_1} k(u)v.
\end{align*}
Consequently, using \eqref{3.23} again, for all $u\in H^1$ we have
\begin{equation}\label{3.47}
\begin{aligned}
\langle d\mathcal{K}(u),u\rangle_{H^1}&=2\left[\|u\|_{H^1}^2-A(u)\right]-\int_\Omega h'(u)u^2-\int_{\Gamma_1} k'(u)u^2\\
&-\int_\Omega h(u)u-\int_{\Gamma_1} k(u)u.
\end{aligned}
\end{equation}
By \eqref{3.45} and \eqref{3.47} we get
$$\langle d\mathcal{K}(u),u\rangle_{H^1}=2\mathcal{K}(u)+\int_\Omega h(u)u+\int_{\Gamma_1} k(u)u
-\int_\Omega h'(u)u^2-\int_{\Gamma_1} k'(u)u^2.$$
Consequently, for all $u\in\mathcal{N}$, using \eqref{2.21bis} and the fact that $\mathcal{K}(u)=0$, we get
\begin{equation}\label{3.48}
\begin{aligned}
\langle d\mathcal{K}(u),u\rangle_{H^1}=&\int_\Omega h(u)u+\int_{\Gamma_1} k(u)u
-\int_\Omega h'(u)u^2-\int_{\Gamma_1} k'(u)u^2\\
=&-\int_\Omega \sigma'(u)u^3-\int_{\Gamma_1} \xi'(u)u^3.
\end{aligned}
\end{equation}
Then, supposing by contradiction that $\langle d\mathcal{K}(u),u\rangle_{H^1}=0$, since $\sigma',\xi'\ge 0$, by \eqref{3.48} we get that $\sigma'(u)u^3=0$ a.e. in $\Omega$ and  $\xi'(u)u^3=0$ a.e. on
 $\Gamma_1$. Then, using the assumption (A4), we conclude that $\sigma(u)u^2=0$ a.e. in $\Omega$ and  $\xi(u)u^2=0$ a.e. on  $\Gamma_1$. Hence, as $\mathcal{K}(u)=0$,   \eqref{3.45} yields that
 $\|u\|_{H^1}^2-A(u)=0$. But, by \eqref{3.25}, we have
 $$\|u\|_{H^1}^2-A(u)\ge \|u\|_{H^1}^2-a_M\|u\|_{H^0}^2\ge \frac {\lambda_1-a_M}{\lambda_1}\|u\|_{H^1}^2,$$
 hence (as $a_M<\lambda_1$) we get $u\equiv 0$, this one being the required contradiction.
\end{proof}
\section{Multiplicity} \label{section 4}
\subsection{The main dichotomy} \label{section 4.1}
This section is devote to prove Theorem~\ref{Theorem 3}, so we shall henceforth consider nonlinearities $f$ and $g$ which, beside satisfying assumptions (A1--4), are \emph{odd}. As a consequence, the functions $u\mapsto f(u)/u$ and $u\mapsto g(u)/u$ in assumptions (A2) are even, and we then have
\begin{equation}\label{4.1}
  \ell:=\ell^+=\ell^-<\lambda_1, \qquad\text{and}\quad \mathfrak{m}:=\mathfrak{m}^+=\mathfrak{m}^-<\lambda_1.
\end{equation}
By \eqref{4.1}, since $f$ and $g$ are odd, the functions $h$ and $k$ in \eqref{1.7} are odd as well, and the functions $\sigma$ and $\xi$ is assumption (A4) are even.
Then, recalling the discussion made in Remark~\ref{Remark0} on the possible behaviors of $\sigma$ and $\xi$, we get that the following main alternative holds true:
\renewcommand{\labelenumi}{{\Alph{enumi})}}
\begin{equation}\label{alternative}
\text{A) either $\lim\limits_{|u|\to\infty}\sigma(u)>0$, \quad or  \qquad B) $\sigma\equiv 0$ and $\lim\limits_{|u|\to\infty}\xi(u)>0$.}
\end{equation}
As we are going to see, the proof of Theorem~\ref{Theorem 3} in the case A) is straightforard, since bi Lemma~\ref{Lemma 5} we can directly apply Theorem~\ref{Theorem 9} to the functional I. Unfortunately, in the case B), the functional $I$ does not satisfy the assumption (v) of the just quoted result. This fact can be easily recognized by considering the restriction of $I$ on the space $H^1_0(\Omega)$. Indeed, for any $u$ belonging to this space one has $I(u)=\frac 12\|\nabla u\|_2^2-\frac \ell 2 \|u\|_2^2$ and, by \eqref{3.12}, one has $I(u)>0$ whenever $u\in H^1_0(\Omega)\setminus\{0\}$.
\subsection{The case B)} \label{section 4.2}In the sequel we are then going to consider the case B) in \eqref{alternative}, in which $f(u)=\ell u$ for all $u\in\R$ and problem \eqref{1.1} simplifies to the following one:
\begin{equation}\label{4.2}
\begin{cases} -\Delta u-\ell u=0 \qquad &\text{in
$\Omega$,}\\
\phantom{-}u=0 &\text{on $\Gamma_0$,}\\
-\Delta_\Gamma u +\partial_\nu u =g(u)\qquad
&\text{on
$\Gamma_1$.}
\end{cases}
\end{equation}
For the sake of clearness, we then repeat the assumptions made above in the case B) as they apply to problem \eqref{4.2}:
\begin{itemize}
\item[(A6)] the function $g$ is odd, it  satisfies all requirements in assumptions (A1--4), and we
have $\lim\limits_{|u|\to\infty}\xi(u)>0$ and $\ell<\lambda_1$.
\end{itemize}
By the way weak solutions of \eqref{4.2} are those obtained by particularizing Definition~\ref{Definition 2}.

In order to introduce a suitable variational setting to deal with problem \eqref{4.2}, we set up the following auxiliary nonhomogeneous Dirichlet problem
\begin{equation}\label{4.3}
\begin{cases}
-\Delta u-\lambda u=\zeta&\quad\text{in $\Omega$,}\\
\phantom{-}u=v &\quad\text{on $\Gamma$,}
\end{cases}
\end{equation}
and we make precise what we mean by a weak solution of it.
\begin{definition}\label{Definition 3}
Let $\zeta\in H^{-1}(\Omega)$ and $v\in H^{1/2}(\Gamma)$.
By a \emph{weak solution of problem \eqref{4.3}} we mean $u\in H^1(\Omega)$ such that $u_{|\Gamma}=v$ and $-\Delta u-\lambda u=\zeta$ in $\mathcal{D}'(\Omega)$, or equivalently
\begin{equation}\label{4.4}
  \int_\Omega \nabla u\nabla \phi-\lambda \int_\Omega u\phi=\langle \zeta,\psi\rangle_{H^1_0(\Omega)}\qquad\text{for all $\phi\in H^1_0(\Omega)$.}
\end{equation}
\end{definition}
A more or less standard application of the elliptic theory allows to prove the following well--posedness result for problem \eqref{4.3}, where $\lambda_1^D$ stands for the first eigenvalue of the  Laplacian with homogeneous Dirichlet boundary conditions (see \cite[Chapter~6, p. 356]{evans}).
\begin{lem}\label{Lemma 9}
When $\lambda<\lambda_1^D$, for any $\zeta\in H^{-1}(\Omega)$ and $v\in H^{1/2}(\Gamma)$ problem \eqref{4.3} has a unique weak solution $u\in H^1(\Omega)$. Moreover, there is a positive constnt $c_{17}=c_{17}(\lambda,\Omega)$ such that
\begin{equation}\label{4.5}
  \|u\|_{H^1(\Omega)}\le c_{17}\left(\|\zeta\|_{H^{-1}(\Omega)}+\|v\|_{H^{1/2}(\Gamma)}\right)
\end{equation}
for all $\zeta\in H^{-1}(\Omega)$ and $v\in H^{1/2}(\Gamma)$.
\end{lem}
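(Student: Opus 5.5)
The natural route is to reduce to homogeneous boundary data by lifting the Dirichlet datum, and then solve the resulting problem in $H^1_0(\Omega)$ with the Lax--Milgram theorem.

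\emph{Step 1: lifting.} Since $\Tr:H^1(\Omega)\to H^{1/2}(\Gamma)$ is onto and has a bounded right inverse $\mathcal{R}\in\mathcal{L}(H^{1/2}(\Gamma);H^1(\Omega))$ (the standard extension result for $C^1$ domains, see \cite{grisvard}), set $w=\mathcal{R}v$. Then $w_{|\Gamma}=v$ and $\|w\|_{H^1(\Omega)}\le C\|v\|_{H^{1/2}(\Gamma)}$. We look for $u=w+z$ with $z\in H^1_0(\Omega)$. By \eqref{4.4}, $z$ must satisfy
$$a_\lambda(z,\phi):=\int_\Omega\nabla z\nabla\phi-\lambda\int_\Omega z\phi=\langle\zeta,\phi\rangle_{H^1_0(\Omega)}-\int_\Omega\nabla w\nabla\phi+\lambda\int_\Omega w\phi$$
for all $\phi\in H^1_0(\Omega)$. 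The right-hand side is a functional $\zeta_w\in H^{-1}(\Omega)$ with $\|\zeta_w\|_{H^{-1}}\le\|\zeta\|_{H^{-1}}+C(1+|\lambda|)\|w\|_{H^1(\Omega)}$.

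\emph{Step 2: coercivity.} The form $a_\lambda$ is bounded on $H^1_0(\Omega)$. To see that it is coercive, use the variational characterization $\|\nabla\phi\|_2^2\ge\lambda_1^D\|\phi\|_2^2$ on $H^1_0(\Omega)$.
\begin{itemize}
\item If $\lambda\le 0$, then $a_\lambda(\phi,\phi)\ge\|\nabla\phi\|_2^2$.
\item If $0<\lambda<\lambda_1^D$, then $a_\lambda(\phi,\phi)\ge(1-\lambda/\lambda_1^D)\|\nabla\phi\|_2^2$.
\end{itemize}
Since $\|\nabla\cdot\|_2$ is an equivalent norm on $H^1_0(\Omega)$ by Poincaré's inequality, $a_\lambda$ is coercive in both cases. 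Lax--Milgram then gives a unique $z\in H^1_0(\Omega)$ solving the equation, with $\|z\|_{H^1}\le C\|\zeta_w\|_{H^{-1}}$. Hence $u=w+z$ is a weak solution, and combining the bounds gives \eqref{4.5} with $c_{17}$ depending only on $\lambda$ and $\Omega$.

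\emph{Step 3: uniqueness.} If $u_1,u_2$ are two weak solutions, their difference lies in $H^1_0(\Omega)$, since its trace vanishes and $\Omega$ is $C^1$. It satisfies $a_\lambda(u_1-u_2,\phi)=0$ for all $\phi\in H^1_0(\Omega)$. Testing with $\phi=u_1-u_2$ and using coercivity gives $u_1=u_2$. This also shows that the solution does not depend on the choice of the lifting $w$.

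There is no real obstacle in this argument. The only points needing care are the existence of a bounded right inverse of the trace for merely $C^1$ boundaries, which is classical, and the fact that $\ker\Tr=H^1_0(\Omega)$ for such domains, which is also classical. The coercivity step is where the hypothesis $\lambda<\lambda_1^D$ enters.
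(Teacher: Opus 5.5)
Your proof is correct and follows the same route as the paper. Both lift the boundary datum with Grisvard's bounded right inverse of the trace, then apply Lax--Milgram in $H^1_0(\Omega)$, with coercivity coming from the Rayleigh characterization of $\lambda_1^D$. Your separate treatment of $\lambda\le 0$ is in fact more careful than the paper, whose single coercivity constant $(\lambda_1^D-\lambda)/\lambda_1^D$ is valid only for $\lambda\ge 0$; you also correctly write $u=w+z$, where the paper has a sign slip ($u:=\mathfrak{u}-\mathcal{R}v$).
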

\begin{proof}We give a proof, only for the reader's convenience, by essentially repeating the proof of
\cite[Chapter~1, \S 1.3, Proposition 1.1, p. 12]{GiraultRaviart}, where the case $\lambda=0$ was considered.
We recall, see \cite[Chapter~1, Theorem~1.5.1.3, p. 38]{grisvard}, that the Trace Operator $\Tr\in\mathcal{L}(H^1(\Omega);H^{1/2}(\Gamma))$ has a right--inverse $\mathcal{R}\in\mathcal{L}(H^{1/2}(\Gamma);H^1(\Omega))$, i.e. one has $\mathcal{R}v_{|\Gamma}=v$ for all $v\in H^{1/2}(\Gamma)$. Using it, one easily see that, given any $v\in H^{1/2}(\Gamma)$, $u\in H^1(\Omega)$ is a weak solution of \eqref{4.3} if and only if $\mathfrak{u}:=u-\mathcal{R}v$ is a weak solution of the following particular case of problem \eqref{4.3}, that is of
\begin{equation}\label{4.6}
\begin{cases}
-\Delta \mathfrak{u}-\lambda \mathfrak{u}=\zeta'&\quad\text{in $\Omega$,}\\
\phantom{-}\mathfrak{u}=0 &\quad\text{on $\Gamma$,}
\end{cases}
\end{equation}
where $\zeta'\in H^{-1}(\Omega)$ is defined  by
\begin{equation}\label{4.7}
\langle \zeta',\varphi\rangle_{H^1_0(\Omega)}=\langle \zeta,\varphi\rangle_{H^1_0(\Omega)}
-\int_\Omega \nabla(\mathcal{R}v)\nabla\varphi+\lambda \int_\Omega \mathcal{R}v\varphi\quad\text{for all $\varphi\in H^1_0(\Omega)$.}
\end{equation}
The bilinear form $a:H^1_0(\Omega)\times H^1_0(\Omega)\to\R$ associated to the weak form of problem \eqref{4.6} is given by
\begin{equation}\label{4.8}
  a[\varphi,\psi]=\int_\Omega \nabla\varphi \nabla \psi -\lambda \int_\Omega \varphi\psi\quad \text{for all $\varphi,\psi\in H^1_0(\Omega)$,}
\end{equation}
and it is trivially continuous. Moreover, since (see \cite[Chapter~6, Theorem~2, p.~356]{evans}) one has
\begin{equation}\label{4.9}
  \lambda_1^D=\min_{u\in H^1_0(\Omega)\setminus\{0\}}\frac {\|\nabla u\|_2^2}{\|u\|_2^2},
\end{equation}
one also has
\begin{equation}\label{4.10}
\|\nabla u\|_2^2\ge \lambda_1^D\|u\|_2^2\quad \text{for all $u\in H^1_0(\Omega)$,}
\end{equation}
and, as it is well--known, $\|\nabla(\cdot)\|_2$  is an equivalent norm in $H^1_0(\Omega)$. Since, by \eqref{4.8} and \eqref{4.10}, we have
$$a[\varphi,\varphi]=\|\nabla\varphi\|_2^2-\lambda \|\varphi\|_2^2\ge \frac{\lambda_D^1-\lambda}{\lambda_1^D}\|\nabla\varphi\|_2\quad\text{for all $\varphi\in H^1_0(\Omega)$,}$$
when $\lambda<\lambda_1^D$ the form $a$ is also coercive.

We can then apply the Lax--Milgram Theorem, see in particular \cite[Chapter~1, \S 1.2,  p. 10]{GiraultRaviart}, to recognize that problem \eqref{4.6} has a unique weak solution $\mathfrak{u}\in H^1_0(\Omega)$ and there is a positive constant $c_{18}=c_{18}(\lambda,\Omega)$ such that
\begin{equation}\label{4.11}
\|\nabla\mathfrak{u}\|_2\le c_{18}\|\zeta'\|_{H^{-1}(\Omega)}\quad\text{for all $\zeta'\in H^{-1}(\Omega)$.}
\end{equation}
Hence, $u:=\mathfrak{u}-\mathcal{R}v$ is the unique weak solution of \eqref{4.3}.using \eqref{4.7} in combination with the boundedness of $\mathcal{R}$, the estimate \eqref{4.5} follows, concluding the proof.
\end{proof}
We are now going to apply Lemma~\ref{Lemma 9} to the auxiliary nonhomogeneous Dirichlet problem
\begin{equation}\label{4.12}
\begin{cases}
-\Delta u-\ell u=0&\quad\text{in $\Omega$,}\\
\phantom{-}u=v &\quad\text{on $\Gamma$,}
\end{cases}
\end{equation}
which is a particular case of problem \eqref{4.3}.
\begin{lem}\label{Lemma 10} When $\ell<\lambda_1$, for any $v\in H^1_{\Gamma_0}(\Gamma)$, problem \eqref{4.12} has a unique weak solution $u\in H^1$. The operator $v\mapsto u$ is linear and bounded from $H^1_{\Gamma_0}(\Gamma)$ into $H^1$, having as range the closed subspace $\mathbb{E}^1$ of $H^1$ given by
\begin{equation}\label{4.13}
 \mathbb{E}^1=\left\{ u\in H^1: \int_\Omega \nabla u\nabla\varphi-\ell\int_\Omega u\varphi=0\quad\text{for all $\varphi\in H^1_0(\Omega)$}\right\}.
\end{equation}
Hence, denoting $u:=\mathbb{D}v$, we get the bijective isomorphism
\begin{equation}\label{4.14}
\mathbb{D}\in\mathcal{L}(H^1_{\Gamma_0}(\Gamma);\mathbb{E}^1),\quad\text{with $\mathbb{D}^{-1}=\Tr_{|\mathbb{E}^1}\in\mathcal{L}(\mathbb{E}^1;H^1_{\Gamma_0}(\Gamma))$.}
\end{equation}
\end{lem}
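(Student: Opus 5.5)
The plan is to reduce Lemma~\ref{Lemma 10} to Lemma~\ref{Lemma 9} with $\lambda=\ell$ and $\zeta=0$. The key inequality needed is $\ell<\lambda_1^D$, and it follows from $\ell<\lambda_1$. Indeed, $H^1_0(\Omega)$ sits inside $H^1$ as the subspace of functions with zero trace, and on it $\|u\|_{H^1}^2=\|\nabla u\|_2^2$ and $\|u\|_{H^0}^2=\|u\|_2^2$. So the Rayleigh characterizations \eqref{3.11} and \eqref{4.9} give $\lambda_1\le\lambda_1^D$.

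\textbf{Existence and uniqueness.} Given $v\in H^1_{\Gamma_0}(\Gamma)\subset H^1(\Gamma)\hookrightarrow H^{1/2}(\Gamma)$, Lemma~\ref{Lemma 9} yields a unique weak solution $u\in H^1(\Omega)$ of \eqref{4.12}. Its trace $u_{|\Gamma}=v$ lies in $H^1(\Gamma)$ and vanishes a.e.\ on $\Gamma_0$. Hence $u\in H^1$ under the identification \eqref{1.17}. Uniqueness in $H^1$ is inherited from uniqueness in $H^1(\Omega)$.

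\textbf{Linearity and boundedness.} Linearity of $\mathbb{D}$ follows from uniqueness. For boundedness we use \eqref{4.5} with $\zeta=0$ together with the continuous embedding $H^1(\Gamma)\hookrightarrow H^{1/2}(\Gamma)$, which gives $\|u\|_{H^1(\Omega)}\le c\|v\|_{H^1(\Gamma)}$. By the norm $\vertiii\cdot_{H^1}$ in \eqref{2.11}, \eqref{2.4} and Lemma~\ref{Lemma 1},
\[
\|u\|_{H^1}^2\le C\bigl(\|\nabla u\|_2^2+\|v\|_{H^1(\Gamma)}^2\bigr)\le C'\|v\|_{H^1(\Gamma)}^2 .
\]

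\textbf{Range.} Every $\mathbb{D}v$ satisfies \eqref{4.4} with $\zeta=0$, so it lies in $\mathbb{E}^1$. Conversely, if $u\in\mathbb{E}^1$, set $v=u_{|\Gamma}\in H^1_{\Gamma_0}(\Gamma)$. Then $u$ is a weak solution of \eqref{4.12} with this $v$, and by uniqueness $u=\mathbb{D}v$. This shows that the range is $\mathbb{E}^1$ and that $\Tr\circ\mathbb{D}=\mathrm{id}$ and $\mathbb{D}\circ\Tr_{|\mathbb{E}^1}=\mathrm{id}$.

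\textbf{Closedness and the isomorphism.} $\mathbb{E}^1$ is closed in $H^1$ because it is an intersection of kernels of the continuous linear functionals $u\mapsto\int_\Omega\nabla u\nabla\varphi-\ell\int_\Omega u\varphi$. The trace map is bounded from $H^1$ into $H^1_{\Gamma_0}(\Gamma)$, which is immediate from the definition \eqref{1.16}. So $\mathbb{D}^{-1}=\Tr_{|\mathbb{E}^1}$ is bounded and \eqref{4.14} follows.

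I expect no real obstacle here. The only delicate point is the comparison $\lambda_1\le\lambda_1^D$, which is what justifies invoking Lemma~\ref{Lemma 9} with $\lambda=\ell$.
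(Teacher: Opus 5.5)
Your proposal is correct and follows the paper's proof. You derive $\ell<\lambda_1\le\lambda_1^D$ from the Rayleigh characterizations \eqref{3.11} and \eqref{4.9}, apply Lemma~\ref{Lemma 9} with $\lambda=\ell$ and $\zeta=0$, and conclude using the boundedness of the trace from $H^1$ into $H^1_{\Gamma_0}(\Gamma)$. You also spell out two points the paper treats as immediate: the reverse inclusion of $\mathbb{E}^1$ into the range of $\mathbb{D}$, obtained via uniqueness, and the closedness of $\mathbb{E}^1$.
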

\begin{proof}
We first notice that, by using \eqref{2.12}, for all $u\in H^1_0(\Omega)\setminus\{0\}$ one has $\|u\|_{H^1}^2/\|u\|_{H^0}^2=\|\nabla u\|_2^2/\|u\|_2^2$. Hence, by \eqref{3.11} and \eqref{4.9} we have
$$\lambda_1=\min_{u\in H^1\setminus\{0\}}\frac {\|u\|_{H^1}^2}{\|u\|_{H^0}^2}\le \min_{u\in H^1_0(\Omega)\setminus\{0\}}\frac {\|\nabla u\|_2^2}{\|u\|_2^2}=\lambda_1^D,$$
so $\ell<\lambda_1^D$ and we can apply Lemma~\ref{Lemma 9} when $\lambda=\ell$ and $\zeta=0$. Moreover, since $v\in H^1_{\Gamma_0}(\Gamma)$, the weak solution $u$ belongs to $H^1$ and, trivially, also to $\mathbb{E}^1$, which is closed subspace of $H^1$. Moreover, there is a positive constant
 $c_{19}=c_{19}(\Omega,\Gamma_0,\Gamma_1)$ such that
 $\|u\|_{H^1}\le c_{19}\|v\|_{H^1(\Gamma)}$ for all $v\in H^1_{\Gamma_0}(\Gamma)$.
 Since the trace operator trivially restricts to $\Tr\in\mathcal{L}(H^1;H^1_{\Gamma_0}(\Gamma))$, the proof is complete.
\end{proof}
The space $\mathbb{E}^1$ introduced in Lemma~\ref{Lemma 10} constitutes a natural constraint for problem \eqref{4.2}, since equations \eqref{4.2}$_1$ and \eqref{4.2}$_2$ automatically hold in it. To write the remaining equation \eqref{4.2}$_3$ in a convenient dual space (equivalently, in a weak form), we recall the realization $-\Delta_{\Gamma_1}^D$ of the Laplace-Beltrami operator introduced in \eqref{2.10}. We also introduce the Dirichlet--to--Neumann operator, associated to the problem \eqref{4.2}$_1$--\eqref{4.2}$_2$. It is the operator $\mathbb{A}\in\mathcal{L}(H^1_{\Gamma_0}(\Gamma);[H^1_{\Gamma_0}(\Gamma)]' )$ defined by
\begin{equation}\label{4.15}
\langle \mathbb{A}v,w\rangle_{H^1_{\Gamma_0}(\Gamma)}=\int_\Omega \nabla(\mathbb{D}v)\nabla(\mathbb{D}w)
-\ell \int_\Omega \mathbb{D}v\,\mathbb{D}w
\quad\text{for all $v,w\in H^1_{\Gamma_0}(\Gamma)$.}
\end{equation}
We notice that, being $\mathbb{D}v\in\mathbb{E}^1$, for all $v,w\in H^1_{\Gamma_0}(\Gamma)$ and  $\phi\in H^1$ such that $\phi_{|\Gamma}=w$, we have $\phi-\mathbb{D}w\in H^1_0(\Omega)$ and then, according to \eqref{4.13}, we have
$$\int_\Omega \nabla(\mathbb{D}v)\nabla(\phi-\mathbb{D}w)-\ell\int_\Omega \mathbb{D}v(\phi-\mathbb{D}w)=0,$$
and consequently, by \eqref{4.15},
\begin{equation}\label{4.16}
\langle \mathbb{A}v,w\rangle_{H^1_{\Gamma_0}(\Gamma)}=\int_\Omega \nabla(\mathbb{D}v)\nabla\phi-\ell\int_\Omega \mathbb{D}v\,\phi.
\end{equation}
The equation \eqref{4.2}$_3$ then assumes the abstract form
\begin{equation}\label{4.17}
 -\Delta_{\Gamma_1}^D v+\mathbb{A}v=g(v)\qquad\text{in $[H^1_{\Gamma_0}(\Gamma)]'$}.
\end{equation}
From what precedes \eqref{4.17} can be equivalently written as
\begin{equation}\label{4.18}
\int_\Omega \nabla(\mathbb{D}v)\nabla(\mathbb{D}\psi)-\ell\int_\Omega \mathbb{D}v\,\mathbb{D}\psi+\int_{\Gamma_1}(\nabla_\Gamma v,\nabla_\Gamma \psi)_\Gamma-\int_{\Gamma_1}g(v)\psi=0
\end{equation}
for all $\psi\in H^1_{\Gamma_0}(\Gamma)$.
The achievements of the preceding discussion are formalized as follows.
\begin{lem}\label{Lemma 11}Let assumption (A6) hods, and $u\in H^1$. Then $u$ is a weak solution of \eqref{4.2} if and only if $u\in \mathbb{E}^1$ and $v=u_{|\Gamma}$ solves equation \eqref{4.17}, or \eqref{4.18}. Moreover, in this case, $u=\mathbb{D}v$.
\end{lem}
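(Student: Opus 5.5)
The plan is to prove the two implications separately, using Definition~\ref{Definition 2} specialized to \eqref{4.2}, that is
\begin{equation*}
\int_\Omega \nabla u\nabla\phi-\ell\int_\Omega u\phi+\int_{\Gamma_1}(\nabla_\Gamma u,\nabla_\Gamma\phi)_\Gamma-\int_{\Gamma_1}g(u)\phi=0\qquad\text{for all $\phi\in H^1$,}
\end{equation*}
which is \eqref{3.3} with $f(u)=\ell u$, since in case B) we have $\sigma\equiv 0$.

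\emph{Necessity.} Suppose $u$ is a weak solution of \eqref{4.2}. First I take test functions $\phi\in H^1_0(\Omega)$. Since $H^1_0(\Omega)\subset H^1$ and $\phi_{|\Gamma}=0$, the boundary terms vanish, and what remains is exactly the defining condition \eqref{4.13}; hence $u\in\mathbb{E}^1$. Set $v=u_{|\Gamma}$. Then $v\in H^1_{\Gamma_0}(\Gamma)$ by \eqref{1.16}--\eqref{2.3}. By Lemma~\ref{Lemma 10}, $\mathbb{D}v$ is the unique element of $\mathbb{E}^1$ with trace $v$, because $\Tr_{|\mathbb{E}^1}$ is injective with inverse $\mathbb{D}$ by \eqref{4.14}. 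Hence $u=\mathbb{D}v$.

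Next, for an arbitrary $\psi\in H^1_{\Gamma_0}(\Gamma)$, I use the test function $\phi=\mathbb{D}\psi\in H^1$, whose trace is $\psi$. Substituting $u=\mathbb{D}v$ and $\phi=\mathbb{D}\psi$ into the weak formulation gives exactly \eqref{4.18}. The equivalence of \eqref{4.18} with \eqref{4.17} follows from \eqref{4.15} and \eqref{2.10}. Here I use that $\int_{\Gamma_1}g(v)\psi$ defines an element of $[H^1_{\Gamma_0}(\Gamma)]'$, which holds by the growth bound \eqref{2.18} and the Sobolev embedding on $\Gamma$.

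\emph{Sufficiency.} Let $u\in\mathbb{E}^1$ and let $v=u_{|\Gamma}$ satisfy \eqref{4.18}; as above, $u=\mathbb{D}v$. Take an arbitrary $\phi\in H^1$ and set $w=\phi_{|\Gamma}\in H^1_{\Gamma_0}(\Gamma)$. Identity \eqref{4.16} gives
\begin{equation*}
\langle\mathbb{A}v,w\rangle_{H^1_{\Gamma_0}(\Gamma)}=\int_\Omega\nabla u\nabla\phi-\ell\int_\Omega u\phi .
\end{equation*}
This identity follows from $\phi-\mathbb{D}w\in H^1_0(\Omega)$ together with \eqref{4.13}. The boundary terms in the weak formulation depend only on the trace $w$. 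Therefore \eqref{4.18} with $\psi=w$ is exactly \eqref{3.3} for the test function $\phi$, so $u$ is a weak solution of \eqref{4.2}.

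There is no real obstacle in this argument. The only point needing care is the identification $u=\mathbb{D}v$, which rests on the uniqueness in Lemma~\ref{Lemma 10} and in turn on $\ell<\lambda_1\le\lambda_1^D$. The rest is bookkeeping that passes between test functions in $H^1$ and their traces via $\mathbb{D}$, as done in \eqref{4.16}.
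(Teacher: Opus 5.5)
Your proof is correct and follows the same route as the paper. Testing the weak formulation with $\phi\in H^1_0(\Omega)$ gives $u\in\mathbb{E}^1$, and for the converse \eqref{4.15}--\eqref{4.16} turn \eqref{4.18} back into \eqref{3.3} for an arbitrary $\phi\in H^1$. You also spell out two steps the paper leaves implicit: the identification $u=\mathbb{D}v$ via injectivity of $\Tr_{|\mathbb{E}^1}$ (Lemma~\ref{Lemma 10}), and the choice of test functions $\phi=\mathbb{D}\psi$ that yields \eqref{4.18} in the necessity part.
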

\begin{proof} If $u\in H^1$ is a weak solution of \eqref{4.2}, by taking in \eqref{3.3} test functions $\phi\in H^1_0(\Omega)$, and recalling that $f(u)=\ell u$, we immediately get that $u\in\mathbb{E}^1$ and
that $v=u_{|\Gamma}$ satisfies \eqref{4.18}, or equivalently \eqref{4.17}. Conversely, if $v\in H^1_{\Gamma_0}(\Gamma)$ satisfies \eqref{4.18}, by setting $u=\mathbb{D}v\in\mathbb{E}^1$ we trivially have
$v=u_{|\Gamma}$. Moreover, by \eqref{4.18} and \eqref{4.15} we get
$$\langle\mathbb{A}v,\psi\rangle_{H^1_{\Gamma_0}(\Gamma)}+\int_{\Gamma_1}(\nabla_\Gamma v,\nabla_\Gamma \psi)_\Gamma -\int_{\Gamma_1}g(v)\psi=0\quad\text{for all $\psi\in H^1_{\Gamma_0}(\Gamma)$,}$$
which, by using \eqref{4.16}, implies that \eqref{3.2} holds, so $u$ is a weak solution of \eqref{4.2}.
\end{proof}
Equation \eqref{4.18} has a clear variational structure. To make it explicit  we introduce the functional $\Phi:H^1_{\Gamma_0}(\Gamma)\to\R$ given by
\begin{equation}\label{4.19}
 \Phi(v)=\frac 12 \int_\Omega |\nabla(\mathbb{D}v)|^2-\frac \ell 2\int_\Omega |\mathbb{D}v|^2+\frac 12 \int_{\Gamma_1} |\nabla_\Gamma v|_\Gamma^2-\int_{\Gamma_1}G(v).
\end{equation}
The following result points out some trivial properties of $\Phi$ and its connection with the functional $I$ in \eqref{1.18}.
\begin{lem}\label{Lemma 12}  Let assumption (A6) hold. We have $\Phi\in C^1(H^1_{\Gamma_0}(\Gamma))$, its Fr\'{e}chet derivative being given
 by
\begin{equation}\label{4.20}
 \langle d\Phi(v),\psi\rangle_{H^1_{\Gamma_0}(\Gamma)}=\int_\Omega \nabla(\mathbb{D}v)\nabla(\mathbb{D}\psi)-\ell\int_\Omega\mathbb{D}v\,\mathbb{D}\psi +\int_{\Gamma_1}(\nabla_\Gamma v,\nabla_\Gamma \psi)_\Gamma-\int_{\Gamma_1}g(v)\psi
\end{equation}
for all $v,\psi\in H^1_{\Gamma_0}(\Gamma)$.
Consequently, critical points of $\Phi$ coincide with solutions of \eqref{4.17}. Moreover,
$\Phi(0)=0$, $\Phi$ is even and we have $\Phi=I\cdot \mathbb{D}$.
\end{lem}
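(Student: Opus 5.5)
The plan is to write $\Phi$ as a quadratic form minus a potential term and then use the results already established for $I$ and $\mathbb{D}$. Define the bilinear form $b:H^1_{\Gamma_0}(\Gamma)\times H^1_{\Gamma_0}(\Gamma)\to\R$ by
$$b(v,w)=\int_\Omega \nabla(\mathbb{D}v)\nabla(\mathbb{D}w)-\ell\int_\Omega \mathbb{D}v\,\mathbb{D}w+\int_{\Gamma_1}(\nabla_\Gamma v,\nabla_\Gamma w)_\Gamma .$$
By Lemma~\ref{Lemma 10}, $\mathbb{D}\in\mathcal{L}(H^1_{\Gamma_0}(\Gamma);\mathbb{E}^1)\subset\mathcal{L}(H^1_{\Gamma_0}(\Gamma);H^1)$. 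Since $H^1\hookrightarrow H^1(\Omega)\hookrightarrow L^2(\Omega)$, the form $b$ is bounded. Its quadratic part $v\mapsto \tfrac12 b(v,v)$ is therefore $C^\infty$, with differential $\psi\mapsto b(v,\psi)$.

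The potential term is $v\mapsto\int_{\Gamma_1}G(v)$ on $H^1_{\Gamma_0}(\Gamma)$. I will handle it with the argument already used in Lemma~\ref{Lemma 4}. That argument relies on the growth bound \eqref{2.18} for $g$ with $q<\rgamma$ and on the embedding $H^1(\Gamma)\hookrightarrow L^q(\Gamma)$. It shows that $\mathcal{G}$ is $C^1$ with derivative $\psi\mapsto\int_{\Gamma_1}g(v)\psi$. Since $H^1_{\Gamma_0}(\Gamma)$ is a closed subspace of $H^1(\Gamma)$ and its elements vanish on $\Gamma_0$, the restriction is still $C^1$ and has the same formula. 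Adding the two pieces gives $\Phi\in C^1$ and formula \eqref{4.20}.

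Comparing \eqref{4.20} with \eqref{4.18} shows that $d\Phi(v)=0$ exactly when $v$ solves \eqref{4.18}. This is equivalent to \eqref{4.17}, by \eqref{4.15} and the definition \eqref{2.10} of $-\Delta^D_{\Gamma_1}$.

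For the remaining claims, $\Phi(0)=0$ is immediate because $\mathbb{D}0=0$ and $G(0)=0$. Evenness follows from three facts. The map $\mathbb{D}$ is linear, so the quadratic terms are even. The function $g$ is odd by (A6), so $G$ is even.

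To obtain $\Phi=I\circ\mathbb{D}$, fix $v$ and set $u=\mathbb{D}v$. Then $u_{|\Gamma}=v$ and $f(u)=\ell u$ in case B, so $F(u)=\tfrac{\ell}{2}u^2$. Substituting into \eqref{1.18} gives
$$I(u)=\tfrac12\|\nabla u\|_2^2-\tfrac{\ell}{2}\|u\|_2^2+\tfrac12\int_{\Gamma_1}|\nabla_\Gamma v|_\Gamma^2-\int_{\Gamma_1}G(v),$$
which is exactly \eqref{4.19}.

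The only step that needs any care is the $C^1$ regularity of the boundary Nemytskii term on the constrained space $H^1_{\Gamma_0}(\Gamma)$. I expect it to follow directly from \eqref{3.7}, by restricting from $H^1(\Gamma)$ and using \eqref{2.4}, exactly as in the proof of Lemma~\ref{Lemma 4}.
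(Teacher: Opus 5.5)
Your proof is correct, but it is organized differently from the paper's.

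The paper establishes the identity $\Phi=I\cdot\mathbb{D}$ first, using $f(u)=\ell u$ and $(\mathbb{D}v)_{|\Gamma}=v$, and derives the rest from it. Since $I\in C^1(H^1)$ by Lemma~\ref{Lemma 4} and $\mathbb{D}$ is bounded and linear by Lemma~\ref{Lemma 10}, the chain rule gives $\Phi\in C^1(H^1_{\Gamma_0}(\Gamma))$ with $\langle d\Phi(v),\psi\rangle_{H^1_{\Gamma_0}(\Gamma)}=\langle dI(\mathbb{D}v),\mathbb{D}\psi\rangle_{H^1}$. Formula \eqref{3.9} with $f(u)=\ell u$ then turns this into \eqref{4.20}.

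You instead split $\Phi$ into a bounded symmetric quadratic form and the boundary Nemytskii term. You prove differentiability of each piece directly on $H^1_{\Gamma_0}(\Gamma)$, restricting the boundary functional of Lemma~\ref{Lemma 4} with the help of \eqref{2.4}. The identity $\Phi=I\cdot\mathbb{D}$ then appears only as an independent final check.

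Your version is self-contained and makes clear which parts of $\Phi$ are quadratic, but it repeats work already done in Lemma~\ref{Lemma 4}. Since you verify $\Phi=I\cdot\mathbb{D}$ anyway, doing that first would give the $C^1$ regularity and \eqref{4.20} in one line.

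A trivial slip: you announce ``three facts'' for evenness but list only two. Presumably the third is that the Laplace--Beltrami term is quadratic in $v$.
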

\begin{proof}
Trivially we have $\Phi(0)=0$ and, being $g$ odd, $G$ is even with $\Phi$. Moreover, recalling that $f(u)=\ell u$, from \eqref{4.19} and \eqref{1.18} the relation  $\Phi=I\cdot \mathbb{D}$ is evident.
Hence, since $I\in C^1(H^1)$, we also have  $\Phi\in C^1(H^1_{\Gamma_0}(\Gamma))$ and \eqref{4.20} holds true. By \eqref{4.20} and \eqref{4.18}, critical points of $\Phi$ and solutions of \eqref{4.17} coincide.
\end{proof}
The following result shows that the functional $\Phi$ satisfies the  remaining geometrical assumptions of Theorem~\ref{Theorem 9}.
\begin{lem}\label{Lemma 13} Let assumption (A6) hold. Then the functional $\Phi$ satisfies assumption ii) in Theorem~\ref{Theorem 7} and assumption v) in Theorem~\ref{Theorem 9}.
\end{lem}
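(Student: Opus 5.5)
The plan is to transfer to $\Phi$, through the identity $\Phi=I\circ\mathbb{D}$ of Lemma~\ref{Lemma 12}, the estimates already obtained for $I$ in the proof of Lemma~\ref{Lemma 5}. The key ingredient is that on $H^1_{\Gamma_0}(\Gamma)$ the norm $v\mapsto\|\mathbb{D}v\|_{H^1}$ is equivalent to $\|v\|_{H^1(\Gamma)}$. This follows from the bijective isomorphism \eqref{4.14} of Lemma~\ref{Lemma 10}: $\|\mathbb{D}v\|_{H^1}\le c\|v\|_{H^1(\Gamma)}$ and $\|v\|_{H^1(\Gamma)}=\|\Tr\,\mathbb{D}v\|_{H^1(\Gamma)}\le c'\|\mathbb{D}v\|_{H^1}$. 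Hence I may equip $H^1_{\Gamma_0}(\Gamma)$ with the equivalent norm $\|v\|_{\mathbb{D}}:=\|\mathbb{D}v\|_{H^1}$, and both assumptions in the statement are insensitive to passing to an equivalent norm (after adjusting $\rho$ and $R_Y$).

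For assumption ii), I will use estimate \eqref{3.15} with $u=\mathbb{D}v$. In case B) we have $f(u)=\ell u$, so no $\|u\|_p^p$ term is really needed, but \eqref{3.15} holds in general anyway, giving
$$\Phi(v)=I(\mathbb{D}v)\ge \|\mathbb{D}v\|_{H^1}^2\Bigl(\tfrac{\lambda_1-a_0}{2\lambda_1}-c_8c_{13}^p\|\mathbb{D}v\|_{H^1}^{p-2}-c_8c_{13}^q\|\mathbb{D}v\|_{H^1}^{q-2}\Bigr).$$
Choosing $\rho$ as in Lemma~\ref{Lemma 5}, we get $\Phi(v)\ge\eta>0$ whenever $\|v\|_{\mathbb{D}}=\rho$.

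For assumption v), I will start from \eqref{3.18} applied to $u=\mathbb{D}v$, using that in case (A6) $C_g=\min\{C_g^+,C_g^-\}>0$ by \eqref{2.28}, since $\lim_{|u|\to\infty}\xi(u)>0$. Dropping the nonpositive $C_f$ term and using $(\mathbb{D}v)_{|\Gamma}=v$, this gives
$$\Phi(v)\le c_{14}\|v\|_{\mathbb{D}}^2-C_g\|v\|_{q_0,\Gamma_1}^{q_0}.$$
Now let $Y$ be a finite dimensional subspace of $H^1_{\Gamma_0}(\Gamma)$. The crucial point, which is exactly where $I$ itself failed, is that $\|\cdot\|_{q_0,\Gamma_1}$ is a genuine norm on $Y$. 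Indeed, elements of $H^1_{\Gamma_0}(\Gamma)$ vanish on $\Gamma_0$, so $\|v\|_{q_0,\Gamma_1}=\|v\|_{q_0,\Gamma}=0$ forces $v=0$. By equivalence of norms in finite dimension there is $c_Y>0$ with $\|v\|_{\mathbb{D}}\le c_Y\|v\|_{q_0,\Gamma_1}$ on $Y$. Therefore
$$\Phi(v)\le\|v\|_{\mathbb{D}}^2\bigl(c_{14}-C_gc_Y^{-q_0}\|v\|_{\mathbb{D}}^{q_0-2}\bigr)\le0\quad\text{for }\|v\|_{\mathbb{D}}\ge R_Y:=\bigl(c_{14}c_Y^{q_0}/C_g\bigr)^{1/(q_0-2)}.$$

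The main step to handle carefully is the norm equivalence via $\mathbb{D}$, together with the observation that the boundary $L^{q_0}$ norm is nondegenerate on the new space. Everything else is a direct reuse of \eqref{3.15} and \eqref{3.18}.
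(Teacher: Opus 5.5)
Your proposal is correct and is essentially the paper's argument. Both rest on the bounds \eqref{2.25} and \eqref{2.23} of Lemma~\ref{Lemma 2}, the equivalence $\|\mathbb{D}v\|_{H^1}\simeq\|v\|_{H^1(\Gamma)}$ from Lemma~\ref{Lemma 10} (the paper's \eqref{4.22}), $C_g>0$ from \eqref{2.28}, and the equivalence of norms on the finite dimensional $Y$; the only difference is that you import \eqref{3.15} and \eqref{3.18} through $\Phi=I\circ\mathbb{D}$, whereas the paper re-derives the same bounds directly from \eqref{4.19}. One small precision: a bare sphere condition like ii) is not invariant under change of equivalent norm in general, so the transfer works here only because your lower bound controls $\Phi$ on a whole small ball (alternatively, you may simply take $E$ with the equivalent Hilbert norm $\|\mathbb{D}\cdot\|_{H^1}$).
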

\begin{proof} Checking assumption ii) in Theorem~\ref{Theorem 7} involves estimating $\Phi$ from below. By \eqref{4.19} and Lemma~\ref{Lemma 2}, \eqref{2.25}, for all $v\in H^1_{\Gamma_0}(\Gamma)$ we have
$$\Phi(v)\ge\frac 12 \|\mathbb{D}v\|_{H^1}^2-\frac \ell 2\|\mathbb{D}v\|_2^2-\frac{a_0}2\|v\|_{2,\Gamma_1}^2-c_8\|v\|_{q,\Gamma_1}^q.$$
By \eqref{2.30}, \eqref{2.37} and \eqref{4.1} we have $\ell=\ell^+\le a_M<a_0<\lambda_1$, so the previous estimate yields
$$\Phi(v)\ge\frac 12 \|\mathbb{D}v\|_{H^1}^2-\frac{a_0}2\|\mathbb{D}v\|_{H^0}^2-c_8\|v\|_{q,\Gamma_1}^q.$$
Consequently, by \eqref{3.12} and \eqref{3.13bis}, we get
\begin{equation}\label{4.21}
\Phi(v)\ge\frac{\lambda_1-a_0}{2\lambda_1}\|\mathbb{D}v\|_{H^1}^2-c_8c_{12}^q\|\mathbb{D}v\|_{H^1}^q.
\end{equation}
By Lemma~\ref{Lemma 10} there is  a positive constant $c_{19}=c_{19}(\Omega,\Gamma,\Gamma_0)$ such that
\begin{equation}\label{4.22}
 c_{19}^{-1}\|v\|_{H^1(\Gamma)}\le \|\mathbb{D}v\|_{H^1}\le c_{19}\|v\|_{H^1(\Gamma)}\quad\text{for all $v\in H^1_{\Gamma_0}(\Gamma)$.}
\end{equation}
By combining \eqref{4.21} and \eqref{4.22} we get
$$
\Phi(v)\ge\frac{\lambda_1-a_0}{2\lambda_1c_{19}^2}\|v\|_{H^1(\Gamma)}^2-c_8(c_{12} c_{19})^q\|v\|_{H^1(\Gamma)}^q.$$
Hence, setting $\rho=\left[\frac{\lambda_1-a_0}{4\lambda_1c_9c_{19}^{q+2}c_{12}^q}\right]^{1/(q-2)}$, we have  $\Phi(v)\ge \frac{\lambda_1-a_0}{4\lambda_1c_{19}^2}\|v\|_{H^1(\Gamma)}^2$ whenever $\|v\|_{H^1(\Gamma)}\le\rho$. Consequently, also setting $\eta=\frac{\lambda_1-a_0}{4\lambda_1c_{19}^2}\rho^2$ when $\|v\|_{H^1(\Gamma)}=\rho$ and
assumption ii) in Theorem~\ref{Theorem 7} holds true.

To check assumption v) in Theorem~\ref{Theorem 9} we take any finite dimensional subspace $Y$ of $H^1_{\Gamma_0}(\Gamma)$, Since in $Y$ all norms are equivalent, there is a positive constant $c_{20}=c_{20}(q_0,Y)$ such that
\begin{equation}\label{4.23}
c_{20}^{-1}\|v\|_{q_0,\Gamma_1}\le \|v\|_{H^1(\Gamma)}\le c_{20} \|v\|_{q_0,\Gamma_1}\quad\text{for all $v\in Y$. }
\end{equation}
We now point out that, since $\lim\limits_{|u|\to\infty}\xi(u)>0$, by \eqref{2.28} we have $C_g^\pm>0$. Moreover, since $\xi$ is even, we can set $C_g:=C_g^+=C_g^->0$. By \eqref{4.19} and Lemma~\ref{Lemma 2}, \eqref{2.23}, we consequently get the estimate
\begin{equation}\label{4.24}
\begin{aligned}
\Phi(v)&\le\frac 12 \|\mathbb{D}v\|_{H^1}^2-\frac \ell 2\|\mathbb{D}v\|_2^2+c_7\|v\|_{2,\Gamma_1}^2-C_g\|v\|_{q_0,\Gamma_1}^{q_0}\\
&\le c_{21}  \|\mathbb{D}v\|_{H^1}^2-C_g\|v\|_{q_0,\Gamma_1}^{q_0}\quad\text{for all $v\in H^1_{\Gamma_0}(\Gamma)$,}
\end{aligned}
\end{equation}
where $c_{21}=c_{21}(f,\Omega, \Gamma, \Gamma_0)$ is a positive constant. By combining  \eqref{4.22}--\eqref{4.24}
we then get that
$$\Phi(v)\le \|v\|_{q_0,\Gamma_1}^2\left(c_{19}^2c_{20}^2c_{21}-C_g\|v\|_{q_0,\Gamma_1}^{q_0-2}\right)\quad\text{for all $v\in Y$. }$$
Hence $\Phi(v)\le 0$ provided $\|v\|_{q_0,\Gamma_1}\ge \left(c_{21}c_{19}^2c_{20}^2/C_g\right)^{1/(q_0-2)}>0$, which by \eqref{4.23} completes the proof.
\end{proof}
We are now going to check that $\Phi$ verifies the (PS) condition.
\begin{lem}\label{Lemma 14}  Let assumption A(6)  hold. Then $\Phi$ satisfies the (PS) condition.
\end{lem}
\begin{proof}We shall prove the statement by using  Lemma~\ref{Lemma 6}, where the (PS) condition for the functional $I$ was checked. Having this aim, we start by  making some preliminary remarks concerning the space $H^1$ and the functional $I$.
Being $\mathbb{E}^1=\text{Rank }\mathbb{D}$, one easily checks that $\mathbb{E}^1\cap H^1_0(\Omega)=\{0\}$.
Moreover,  $H^1$ admits the decomposition
\begin{equation}\label{4.25}
  H^1=\mathbb{E}^1\oplus H^1_0(\Omega),
\end{equation}
the respective  projectors $\Pi_{\mathbb{E}^1}:H^1\to\mathbb{E}^1$ and $\Pi_{H^1_0(\Omega)}:H^1\to H^1_0(\Omega)$ being given by
$$\Pi_{\mathbb{E}^1}u=\mathbb{D}u_{|\Gamma}\quad\text{and}\quad \Pi_{H^1_0(\Omega)}u=u-\mathbb{D}u_{|\Gamma}\quad\text{for all $u\in H^1$.} $$
Being projectors, see \cite[Chapter~III,~\S5.4,~p.167]{Kato}, the operators $\Pi_{\mathbb{E}^1}$ and $\Pi_{H^1_0(\Omega)}$ are closed, so by the Closed Graph Theorem we have
\begin{equation}\label{4.26}
\Pi_{\mathbb{E}^1}\in\mathcal{L}(H^1,\mathbb{E}^1),\qquad\text{and}\quad \Pi_{H^1_0(\Omega)}\in\mathcal{L}(H^1,H^1_0(\Omega)).
\end{equation}
Using \eqref{4.25}, given  $\phi, u\in \mathbb{E}^1$ and  $\psi\in H^1_0(\Omega)$, we can rewrite
\eqref{3.9} as follows:
$$\begin{aligned}
 \langle dI(u),\phi+\psi\rangle_{H^1}=&(u,\phi+\psi)_{H^1}-\ell\int_{\Omega} u(\phi+\psi)-\int_{\Gamma_1}g(u)(\phi+\psi)\\
 =& (u,\phi)_{H^1}+\int_{\Omega}\nabla u\nabla \psi-\ell\int_{\Omega}u(\phi+\psi)-\int_{\Gamma_1}g(u)\phi\\
 =&(u,\phi)_{H^1}-\ell\int_{\Omega}u\phi-\int_{\Gamma_1}g(u)\phi
 \end{aligned}
$$
were we used \eqref{4.13} to recognize that $\int_{\Omega}\nabla u\nabla \psi-\ell\int_{\Omega}u\psi=0$.

Consequently, we get that
\begin{equation}\label{4.27}
\langle dI(u),\phi+\psi\rangle_{H^1}=\langle dI_{|\mathbb{E}^1}(u),\phi\rangle_{\mathbb{E}^1}\quad\text{for all $\phi, u\in \mathbb{E}^1$ and  $\psi\in H^1_0(\Omega)$,}
\end{equation}
 where $I_{|\mathbb{E}^1}$, denotes the restriction of $I$ to $\mathbb{E}^1$.

To prove the statement let now $(v_n)_n$  be a (PS) sequence for $\Phi$.
Since $\Phi=I\cdot\mathbb{D} =I_{\mathbb{E}^1}\cdot \mathbb{D}$, and by Lemma~\ref{Lemma 10} $\mathbb{D}$ is a bicontinuous isomorphism,  we get that $(\mathbb{D}v_n)_n$ is a (PS) sequence for
the functional $I_{|\mathbb{E}^1}$ in $\mathbb{E}^1$.
Hence $(I(\mathbb{D}v_n))_n$ is bounded in $\mathbb{E}^1$ and
$dI_{|\mathbb{E}^1}(\mathbb{D}v_n)\to 0$ in $[\mathbb{E}^1]'$ as $n\to\infty$.
But, by \eqref{4.27}, for all $u\in H^1$ we have
\begin{align*}
|\langle dI(\mathbb{D}v_n,u\rangle_{H^1}|&=|\langle dI_{|\mathbb{E}^1}(\mathbb{D}v_n),\Pi_{\mathbb{E}^1}u\rangle_{\mathbb{E}^1}|\\
&\le \|dI_{|\mathbb{E}^1}(\mathbb{D}v_n)\|_{[\mathbb{E}^1]'}\|\Pi_{\mathbb{E}^1}u\|_{H^1}.
\end{align*}
Hence, since $\|\Pi_{\mathbb{E}^1}u\|_{H^1}\le \|\Pi_{\mathbb{E}^1}\|_{\mathcal{L}(H^1;\mathbb{E}^1)}\|u\|_{H^1}$ for  all $u\in H^1$,
we get
$$\|dI(\mathbb{D}v_n)\|_{H^1}\le \|\Pi_{\mathbb{E}^1}\|_{\mathcal{L}(H^1;\mathbb{E}^1)}\|dI_{|\mathbb{E}^1}(\mathbb{D}v_n)\|_{[\mathbb{E}^1]'}$$
as $n\to\infty$, so $(\mathbb{D}v_n)_n$ is actually a (PS) sequence for $I$ in $H^1$.
Hence, by  Lemma~\ref{Lemma 6}, up to a subsequence, $(\mathbb{D}(v_n))_n$ strongly converges in $H^1$ as also in $\mathbb{E}^1$. Consequently, by Lemma~\ref{Lemma 10},  $v_n$ strongly converges in $H^1_{\Gamma_0}(\Gamma)$, completing the proof.
\end{proof}
\subsection{Proof of Theorem~\ref{Theorem 3}} \label{section 4.3}
As we discussed ins \S 4.1, we shall separately consider the cases A) and B) in \eqref{alternative}.

In the case A), since $\lim\limits_{|u|\to\infty}\sigma(u)>0$, we can directly apply Theorem~\ref{Theorem 9} to the functional $I$. Indeed, by Lemma~\ref{Lemma 6}, $I$ satisfies the (PS) condition and, by Lemma~\ref{Lemma 5}, it also satisfies the assumptions i) and ii) of Theorem~\ref{Theorem 7} and the assumption (v) of Theorem~\ref{Theorem 9}. By applying it together with Lemma~\ref{Lemma 3}, we then get the existence of a sequence $(u_n)_n$ of weak solutions of \eqref{1.1} such that $I(u_n)\to \infty$. Since $I$ is even the sequence $(-u_n)_n$ is sequence of weak solutions as well.

In the case B), we Lemmas~\ref{Lemma 11}--\ref{Lemma 12} we can try to apply Theorem~\ref{Theorem 9} to the functional $\Phi$. Actually, by Lemmas~\ref{Lemma 13} and \ref{Lemma 14} its assumptions hold, so completing the proof.

\section{Homogeneous and odd nonlinearities} \label{section 5}
This section is devoted to further characterize $d$ and weak solutions of \eqref{1.1} at this level when $(f,g)$ is odd and positively homogeneous, that is when assumption (A5) holds true.

To prove Theorems~\ref{Theorem 4}--\ref{Theorem 6}, which were  stated in \S~\ref{Section 1.3}, we are going to separately consider the three cases (A5.1), (A5.2) and (A5.3).
\subsection{The case $\boldsymbol{f(u)=\gamma |u|^{p-2}u, \,\,\gamma>0, \,\, 2<p<\romega\,\,\text{and}\,\, g\equiv 0}$}
\label{Section 5.1}
In this subsection we shall consider the case in which assumption (A5.1) holds. In this case, by \eqref{1.18} and \eqref{1.20}, we have
\begin{equation}\label{5.1}
  I(u)=\frac 12 \|u\|_{H^1}^2-\frac \gamma p\|u\|_p^p, \quad\text{and}\quad \mathcal{K}(u)=\|u\|_{H^1}^2-\gamma\|u\|_p^p\quad\text{for all $u\in H^1$.}
\end{equation}
We start with the following key preliminary result.
\begin{lem}\label{Lemma 15}
Let assumption (A5.1) holds. Then \eqref{1.29} holds, and for all $u\in H^1\setminus\{0\}$ such that  $I(u)\le d$, the following implications hold true:
\begin{equation}\label{5.2}
\begin{alignedat}3
&\mathcal{K}(u)\ge 0\quad & \Longleftrightarrow & \quad \|u\|_{H^1}\le\omega_1\quad & \Longleftrightarrow& \quad \|u\|_p\le \omega_2, \\
&\mathcal{K}(u)\le 0\quad & \Longleftrightarrow & \quad \|u\|_{H^1}\ge\omega_1\quad & \Longleftrightarrow& \quad \|u\|_p\ge \omega_2.
\end{alignedat}
\end{equation}
\end{lem}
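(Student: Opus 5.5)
We first compute $d$ explicitly. For $u\in H^1\setminus\{0\}$ we have $\theta_u(t)=\frac{t^2}2\|u\|_{H^1}^2-\frac{\gamma t^p}{p}\|u\|_p^p$, and here $\mathcal{S}=H^1_0(\Omega)\cap\{0\}$ is irrelevant since $\sigma(u)=\gamma|u|^{p-2}$ has positive limits, so every $u\ne0$ lies in $H^1_{\mathcal{S}}$ (indeed $\|u\|_p>0$). Maximizing in $t$ gives $\tau_u^{p-2}=\|u\|_{H^1}^2/(\gamma\|u\|_p^p)$ and
\[
\max_{t>0}I(tu)=\left(\tfrac12-\tfrac1p\right)\gamma^{-2/(p-2)}\left(\frac{\|u\|_{H^1}}{\|u\|_p}\right)^{2p/(p-2)}.
\]
Taking the infimum over $u$ and using \eqref{1.26} (the infimum of $\|u\|_{H^1}/\|u\|_p$ is $B_\Omega^{-1}$) yields $d=(\frac12-\frac1p)\gamma^{-2/(p-2)}B_\Omega^{-2p/(p-2)}=(\frac12-\frac1p)\omega_1^2$, and the identity $\omega_1^2=\gamma\omega_2^p$ is a direct check from \eqref{1.28}. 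This proves \eqref{1.29}.

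For \eqref{5.2}, fix $u\ne0$ with $I(u)\le d$, and write $x=\|u\|_{H^1}^2$, $y=\gamma\|u\|_p^p$, so $I(u)=\frac x2-\frac yp$, $\mathcal{K}(u)=x-y$, and the Sobolev inequality \eqref{1.27} reads $y\le \gamma B_\Omega^p x^{p/2}$. If $\mathcal{K}(u)\ge0$ then $y\le x$, so $d\ge I(u)\ge(\frac12-\frac1p)x$, giving $\|u\|_{H^1}\le\omega_1$; and then $\gamma\|u\|_p^p\le x\le\omega_1^2=\gamma\omega_2^p$, giving $\|u\|_p\le\omega_2$. Conversely, if $\mathcal{K}(u)<0$, then $x<y\le\gamma B_\Omega^px^{p/2}$, so $x^{(p-2)/2}>\gamma^{-1}B_\Omega^{-p}$, i.e. $\|u\|_{H^1}>\omega_1$; and $\gamma\|u\|_p^p=y>x>\omega_1^2$ gives $\|u\|_p>\omega_2$. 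Symmetrically, $\mathcal{K}(u)\le0$ gives $x\le y\le\gamma B_\Omega^p x^{p/2}$, hence $\|u\|_{H^1}\ge\omega_1$ and $\|u\|_p\ge\omega_2$; while $\mathcal{K}(u)>0$ gives, via $I(u)\le d$ and $y<x$, the strict bound $\|u\|_{H^1}<\omega_1$ (from $d\ge I(u)>(\frac12-\frac1p)x$) and then $\gamma\|u\|_p^p<\omega_1^2$.

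Collecting: each of the three conditions in a row implies the others, because we have shown $\mathcal{K}\ge0\Rightarrow$ both norm bounds $\le$, $\mathcal{K}<0\Rightarrow$ both norm bounds strict $>$, and analogously for the second row; the contrapositives give the reverse implications. The only delicate point, and the step I expect to need care, is the strict inequality in the case $\mathcal{K}(u)>0$: here $I(u)=\frac x2-\frac yp>\frac x2-\frac xp$ uses $y<x$ strictly together with $x>0$. Everything else is elementary algebra on the two scalars $x,y$ combined with \eqref{1.27}.
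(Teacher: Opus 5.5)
Your proof is correct and follows essentially the same route as the paper: $d$ is computed by explicitly maximizing $I$ along rays and then taking the infimum via \eqref{1.26}, and the equivalences in \eqref{5.2} come from combining the sign of $\mathcal{K}(u)$, the Sobolev inequality \eqref{1.27}, and $I(u)\le d=(\tfrac12-\tfrac1p)\omega_1^2$. The only difference is organizational: you split on the sign of $\mathcal{K}(u)$, including the strict cases, and close the loop with contrapositives, whereas the paper proves the cyclic chains \eqref{5.3} and \eqref{5.4}; both are equally valid.
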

\begin{proof} We shall use Lemma~\ref{Lemma 7}, keeping the notation in it.
 To prove \eqref{1.29} we point out  that, when (A5.1) holds, we have $\mathcal{S}=\{0\}$, and for all $u\in H^1_\mathcal{S}=H^1\setminus\{0\}$  the function $\Upsilon_u$ in \eqref{3.32} is given by
 $$\Upsilon_u(t)=\|u\|_{H^1}^2-\gamma \|u\|_p^p\,t^{p-2}\qquad\text{for all $t\ge 0$.}$$
 hence
$\tau_u=\|u\|_{H^1}^{-\frac 2{p-2}} \gamma^{-\frac 1{p-2}}\|u\|_p^{-\frac p{p-2}}$.
Consequently, by \eqref{5.1},
$$\max_{\lambda>0}I(\lambda u)=I(\tau_uu)=\left(\tfrac 12 -\tfrac 1p\right)\gamma^{-\frac 2{p-2}}\left(\tfrac {\|u\|_{H^1}}{\|u\|_p}\right)^{\frac {2p}{p-2}}.$$
Then, using \eqref{1.26}, \eqref{1.28} and \eqref{3.40bis}, we get
$$d=\left(\tfrac 12 -\tfrac 1p\right)\gamma^{-\frac 2{p-2}}B_\Omega^{\frac {-2p}{p-2}}=\left(\tfrac 12 -\tfrac 1p\right)\omega_1^2
=\left(\tfrac 12 -\tfrac 1p\right)\gamma\omega_2^p,$$
proving \eqref{1.29}.

Now let $u\in H^1\setminus\{0\}$ such that $I(u)\le d$. To prove \eqref{5.2} we shall first prove the following implications:
\begin{equation}\label{5.3}
\mathcal{K}(u)\ge 0\quad  \Longrightarrow  \quad \|u\|_{H^1}\le\omega_1\quad  \Longrightarrow \quad \|u\|_p\le \omega_2 \quad  \Longrightarrow \quad \mathcal{K}(u)\ge 0.
\end{equation}
When $\mathcal{K}(u)\ge 0$, by \eqref{5.1} we have $\gamma\|u\|_p^p\le \|u\|_{H^1}^2$, and consequently
$$\left(\tfrac 12 -\tfrac 1p\right)\|u\|_{H^1}^2\le \tfrac 12\|u\|_{H^1}^2-\tfrac \gamma p\|u\|_p^p=
I(u)\le d= \left(\tfrac 12 -\tfrac 1p\right)\omega_1^2,$$
which implies that $\|u\|_{H^1}\le \omega_1$.
When $\|u\|_{H^1}\le \omega_1$, by \eqref{1.27} and \eqref{1.28} we get
$\|u\|_p\le B_\Omega \|u\|_{H^1}\le B_\Omega\,\omega_1=\omega_2.$
When $\|u\|_p\le \omega_2$, by \eqref{1.27} and \eqref{1.28} we obtain
$\gamma\|u\|_p^p\le \gamma\omega_2^{p-2}\|u\|_p^2=B_\Omega^{-2}\|u\|_p^2\le \|u\|_{H^1}^2$,
so $\mathcal{K}(u)\ge 0$, concluding the proof of \eqref{5.3}.

To complete the  proof of \eqref{5.2} we are then going to prove the further implications
\begin{equation}\label{5.4}
\mathcal{K}(u)\le 0\quad  \Longrightarrow  \quad \|u\|_p\ge \omega_2\quad  \Longrightarrow \quad \|u\|_{H^1}\ge\omega_1 \quad  \Longrightarrow \quad \mathcal{K}(u)\le 0.
\end{equation}
When $\mathcal{K}(u)\le 0$, by \eqref{5.1}, we have $\gamma \|u\|_p^p\ge \|u\|_{H^1}^2$, so by \eqref{1.27} we obtain
$B_\Omega^{-2}\|u\|_p^2\le \|u\|_{H^1}^2\le \gamma\|u\|_p^p$,
and consequently $\|u\|_p^{p-2}\ge \gamma^{-1} B_\Omega^{-2}=\omega_2^{p-2}$, that is $\|u\|_p\ge \omega_2$.
When $\|u\|_p\ge \omega_2$,  by \eqref{1.27} we get $\|u\|_{H^1}\ge \omega_2B_\Omega^{-1}=\omega_1$.
When $\|u\|_{H^1}\ge\omega_1$, assuming by contradiction that
$\mathcal{K}(u)>0$,  we have $\gamma\|u\|_p^p<\|u\|_{H^1}^2$. Hence, by \eqref{5.1}
$$I(u)=\tfrac 12  \|u\|_{H^1}^2-\tfrac 1p \|u\|_p^p>\left(\tfrac 12 -\tfrac 1p\right)\|u\|_{H^1}^2
\ge \left(\tfrac 12 -\tfrac 1p\right)\omega_1^2=d,$$
the desired contradiction. Hence we have  $\mathcal{K}(u)\le 0$, completing the proof of \eqref{5.4} and consequently also of \eqref{5.2}.
\end{proof}
\begin{proof}[\bf Proof of Theorem~\ref{Theorem 4}]
By Lemma~\ref{Lemma 15}, \eqref{1.29} holds and, for all $u\in H^1\setminus\{0\}$ with $I(u)\le d$, the following implications hold:
\begin{equation}\label{5.5}
\mathcal{K}(u)= 0\quad  \Longleftrightarrow  \quad \|u\|_{H^1}=\omega_1\quad  \Longleftrightarrow \quad \|u\|_p= \omega_2.
\end{equation}
Hence, by Theorem~\ref{Theorem 2}, any lowest energy nontrivial weak solution of \eqref{1.1} satisfies \eqref{1.30} and consequently, since $\omega_2=B_\Omega\omega_1$, it is a solution of the maximization problem \eqref{1.31}.

Conversely, let $u\in H^1\setminus\{0\}$ be a solution of \eqref{1.31}, and let us fix $\tau=\omega_1/\|u\|_{H^1}>0$ and $v=\tau u$. We then have $\|v\|_{H^1}=\omega_1$ and, by \eqref{1.28}, $\|v\|_p=B_\Omega \|v\|_{H^1}=B_\Omega\omega_1=\omega_2$. Moreover, by \eqref{5.1} and \eqref{1.28},
$$I(v)=\tfrac 12 \|v\|_{H^1}^2-\tfrac \gamma p\|u\|_p^p=\tfrac 12 \omega_1^2-\tfrac \gamma p\omega_2^p=
\left(\tfrac 12 -\tfrac 1p\right)\gamma^{-\frac 2{p-2}}B_\Omega^{-\frac{2p}{p-2}}=\left(\tfrac 12 -\tfrac 1p\right)\omega_1^2=d.$$
Hence, by Theorem~\ref{Theorem 2}, as \eqref{1.23} holds true, $v$ is a lowest energy nontrivial weak solution of \eqref{1.1}
\end{proof}

\subsection{The case $\boldsymbol{f\equiv 0\,\,\text{and}\,\, g(u)=\delta |u|^{q-2}u, \,\,\delta>0, \,\, 2<q<\rgamma}$}
\label{Section 5.2}
In this subsection we shall assume that (A5.2) holds, and consequently, for all $u\in H^1$,
\begin{equation}\label{5.6}
  I(u)=\frac 12 \|u\|_{H^1}^2-\frac \delta q\|u\|_{q,\Gamma_1}^q, \quad\text{and}\quad \mathcal{K}(u)=\|u\|_{H^1}^2-\delta\|u\|_{q,\Gamma_1}^q
\end{equation}
The analogous of Lemma~\ref{Lemma 15} in this case is the following one.
\begin{lem}\label{Lemma 16}
Let assumption (A5.2) holds. Then \eqref{1.35} holds, and for all $u\in H^1$ such that $u_{|\Gamma}\not\equiv 0$ and $I(u)\le d$, the following implications hold true:
\begin{equation}\label{5.7}
\begin{alignedat}3
&\mathcal{K}(u)\ge 0\quad & \Longleftrightarrow & \quad \|u\|_{H^1}\le\kappa_1\quad & \Longleftrightarrow& \quad \|u\|_{q,\Gamma_1}\le \kappa_2, \\
&\mathcal{K}(u)\le 0\quad & \Longleftrightarrow & \quad \|u\|_{H^1}\ge\kappa_1\quad & \Longleftrightarrow& \quad \|u\|_{q,\Gamma_1}\ge \kappa_2.
\end{alignedat}
\end{equation}
\end{lem}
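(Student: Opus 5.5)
The plan is to transpose the proof of Lemma~\ref{Lemma 15} verbatim, with $\|\cdot\|_p$, $B_\Omega$, $\gamma$, $\omega_{1,2}$ replaced by $\|\cdot\|_{q,\Gamma_1}$, $B_\Gamma$, $\delta$, $\kappa_{1,2}$. The only structural difference is the cone $\mathcal{S}$ of \eqref{3.29}. Under (A5.2) we have $\sigma\equiv0$, so $\mathcal{S}=H^1_0(\Omega)$ and $H^1_{\mathcal{S}}=\{u\in H^1: u_{|\Gamma}\not\equiv 0\}$. This is exactly why the statement requires $u_{|\Gamma}\not\equiv0$.

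\textbf{Step 1: the formula \eqref{1.35}.} For $u\in H^1_{\mathcal{S}}$, formula \eqref{3.32} with $A\equiv0$, $\sigma\equiv0$, $\xi(v)=\delta|v|^{q-2}$ gives
$$\Upsilon_u(t)=\|u\|_{H^1}^2-\delta\|u\|_{q,\Gamma_1}^q\,t^{q-2}.$$
Hence $\tau_u=\bigl(\|u\|_{H^1}^2/(\delta\|u\|_{q,\Gamma_1}^q)\bigr)^{1/(q-2)}$. Substituting into \eqref{5.6} yields
$$\max_{t>0}I(tu)=\left(\tfrac12-\tfrac1q\right)\delta^{-\frac2{q-2}}\left(\frac{\|u\|_{H^1}}{\|u\|_{q,\Gamma_1}}\right)^{\frac{2q}{q-2}}.$$
By \eqref{3.40bis}, $d$ is the infimum of this quantity over $H^1_{\mathcal{S}}$. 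Since the quotient $\|u\|_{q,\Gamma_1}/\|u\|_{H^1}$ vanishes on $\mathcal{S}$, its supremum over $H^1_{\mathcal{S}}$ equals $B_\Gamma$ as defined in \eqref{1.32}. Therefore $d=(\frac12-\frac1q)\delta^{-2/(q-2)}B_\Gamma^{-2q/(q-2)}$. Using \eqref{1.34}, this equals $(\frac12-\frac1q)\kappa_1^2=(\frac12-\frac1q)\delta\kappa_2^q$.

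\textbf{Step 2: the chains of equivalences.} Fix $u$ with $u_{|\Gamma}\not\equiv0$ and $I(u)\le d$. I will prove the two cyclic chains analogous to \eqref{5.3} and \eqref{5.4}.

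For the first chain:
\begin{itemize}
\item[(a)] If $\mathcal{K}(u)\ge0$, then $\delta\|u\|_{q,\Gamma_1}^q\le\|u\|_{H^1}^2$. This gives $(\frac12-\frac1q)\|u\|_{H^1}^2\le I(u)\le d$, hence $\|u\|_{H^1}\le\kappa_1$.
\item[(b)] If $\|u\|_{H^1}\le\kappa_1$, then \eqref{1.33} gives $\|u\|_{q,\Gamma_1}\le B_\Gamma\kappa_1=\kappa_2$.
\item[(c)] If $\|u\|_{q,\Gamma_1}\le\kappa_2$, then $\delta\|u\|_{q,\Gamma_1}^q\le\delta\kappa_2^{q-2}\|u\|_{q,\Gamma_1}^2=B_\Gamma^{-2}\|u\|_{q,\Gamma_1}^2\le\|u\|_{H^1}^2$, so $\mathcal{K}(u)\ge0$.
\end{itemize}

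For the reverse chain:
\begin{itemize}
\item[(a)] If $\mathcal{K}(u)\le0$, then $B_\Gamma^{-2}\|u\|_{q,\Gamma_1}^2\le\|u\|_{H^1}^2\le\delta\|u\|_{q,\Gamma_1}^q$. Dividing by $\|u\|_{q,\Gamma_1}^2\neq0$, which is allowed precisely because $u_{|\Gamma}\not\equiv0$, gives $\|u\|_{q,\Gamma_1}\ge\kappa_2$.
\item[(b)] If $\|u\|_{q,\Gamma_1}\ge\kappa_2$, then \eqref{1.33} gives $\|u\|_{H^1}\ge\kappa_1$.
\item[(c)] If $\|u\|_{H^1}\ge\kappa_1$ and, by contradiction, $\mathcal{K}(u)>0$, then $I(u)>(\frac12-\frac1q)\|u\|_{H^1}^2\ge d$, which is impossible.
\end{itemize}

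\textbf{Main obstacle.} The only delicate point is Step~1: one must make sure the infimum in \eqref{3.40bis} is taken over $H^1_{\mathcal{S}}$ rather than $H^1\setminus\{0\}$. One must also check that this restriction does not change the supremum defining $B_\Gamma$, which is trivial because the quotient vanishes on $\mathcal{S}$. Everything else is routine algebra.
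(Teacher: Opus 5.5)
Your proposal is correct and follows the paper's proof essentially line by line: it computes $\Upsilon_u$ and $\tau_u$ via Lemma~\ref{Lemma 7} with $\mathcal{S}=H^1_0(\Omega)$, obtains the formula for $d$ through \eqref{3.40bis} and \eqref{1.32}, and proves the two cyclic chains of implications \eqref{5.8}--\eqref{5.9}. Two of your remarks only make explicit what the paper leaves implicit: that the quotient $\|u\|_{q,\Gamma_1}/\|u\|_{H^1}$ vanishes on $\mathcal{S}$, so restricting to $H^1_{\mathcal{S}}$ does not change $B_\Gamma$, and that $u_{|\Gamma}\not\equiv 0$ is what allows dividing by $\|u\|_{q,\Gamma_1}^2$.
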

\begin{proof} Also in this case we shall use Lemma~\ref{Lemma 7}, keeping the notation.
 To prove \eqref{1.35} we point out  that, when (A5.2) holds, we have $\mathcal{S}=H^1_0(\Omega)$ and,  for any $u\in H^1_\mathcal{S}$, i.e. $u\in H^1$ with $u_{|\Gamma}\not\equiv 0$, the function $\Upsilon_u$ in \eqref{3.32} is given by
 $$\Upsilon_u(t)=\|u\|_{H^1}^2-\delta \|u\|_{q,\Gamma_1}^q\,t^{q-2}\qquad\text{for all $t\ge 0$,}$$
 so
 $\tau_u=\|u\|_{H^1}^{\frac 2{q-2}}\delta^{-\frac 1{q-2}}\|u\|_q^{-\frac q{q-2}}$.
Consequently, by \eqref{5.6},
$$\max_{\lambda>0}I(\lambda u)=I(\tau_uu)=\left(\tfrac 12 -\tfrac 1q\right)\delta^{-\frac 2{q-2}}\left(\tfrac {\|u\|_{H^1}}{\|u\|_{q,\Gamma_1}}\right)^{\frac {2q}{q-2}}.$$
Then, by \eqref{1.32}, \eqref{1.34} and \eqref{3.40bis}, we get
$$d=\left(\tfrac 12 -\tfrac 1q\right)\delta^{-\frac 2{q-2}}B_\Gamma^{\frac {-2q}{q-2}}=\left(\tfrac 12 -\tfrac 1q\right)\kappa_1^2
=\left(\tfrac 12 -\tfrac 1q\right)\delta\kappa_2^p,$$
proving \eqref{1.35}.

Now let $u\in H^1$ such that $u_{|\Gamma}\not\equiv 0$ and $I(u)\le d$. To prove \eqref{5.7} we shall, at  first, prove the following implications:
\begin{equation}\label{5.8}
\mathcal{K}(u)\ge 0\quad  \Longrightarrow  \quad \|u\|_{H^1}\le\kappa_1\quad  \Longrightarrow \quad \|u\|_{q,\Gamma_1}\le \kappa_2 \quad  \Longrightarrow \quad \mathcal{K}(u)\ge 0.
\end{equation}
When $\mathcal{K}(u)\ge 0$, by \eqref{5.6} we have $\delta\|u\|_{q,\Gamma_1}^q\le \|u\|_{H^1}^2$, and consequently
$$\left(\tfrac 12 -\tfrac 1q\right)\|u\|_{H^1}^2\le \tfrac 12\|u\|_{H^1}^2-\tfrac \delta q\|u\|_{q,\Gamma_1}^q=
I(u)\le d= \left(\tfrac 12 -\tfrac 1q\right)\kappa_1^2,$$
which implies that $\|u\|_{H^1}\le \kappa_1$.
When $\|u\|_{H^1}\le \kappa_1$, by \eqref{1.33} and \eqref{1.34} we get
$\|u\|_{q,\Gamma_1}\le B_\Gamma \|u\|_{H^1}\le B_\Gamma\,\kappa_1=\kappa_2$.
When $\|u\|_{q,\Gamma_1}\le \kappa_2$, by \eqref{1.33} and \eqref{1.34} we obtain
$\delta\|u\|_{q,\Gamma_1}^q\le \delta\kappa_2^{q-2}\|u\|_{q,\Gamma_1}^2=B_\Gamma^{-2}\|u\|_q^2\le \|u\|_{H^1}^2$,
so $\mathcal{K}(u)\ge 0$, concluding the proof of \eqref{5.8}.

To complete the  proof of \eqref{5.7} we are then going to prove the further implications
\begin{equation}\label{5.9}
\mathcal{K}(u)\le 0\quad  \Longrightarrow  \quad \|u\|_{q,\Gamma_1}\ge \kappa_2\quad  \Longrightarrow \quad \|u\|_{H^1}\ge\kappa_1 \quad  \Longrightarrow \quad \mathcal{K}(u)\le 0.
\end{equation}
When $\mathcal{K}(u)\le 0$, by \eqref{5.6}, we have $\delta\|u\|_{q,\Gamma_1}^q\ge \|u\|_{H^1}^2$, so by \eqref{1.33} we obtain
$B_\Gamma^{-2}\|u\|_{q,\Gamma_1}^2\le \|u\|_{H^1}^2\le \delta\|u\|_{q,\Gamma_1}^q$,
and consequently $\|u\|_{q,\Gamma_1}^{q-2}\ge \delta^{-1} B_\Gamma^{-2}=\kappa_2^{q-2}$, that is $\|u\|_{q,\Gamma_1}\ge \kappa_2$.
If $\|u\|_{q,\Gamma_1}\ge \kappa_2$,  by \eqref{1.33} we get $\|u\|_{H^1}\ge \kappa_2B_\Gamma^{-1}=\kappa_1$.
When $\|u\|_{H^1}\ge\kappa_1$, assuming by contradiction that
$\mathcal{K}(u)>0$,  we have $\delta\|u\|_{q,\Gamma_1}^q<\|u\|_{H^1}^2$. Hence, by \eqref{5.6},
$$I(u)=\tfrac 12  \|u\|_{H^1}^2-\tfrac 1q \|u\|_{q,\Gamma_1}^q>\left(\tfrac 12 -\tfrac 1q\right)\|u\|_{H^1}^2
\ge \left(\tfrac 12 -\tfrac 1q\right)\kappa_1^2=d,$$
the required contradiction. Hence we have  $\mathcal{K}(u)\le 0$, completing the proof of \eqref{5.9} and consequently also of \eqref{5.7}.
\end{proof}
\begin{proof}[\bf Proof of Theorem~\ref{Theorem 5}]
By Lemma~\ref{Lemma 16}, \eqref{1.35} holds and, for all $u\in H^1$ such that $u_{|\Gamma}\not\equiv 0$ and $I(u)\le d$, the following implications hold:
\begin{equation}\label{5.10}
\mathcal{K}(u)= 0\quad  \Longleftrightarrow  \quad \|u\|_{H^1}=\kappa_1\quad  \Longleftrightarrow \quad \|u\|_{q,\Gamma_1}= \kappa_2.
\end{equation}
Hence, by Theorem~\ref{Theorem 2}, any lowest energy nontrivial weak solution of \eqref{1.1} satisfies \eqref{1.36} and consequently, since $\kappa_2=B_\Gamma\kappa_1$, it is a solution of the maximization problem \eqref{1.37}.

Conversely, let $u\in H^1\setminus\{0\}$ be a solution of \eqref{1.37}. Since the maximum value does not vanish, we have $u_{|\Gamma}\not\equiv 0$. Let us fix $\tau=\kappa_1/\|u\|_{H^1}>0$ and $v=\tau u$. Then we have $\|v\|_{H^1}=\kappa_1$ and $\|v\|_{q,\Gamma_1}=B_\Gamma \|v\|_{H^1}=B_\Gamma\kappa_1=\kappa_2$. Moreover, by \eqref{5.5} and \eqref{1.34},
$$I(v)=\tfrac 12 \|v\|_{H^1}^2-\tfrac \delta q\|u\|_{q,\Gamma_1}^q=\tfrac 12 \kappa_1^2-\tfrac \delta q\kappa_2^2=
\left(\tfrac 12 -\tfrac 1q\right)\delta^{-\frac 2{q-2}}B_\Gamma^{-\frac{2q}{q-2}}=\left(\tfrac 12 -\tfrac 1q\right)\kappa_1^2=d.$$
Hence, by Theorem~\ref{Theorem 2}, as \eqref{1.23} holds true, $v$ is a lowest energy nontrivial weak solution of \eqref{1.1}
\end{proof}
\subsection{The case $\boldsymbol{f(u)=\gamma |u|^{p-2}u, \,\,}$ and $\boldsymbol{g(u)=\delta |u|^{p-2}u,\,\,\gamma,\delta>0, \,\, 2<p<\romega}$}
\label{Section 5.3}
In this subsection we shall consider the case in which assumption (A5.3) holds, hence we have
\begin{equation}\label{5.10bis}
  I(u)=\frac 12 \|u\|_{H^1}^2-\frac \gamma p\|(u,u_{|\Gamma})\|_{X_p}^p, \,\, \mathcal{K}(u)=\|u\|_{H^1}^2-\|(u,u_{|\Gamma})\|_{X_p}^p
\end{equation}
for all $u\in H^1$. In this case our preliminary result takes the following form.
\begin{lem}\label{Lemma 17}
Let assumption (A5.3) holds. Then \eqref{1.42} holds, and for all $u\in H^1\setminus\{0\}$ such that  $I(u)\le d$, the following implications hold true:
\begin{equation}\label{5.11}
\begin{alignedat}3
&\mathcal{K}(u)\ge 0\quad & \Longleftrightarrow & \quad \|u\|_{H^1}\le\zeta_1\quad & \Longleftrightarrow& \quad \|(u,u_{|\Gamma})\|_{X_p}\le \zeta_2, \\
&\mathcal{K}(u)\le 0\quad & \Longleftrightarrow & \quad \|u\|_{H^1}\ge\zeta_1\quad & \Longleftrightarrow& \quad \|(u,u_{|\Gamma})\|_{X_p}\ge \zeta_2.
\end{alignedat}
\end{equation}
\end{lem}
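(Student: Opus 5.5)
The argument will closely follow the proofs of Lemmas~\ref{Lemma 15} and \ref{Lemma 16}, with the norm $\|(u,u_{|\Gamma})\|_{X_p}$ taking the role of $\|u\|_p$ or $\|u\|_{q,\Gamma_1}$. One observation comes first. By \eqref{1.38}, $\int_\Omega F(u)+\int_{\Gamma_1}G(u)=\frac1p\|(u,u_{|\Gamma})\|_{X_p}^p$. Hence the correct form of \eqref{5.10bis} is
$$I(u)=\tfrac12\|u\|_{H^1}^2-\tfrac1p\|(u,u_{|\Gamma})\|_{X_p}^p,$$
without the factor $\gamma$, and I will work with this expression. In this case $\mathcal{S}=\{0\}$: indeed $\sigma(u)=\gamma|u|^{p-2}$ has positive limits at $\pm\infty$, and $\|(u,u_{|\Gamma})\|_{X_p}>0$ for every $u\ne0$.

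To prove \eqref{1.42}, I apply Lemma~\ref{Lemma 7}. For $u\in H^1\setminus\{0\}$ one has $\Upsilon_u(t)=\|u\|_{H^1}^2-\|(u,u_{|\Gamma})\|_{X_p}^p\,t^{p-2}$. This gives $\tau_u=\bigl(\|u\|_{H^1}^2/\|(u,u_{|\Gamma})\|_{X_p}^p\bigr)^{1/(p-2)}$ and
$$\max_{t>0}I(tu)=I(\tau_uu)=\left(\tfrac12-\tfrac1p\right)\left(\frac{\|u\|_{H^1}}{\|(u,u_{|\Gamma})\|_{X_p}}\right)^{\frac{2p}{p-2}}.$$
Taking the infimum over $u$ via \eqref{3.40bis} and \eqref{1.39} yields $d=(\frac12-\frac1p)B_p^{-2p/(p-2)}=(\frac12-\frac1p)\zeta_1^2$. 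Since $\zeta_2^p=B_p^{-2p/(p-2)}=\zeta_1^2$ by \eqref{1.41}, the second expression for $d$ follows as well.

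For \eqref{5.11}, fix $u\ne0$ with $I(u)\le d$ and prove two cyclic chains of implications, as in \eqref{5.3} and \eqref{5.4}. For the first chain:
\begin{itemize}
\item[(a)] If $\mathcal{K}(u)\ge0$, then $\|(u,u_{|\Gamma})\|_{X_p}^p\le\|u\|_{H^1}^2$, so $(\frac12-\frac1p)\|u\|_{H^1}^2\le I(u)\le d$, which gives $\|u\|_{H^1}\le\zeta_1$.
\item[(b)] If $\|u\|_{H^1}\le\zeta_1$, then \eqref{1.40} gives $\|(u,u_{|\Gamma})\|_{X_p}\le B_p\zeta_1=\zeta_2$.
\item[(c)] If $\|(u,u_{|\Gamma})\|_{X_p}\le\zeta_2$, then $\|(u,u_{|\Gamma})\|_{X_p}^p\le\zeta_2^{p-2}\|(u,u_{|\Gamma})\|_{X_p}^2=B_p^{-2}\|(u,u_{|\Gamma})\|_{X_p}^2\le\|u\|_{H^1}^2$, so $\mathcal{K}(u)\ge0$.
\end{itemize}
For the reverse chain:
\begin{itemize}
\item[(d)] If $\mathcal{K}(u)\le0$, then $B_p^{-2}\|(u,u_{|\Gamma})\|_{X_p}^2\le\|u\|_{H^1}^2\le\|(u,u_{|\Gamma})\|_{X_p}^p$, so $\|(u,u_{|\Gamma})\|_{X_p}^{p-2}\ge B_p^{-2}=\zeta_2^{p-2}$.
\item[(e)] If $\|(u,u_{|\Gamma})\|_{X_p}\ge\zeta_2$, then \eqref{1.40} gives $\|u\|_{H^1}\ge\zeta_2/B_p=\zeta_1$.
\item[(f)] If $\|u\|_{H^1}\ge\zeta_1$, suppose $\mathcal{K}(u)>0$. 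Then $I(u)>(\frac12-\frac1p)\|u\|_{H^1}^2\ge d$, a contradiction.
\end{itemize}

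No step is genuinely hard. The only care needed is in the algebra identifying $\zeta_1,\zeta_2$ with the constants $\tau_u$, $B_p$, i.e. checking $\zeta_2^{p-2}=B_p^{-2}$ and $\zeta_1^2=\zeta_2^p$, and in using the normalization of $I$ without the spurious $\gamma$.
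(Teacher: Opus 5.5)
Your proposal is correct and follows the paper's proof essentially step by step. It uses Lemma~\ref{Lemma 7} with $\mathcal{S}=\{0\}$ and the explicit $\Upsilon_u$ and $\tau_u$ to get $d=\left(\tfrac12-\tfrac1p\right)B_p^{-2p/(p-2)}$, then the same two cyclic chains of implications, and you are right that the factor $\gamma$ in \eqref{5.10bis} is a misprint (the paper's own computations in this proof use $I(u)=\tfrac12\|u\|_{H^1}^2-\tfrac1p\|(u,u_{|\Gamma})\|_{X_p}^p$).
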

\begin{proof} Also in this case using Lemma~\ref{Lemma 7}, we point out that when
 (A5.3) holds, we have $\mathcal{S}=\{0\}$ and  for all $u\in H^1_\mathcal{S}=H^1\setminus\{0\}$  the function $\Upsilon_u$ in \eqref{3.32} is given by
 $$\Upsilon_u(t)=\|u\|_{H^1}^2-\|(u,u_{|\Gamma})\|_{X_p}^p\,t^{p-2}\qquad\text{for all $t\ge 0$,}$$
 so $\tau_u=\|u\|_{H^1}^{\frac 2{p-2}}\|(u,u_{|\Gamma})\|_{X_p}^{-\frac p{p-2}}$. Consequently, by \eqref{5.10bis},
$$\max_{\lambda>0}I(\lambda u)=I(\tau_uu)=\left(\tfrac 12 -\tfrac 1p\right)\left(\|u\|_{H^1}/\|(u,u_{|\Gamma})\|_{X_p}\right)^{\frac {2p}{p-2}}.$$
Then, using \eqref{1.39} and  \eqref{1.41} we get
$d=\left(\tfrac 12 -\tfrac 1p\right)B_p^{\frac {-2p}{p-2}}=\left(\frac 12 -\frac 1p\right)\zeta_1^2
=\left(\frac 12 -\frac 1p\right)\zeta_2^p$, so proving \eqref{1.42}.

Now let $u\in H^1\setminus\{0\}$ such that $I(u)\le d$. To prove \eqref{5.11} we shall first prove the following implications:
\begin{equation}\label{5.12}
\mathcal{K}(u)\ge 0\,\,  \Longrightarrow  \,\, \|u\|_{H^1}\le\zeta_1\,\,  \Longrightarrow \,\, \|(u,u_{|\Gamma})\|_{X_p}\le \zeta_2 \,\,  \Longrightarrow \quad \mathcal{K}(u)\ge 0.
\end{equation}
When $\mathcal{K}(u)\ge 0$, by \eqref{5.10bis} we have $\|(u,u_{|\Gamma})\|_{X_p}^p\le \|u\|_{H^1}^2$, and consequently
$$\left(\tfrac 12 -\tfrac 1p\right)\|u\|_{H^1}^2\le \tfrac 12\|u\|_{H^1}^2-\tfrac 1 p\|(u,u_{|\Gamma})\|_{X_p}^p=
I(u)\le d= \left(\tfrac 12 -\tfrac 1p\right)\zeta_1^2,$$
so $\|u\|_{H^1}\le \zeta_1$.

When $\|u\|_{H^1}\le \zeta_1$, by \eqref{1.40} and \eqref{1.41} we get
$\|u\|_{X_p}\le B_p \|u\|_{H^1}\le B_p\,\zeta_1=\zeta_2$.
When $\|(u,u_{|\Gamma})\|_{X_p}\le \zeta_2$, by \eqref{1.40} and \eqref{1.41} we obtain
$\|(u,u_{|\Gamma})\|_{X_p}^p\le \zeta_2^{p-2}\|(u,u_{|\Gamma})\|_{X_p}^2=B_p^{-2}\|(u,u_{|\Gamma})\|_{X_p}^2\le \|u\|_{H^1}^2$,
so $\mathcal{K}(u)\ge 0$, concluding the proof of \eqref{5.12}.

To complete the  proof of \eqref{5.11} we are then going to prove the further implications
\begin{equation}\label{5.14}
\mathcal{K}(u)\le 0\,\,  \Longrightarrow  \,\, \|(u,u_{|\Gamma})\|_{X_p}\ge \zeta_2\,\,  \Longrightarrow \quad \|u\|_{H^1}\ge\zeta_1 \,\,  \Longrightarrow \quad \mathcal{K}(u)\le 0.
\end{equation}
When $\mathcal{K}(u)\le 0$, by \eqref{5.10bis}, we have $\|(u,u_{|\Gamma})\|_{X_p}^p\ge \|u\|_{H^1}^2$, so by \eqref{1.40} we obtain
$B_p^{-2}\|(u,u_{|\Gamma})\|_{X_p}^2\le \|u\|_{H^1}^2\le \|(u,u_{|\Gamma})\|_{X_p}^p$,
and consequently $\|(u,u_{|\Gamma})\|_{X_p}^{p-2}\ge B_p^{-2}=\zeta_2^{p-2}$, that is $\|(u,u_{|\Gamma})\|_{X_p}\ge \zeta_2$.
When $\|(u,u_{|\Gamma})\|_{X_p}\ge \zeta_2$,  by \eqref{1.40} we get $\|u\|_{H^1}\ge \zeta_2B_p^{-1}=\zeta_1$.
If $\|u\|_{H^1}\ge\zeta_1$, we assume by contradiction that
$\mathcal{K}(u)>0$. We have $\|(u,u_{|\Gamma})\|_{X_p}^p<\|u\|_{H^1}^2$. Hence, by \eqref{5.10bis}
$$I(u)=\frac 12  \|u\|_{H^1}^2-\frac 1p \|(u,u_{|\Gamma})\|_{X_p}^p>\left(\tfrac 12 -\tfrac 1p\right)\|u\|_{H^1}^2
\ge \left(\tfrac 12 -\tfrac 1p\right)\zeta_1^2=d,$$
the required contradiction. Hence we have  $\mathcal{K}(u)\le 0$, completing the proof of \eqref{5.14} and \eqref{5.11}.
\end{proof}
\begin{proof}[\bf Proof of Theorem~\ref{Theorem 6}]
By Lemma~\ref{Lemma 17}, \eqref{1.42} holds and, for all $u\in H^1\setminus\{0\}$ with $I(u)\le d$, the following implications hold:
\begin{equation}\label{5.14}
\mathcal{K}(u)= 0\quad  \Longleftrightarrow  \quad \|u\|_{H^1}=\zeta_1\quad  \Longleftrightarrow \quad \|(u,u_{|\Gamma})\|_{X_p}= \zeta_2.
\end{equation}
Hence, by Theorem~\ref{Theorem 2}, any lowest energy nontrivial weak solution of \eqref{1.1} satisfies \eqref{1.43} and consequently, since $\zeta_2=B_p\zeta_1$, it is a solution of the maximization problem \eqref{1.44}.

Conversely, let $u\in H^1\setminus\{0\}$ be a solution of \eqref{1.44}, and let us fix $\tau=\zeta_1/\|u\|_{H^1}>0$ and $v=\tau u$. We then have $\|v\|_{H^1}=\zeta_1$ and, by \eqref{1.41}, $\|(u,u_{|\Gamma})\|_{X_p}=B_p \zeta_1=\zeta_2$. Moreover, by \eqref{5.10bis} and \eqref{1.41},
$$I(v)=\tfrac 12 \|v\|_{H^1}^2-\tfrac 1 p\|(u,u_{|\Gamma})\|_{X_p}^p=\tfrac 12 \zeta_1^2-\tfrac 1 p\zeta_2^p=
\left(\tfrac 12 -\tfrac 1p\right)B_p^{-\frac{2p}{p-2}}=\left(\tfrac 12 -\tfrac 1p\right)\zeta_1^2=d.$$
Hence, by Theorem~\ref{Theorem 2}, as \eqref{1.23} holds true, $v$ is a lowest energy nontrivial weak solution of \eqref{1.1}
\end{proof}

\def\cprime{$'$}
\providecommand{\bysame}{\leavevmode\hbox to3em{\hrulefill}\thinspace}
\providecommand{\MR}{\relax\ifhmode\unskip\space\fi MR }
\providecommand{\MRhref}[2]{%
  \href{http://www.ams.org/mathscinet-getitem?mr=#1}{#2}
}
\providecommand{\href}[2]{#2}

\end{document}